\documentclass{elsarticle}
\usepackage[T1]{fontenc}
	\usepackage[full]{textcomp}
	\usepackage{lmodern}
\usepackage{anyfontsize}
\usepackage{mathtools}
\usepackage{amsthm}
\usepackage{mathrsfs}
\usepackage{amssymb}
\usepackage{bm}
\usepackage{caption}
\usepackage{subcaption}
\usepackage[margin=2.5cm]{geometry}
\usepackage{color}
\usepackage{graphicx}
\usepackage{algpseudocode}
\usepackage{float}
\usepackage{url}
\usepackage{enumitem}
\usepackage{amsopn}
\usepackage{tikz}
\usepackage{pgfplots}
\pgfplotsset{compat=newest}
\usepackage{siunitx}
\DeclareSIUnit\angstrom{Å}
\usepackage{booktabs}
\usepackage{hyperref}
\usepackage[capitalize]{cleveref}
\newcommand{\dd}{\mathrm{d}}

\newcommand{\llbrack}{{\lbrack\hspace{-1.25pt}\lbrack}}
\newcommand{\rrbrack}{{\rbrack\hspace{-1.25pt}\rbrack}}
\newcommand{\mean}[1]{\{\!\!\{ #1 \}\!\!\}}
\newcommand{\jump}[1]{\llbrack #1 \rrbrack}

\newcommand{\background}{M}
\makeatletter
\def\@endtheorem{\endtrivlist}%
\makeatother
\newtheorem{remark}{Remark}
\newtheorem{theorem}{Theorem}
\newtheorem{lemma}{Lemma}
\newtheorem{corollary}{Corollary}

\newdefinition{definition}{Definition}

\definecolor{myblue}{HTML}{4477AA}%
\definecolor{myred}{HTML}{EE6677}%
\definecolor{mygreen}{HTML}{228833}%
\definecolor{myyellow}{HTML}{CCBB44}%
\definecolor{mycyan}{HTML}{66CCEE}%
\definecolor{mypurple}{HTML}{AA3377}%
\definecolor{mygrey}{HTML}{BBBBBB}%

\colorlet{color1}{myblue}
\colorlet{color2}{myred}
\colorlet{color3}{mygreen}
\colorlet{color4}{myyellow}
\colorlet{color5}{mycyan}
\colorlet{color6}{mypurple}
\colorlet{color7}{mygrey}

\pgfplotsset{cycle list = {{color1},{color2},{color3},{color4},{color5},{color6}}, samples=100,}
\pgfplotsset{/pgfplots/bar cycle list/.style={/pgfplots/cycle list={{fill=color1},{fill=color2},{fill=color3},{fill=color4},{fill=color5},{fill=color6}}},}
\pgfplotsset{colormap={mybluered}{
    rgb255=(54,75,154)
    rgb255=(74,123,183)
    rgb255=(110,166,205)
    rgb255=(152,202,225)
    rgb255=(194,228,239)
    rgb255=(234,236,204)
    rgb255=(254,218,139)
    rgb255=(253,179,102)
    rgb255=(246,126,75)
    rgb255=(221,61,45)
    rgb255=(165,0,38)
}}
\usepgfplotslibrary{fillbetween}
\usepgfplotslibrary{groupplots}

\makeatletter
\newif\iftag@here

\newcommand*{\taghere}[1][0pt]%
{\ifmeasuring@\else%
  \global\tag@heretrue
  \tikz[remember picture,overlay]{\coordinate (taghere) at (0pt,#1);}%
\fi}

\def\place@tag{%
    \iftagsleft@
      \kern-\tagshift@
      \iftag@here
        \global\tag@herefalse
        \tikz[remember picture,overlay]%
          {\path (taghere) -| node[anchor=base]{\rlap{\boxz@}} (0pt,0pt);}%
      \else
        \shift@tag\row@\relax
            \rlap{\vbox{%
                \normalbaselines
                \boxz@
                \vbox to\lineht@{}%
                \raise@tag
            }}%
                \kern\displaywidth@
      \fi%
    \else
      \kern-\tagshift@
      \iftag@here
        \global\tag@herefalse
        \tikz[remember picture,overlay]%
          {\path  (taghere) -|  node[anchor=base]{\llap{\boxz@}} (0pt,0pt);}%
      \else
        \shift@tag\row@\relax
            \llap{\vtop{%
                \raise@tag
                \normalbaselines
                \setbox\@ne\null
                \dp\@ne\lineht@
                \box\@ne
                \boxz@
            }}%
              \fi%
    \fi
}
\makeatother

\usepackage{soul}
\newlist{linelist}{enumerate*}{1}
\setlist[linelist]{label=(\textbf{C\arabic*})}

\newcommand{\kn}{\mathrm{Kn}}

\begin{document}

\begin{frontmatter}

	\title{Analysis of Moment Closures Using \texorpdfstring{$\varphi$}{phi}-Divergences for Rarefied Dynamics with Binary Collisions and Their Galerkin Discretizations}

	\author[tue,simkinetic]{M.R.A. Abdelmalik}
	\author[oden]{I.M. Gamba}
	\author[tue,simkinetic]{T. Ke{\ss}ler\corref{cor}}
	\ead{torsten.kessler@simkinetic.tech}
	\author[saar]{S. Rjasanow}

	\affiliation[tue]{organization={Department of Mechanical Engineering, Eindhoven University of Technology},
		city={Eindhoven}, country={Netherlands}}
	\affiliation[simkinetic]{organization={Simkinetic B.V.},
		city={Veldhoven}, country={Netherlands}}
	\affiliation[oden]{organization={Oden Institute, University of Texas at Austin},
		city={Austin}, state={Texas}, country={United States}}
	\affiliation[saar]{organization={Department of Mathematics, Saarland University},
	city={Saarbr\"{u}cken}, country={Germany}}
	\cortext[cor]{Corresponding author}

	\begin{abstract}
		This work introduces a robust deterministic framework for approximating solutions of the Boltzmann equation with binary collisions by discretizing their dependence on time, position, and velocity using Galerkin methods. By employing a family of parametric Galerkin closures based on $\varphi$-divergences in velocity space, we derive rigorous hierarchies of moment equations that govern fluid dynamic variables. Addressing the limitation that these closures alone do not guarantee dissipation of a $\varphi$-divergence entropy for the true binary collision operator, we restore this property by formulating a compatible approximate collision operator tailored to each closure. This constructed operator intrinsically retains fundamental physical properties essential for high-fidelity flow simulations, including Galilean invariance, exact conservation of mass, momentum, and energy, and strict dissipation of a $\varphi$-divergence entropy. Furthermore, we show that the resulting closed moment systems are symmetric-dissipative, yielding Cauchy problems that are well-posed locally in time. To translate this mathematical foundation into an efficient computational tool, we discretize the position and time variables with an entropy-stable discontinuous Galerkin (DG) finite element method. The fully implicit, entropy-stable space-time approach enables time steps far beyond typical CFL-limited step sizes and the direct computation of steady states. The robustness and accuracy of the methodology are verified and validated through numerical simulations on the supersonic nozzle flow of argon, mass flow through a channel, and heat transfer between parallel walls, demonstrating agreement with analytical benchmarks, experimental measurements, and stochastic particle simulations.
	\end{abstract}

	\begin{keyword}
		Boltzmann equation, Galerkin methods, moment systems, hyperbolic systems, finite element methods, entropy stability
	\end{keyword}

\end{frontmatter}

\section{Introduction}
The collective interactions among vast numbers of particles are fundamental to understanding transport phenomena in diverse engineering systems. For example, the design of high-altitude spacecraft \cite{ivanov1998}, satellites operating in Earth's orbit \cite{livadiotti2020}, micro-/nano-electromechanical systems \cite{karniadakis2005} and extreme ultraviolet photolithography machines \cite{kerkhof2022,mertens2000} requires simulating the flow of gases in low-pressure conditions. Under such conditions, the ratio of the mean free path of the gas particles to the characteristic physical scales becomes non-negligible, leading to a violation of the continuum hypothesis and the assumption of local thermodynamic equilibrium \cite{chapman1970}. This ratio is quantified by the Knudsen number $\kn$, which delineates the flow regimes: the continuum regime ($\kn \ll 1$), the transitional regime ($\kn \sim 1$), and the free-molecular regime ($\kn \to \infty$). Such a violation implies that classical fluid dynamic descriptions, such as the Euler and Navier--Stokes--Fourier equations, fail to capture the physical flow fields, necessitating a description of the flow based on particle transport.

Kinetic theory \cite{villani2002} offers a unified framework for modeling rarefied gases as systems composed of an effectively infinite number of particles. The state of the (monatomic) gas is described by a distribution function of time $t$, position $\bm x$, and velocity $\bm v$,
\begin{equation}\label{eq:densitydistribution}
	f : (0, \infty) \times \Omega \subset \mathbb{R}^3 \times \mathbb R^3 \to (0, \infty),~(t, \bm x, \bm v) \mapsto f(t, \bm x, \bm v),
\end{equation}
whose evolution is governed by the Boltzmann equation,
\begin{equation}
	\partial_t f + \bm v \cdot \bm \nabla_{\bm x} f = \mathcal C(f),
\end{equation}
with Boltzmann's binary collision operator $\mathcal C$ given in~\eqref{eq:collision} below.
The distribution encodes the fluid-dynamic fields as velocity moments of $f$: the density $\rho = \int_{\mathbb R^3} f \, \dd \bm v$, the bulk velocity $\bm u$, the temperature $\theta$, and the heat flux $\bm q$.
The collision operator $\mathcal C$ arises as the collective effect of binary collisions among a divergent number of particles under Boltzmann--Grad scaling \cite{grad1958},
\begin{equation}\label{eq:collision}
	\mathcal{C}(f(t,\bm x, \cdot))(\bm v) =\int_{\mathbb{R}^3}\int_{\mathbb{S}^2}
	B(\bm v - \bm v_*, \bm \sigma)
	\Big(f(\bm{\acute v}_*)f(\bm{\acute v}) - f(\bm v_*)f({\bm v})\Big) \, \dd{\bm\sigma} \, \dd \bm v_*,
\end{equation}
where $\mathbb S^2$ denotes the unit sphere in $\mathbb R^3$, and the post-collisional velocities $\bm{\acute v}$ and $\bm{\acute v_*}$ are related to the pre-collisional velocities $\bm v$ and $\bm v_*$ by
\begin{subequations}\label{eq:post-collional-velocities}
	\begin{align}
		\bm{\acute v}   & = \frac{1}{2}\big(\bm v + \bm v_* \big) + \frac{1}{2} |\bm v - \bm v_*| \bm \sigma, \\
		\bm{\acute v_*} & =\frac{1}{2}\big(\bm v + \bm v_* \big) - \frac{1}{2} |\bm v - \bm v_*| \bm \sigma,
	\end{align}
\end{subequations}
with $\bm \sigma \in \mathbb S^2$ the scattering direction. The collision kernel $B$ depends on the modeled inter-particle interaction; we assume the typical form for power-law interactions,
\begin{equation}\label{collisionWeightVHS}
	B(\bm v - \bm v_*, \bm \sigma) = |\bm v - \bm v_*|^{\lambda}  b\big( \bm{\hat u} \cdot \bm \sigma \big), \quad \lambda \in (-3, 1],
\end{equation}
with an integrable function $b : [-1,1] \to \mathbb R$ and $\bm{\hat u} = (\bm v - \bm v_*)/|\bm v - \bm v_*|$ the normalized relative velocity. This choice includes hard spheres ($\lambda = 1$) and Maxwell's molecules ($\lambda = 0$); the limit $\lambda \to -3$ recovers the Rutherford cross section for the Coulomb interaction.

While the collision operator in \eqref{eq:collision} describes the rate of change, due to collisions, of distributions that are generally out of thermodynamic equilibrium, $\mathcal C$ also encodes a characterization of thermodynamic equilibrium states: detailed balance of pre- and post-collisional states in the integrand of $\mathcal C$ characterizes the Maxwellian distributions,
\begin{equation}\label{eq:colleq}
	f(\acute{\bm v}_*)f(\acute{\bm v}) = f(\bm v_*)f(\bm v) \Leftrightarrow f =
	M_{\rho, \bm u, \theta}(\bm v) :=
	\frac{\rho}{(2\pi \theta)^{\frac{3}{2}}}\exp\left(-\frac{|\bm v-\bm u|^2}{2 \theta}\right)
\end{equation}
for some $(\rho, {\bm u}, \theta)\in\mathbb{R}_{>0}\times\mathbb{R}^3\times\mathbb{R}_{>0}$. Underpinning the characterization in \eqref{eq:colleq} is the conservation of particle mass, momentum and energy across collisions \cite{cercignani1988}:
\begin{equation} \label{eq:particle_conserv}
	\psi(\bm v) + \psi(\bm v_*) = \psi(\acute{\bm v}) + \psi(\acute{\bm v}_*) \Leftrightarrow \psi(\bm v)\in\mathrm{span}\{1, v_1, v_2, v_3, |\bm v|^2\} =:\mathscr I.
\end{equation}
The binary collision operator $\mathcal C$ thus describes collisions within particle systems both in and out of thermodynamic equilibrium, the perspective at the core of this work.

The equilibrium characterization \eqref{eq:colleq} embedded in the binary collision operator gives rise to thermodynamic relations for the fluid dynamical description induced by moments of the collision operator against a (bounded and measurable) test function $\psi(\bm v)$,
\begin{equation}\label{eq:mom_coll}
	\int_{\mathbb{R}^3} \psi(\bm v)\mathcal{C}(f) \, \dd\bm v
	= \int_{\mathbb{R}^3} \int_{\mathbb{R}^3}\int_{\mathbb{S}^2}
	\psi(\bm v)
	\Big(f(\bm{\acute v}_*)f(\bm{\acute v}) - f(\bm v_*)f({\bm v})\Big)  B(\bm v - \bm v_*, \bm \sigma)\, \dd{\bm\sigma} \, \dd \bm v_* \, \dd \bm v.
\end{equation}
The measure $ B(\bm v - \bm v_*, \bm \sigma) \dd{\bm\sigma} \, \dd \bm v_* \, \dd \bm v$ is invariant under the transformations
\begin{equation} \label{eq:invar_trans}
	(\bm \sigma, \bm v_*, \bm v) \mapsto  (\bm \sigma, \bm v, \bm v_*), \ (\bm \sigma, \acute{\bm v}, \acute{\bm v}_*) \mapsto (\bm \sigma, \bm v, \bm v_*) \text{ and }  (\bm \sigma, \acute{\bm v}_*, \acute{\bm v}) \mapsto (\bm \sigma, \acute{\bm v}, \acute{\bm v}_*),
\end{equation}
which leads to Galilean symmetry of $\mathcal C$ under translations $\mathcal T_{\bm z} f (\bm v) = f(\bm v-\bm z)$, $\bm z\in\mathbb{R}^3$, and rotations $\mathcal T_{\bm O} f = f(\bm O^{\top} \bm v)$, $\bm O\in\mathbb{R}^{3 \times 3}$ orthogonal,
\begin{equation}\label{eq:galil_symm}
	\int_{\mathbb R^3}
	\mathcal{T}_{\bm z}\mathcal{T}_{\bm O}\mathcal{T}_{-\bm z}\psi(\bm v) \mathcal C(f)
	\, \dd \bm v
	=
	\int_{\mathbb R^3} \psi(\bm v) (\mathcal{T}_{\bm z}\mathcal{T}_{\bm O}\mathcal{T}_{-\bm z})^* \mathcal C(f)
	\, \dd \bm v
	=
	\int_{\mathbb R^3} \psi(\bm v) \mathcal C((\mathcal{T}_{\bm z}\mathcal{T}_{\bm O}\mathcal{T}_{-\bm z})^* f)
	\, \dd \bm v.
\end{equation}
Applying the transformations \eqref{eq:invar_trans} to \eqref{eq:mom_coll} yields the symmetrized form
\begin{equation}\label{eq:symm_coll}
	\int_{\mathbb{R}^3} \psi(\bm v)\mathcal{C}(f) \, \dd\bm v
	= -\frac{1}{4} \int_{\mathbb R^3} \int_{\mathbb R^3} \int_{\mathbb S^2}
	(\acute f \acute f_* - f f_*)
	(\acute \psi + \acute \psi_* - \psi - \psi_*
	\Big)
	B\, \dd \bm \sigma \, \dd \bm v \, \dd \bm v_*,
\end{equation}
where, for brevity, we write $\acute{g}$, $g_*$ and $\acute{g}_*$ for $g(\acute{\bm v})$, $g(\bm v_*)$ and $g(\acute{\bm v}_*)$, respectively. Using \eqref{eq:particle_conserv} in \eqref{eq:symm_coll}, it follows directly that
\begin{equation}\label{eq:CollInvariant}
	\int_{\mathbb{R}^3}
	\psi(\bm v)\,\mathcal{C}(f)
	\, \dd \bm v
	=0
	\Leftrightarrow
	\psi(\bm v) \in \mathscr{I}.
\end{equation}
In particular, the choices $\psi = 1$, $\bm v$ and $|\bm v|^2/2$ in \eqref{eq:CollInvariant} imply the conservation of fluid mass, momentum and energy across collisions. Setting $\psi(\bm v) = \eta'(f)$, with
\begin{equation}\label{eq:boltz_ent}
	\eta(f)=f \ln(f) - f
\end{equation}
the Boltzmann entropy function, yields the entropy production inequality
\begin{equation}\label{eq:ent_prod_coll}
	\int_{\mathbb{R}^3}
	\eta'(f) \,\mathcal{C}(f)
	\, \dd \bm v
	=
	\int_{\mathbb{R}^3}
	\ln(f) \,\mathcal{C}(f)
	\, \dd \bm v
	=  -\frac{1}{4} \int_{\mathbb R^3} \int_{\mathbb R^3} \int_{\mathbb S^2}
	(\acute f \acute f_* - f f_*)
	\ln \left(\frac{\acute f \acute f_*}{ff_*}\right)
	B\, \dd \bm \sigma \, \dd \bm v \, \dd \bm v_* \leq 0,
\end{equation}
with equality if and only if $\ln f \in \mathscr I$. Together with \eqref{eq:colleq}, the entropy production inequality yields the characterization of local thermodynamic equilibrium
\begin{equation}
	\label{eq:Equilibrium}
	f = M_{\rho,\bm u,\theta}(\bm v)
	\iff
	\int_{\mathbb{R}^3} \ln(f)\,\mathcal{C}(f)\, \dd \bm v  = 0
	\iff
	\mathcal{C}(f) = 0.
\end{equation}
The Galilean symmetry \eqref{eq:galil_symm} and the relaxation induced by \eqref{eq:ent_prod_coll} towards the local equilibria \eqref{eq:Equilibrium} underpin the consistency of the kinetic description with the Euler and Navier--Stokes--Fourier descriptions in the continuum and near-continuum regimes, respectively \cite{saint-raymond2009}. Although other operators satisfy the conservation identity \eqref{eq:CollInvariant} and the entropy production inequality \eqref{eq:ent_prod_coll}, such as the BGK \cite{bhatnagar1954} and Shakhov \cite{shakhov1968} operators, the binary collision operator \eqref{eq:collision} is distinguished by its emergence from the collective behavior of particle systems. Kinetic systems governed by \eqref{eq:collision} are therefore uniquely suited to describe rarefied dynamics in the transitional regime between the continuum and free-streaming limits, by virtue of their inherent characterization of relaxation towards local thermodynamic equilibrium.

While the high dimensionality of the distribution function \eqref{eq:densitydistribution} already poses a challenge for numerical approximations of kinetic systems, accounting for binary collisions \eqref{eq:collision} compounds the difficulty, as evaluating the nested integrals over $\mathbb{R}^3 \times \mathbb{S}^2$ is computationally expensive \cite{dimarco2014}. The simulation of binary collisions has therefore been predominantly based on particle methods such as Direct Simulation Monte Carlo (DSMC) \cite{bird:1970}, which replaces the deterministic evaluation of the collision integrals by stochastic sampling over particles and whose convergence is well established \cite{wagner1992}. At the same time, the computational cost of DSMC grows considerably in slow flows and in regimes approaching the fluid-dynamical limit, where stochastic fluctuations are large compared to the fluid velocity, and the inherent stochastic noise complicates stable coupling to continuum descriptions such as the Navier--Stokes--Fourier equations \cite{degond2005}. These considerations highlight the need for methods that resolve rarefied dynamics while bridging to the fluid dynamical regime.

Deterministic methods may offer a noise-free alternative to DSMC for kinetic equations. The predominant approaches are Fourier transform methods (FTM), which rest on the Fourier representation of the elastic collision operator \eqref{eq:collision} for Maxwell-type interactions \cite{bobylev1988,bobylev2020}, whose transition probability retains the angular element but is independent of the intermolecular potential. The first conservative schemes for hard spheres in three dimensions \cite{bobylev1997,bobylev1999} apply the fast Fourier transform to the collision operator together with an algorithm that enforces the collision invariants. Spectral-Lagrangian methods for non-equilibrium statistical states \cite{gamba2009,gamba2010} restore the collision invariants by an $L^2(\mathbb R^d)$ minimization at $O(N)$ cost per time step for any velocity dimension $d \ge 3$. The same construction covers elastic and inelastic collisions, the latter of which do not conserve energy. For hard potentials in the space-homogeneous case, this Lagrangian-based conservative spectral method admits convergence and error estimates, together with a conditional approximation of the Maxwellian equilibrium determined by the initial data \cite{alonso2018}.

The $L^2$ minimization has since been adopted as an $O(N)$ correction to schemes that do not enforce the collision invariants, among them solvers for the Landau equation \cite{zhang2017,pennie2022}, for gas mixtures with anisotropic scattering \cite{gamba2014}, for polyatomic gases with internal energy levels \cite{munafo2014}, for kinetic swarming models \cite{gamba2015}, and for BGK, discontinuous Galerkin and moment schemes \cite{dimarco2013,zhang2018,dimarco2018}. Spectral methods that do not enforce the collision invariants \cite{pareschi1996,pareschi2000}, including the fast methods for general collision kernels \cite{mouhot2006,gamba2017}, have incorporated the same minimization, as has a moment preserving Fourier--Galerkin method \cite{pareschi2022}. Conservation of the collision invariants notwithstanding, discrete counterparts of the Galilean symmetry \eqref{eq:galil_symm} and of the entropy production relation \eqref{eq:ent_prod_coll} are in general not inherited, which has motivated steady-state preserving variants \cite{filbet2015}.

Discrete velocity models (DVM) \cite{mieussens2000,aristov2001} form a second family of deterministic schemes; like FTM, they truncate the velocity domain of the distribution function \eqref{eq:densitydistribution}. The so-called normal DVM \cite{bobylev1995} restores discrete analogues of these structural properties. Nevertheless, artifacts of the arbitrary velocity truncation and of misalignment between the normal vectors of the spatial domain boundary and the velocity grid persist and may impede the imposition of inflow boundary conditions \cite{brull2014}.

The method of moments \cite{grad1949,levermore1996,abdelmalik2016a,torrilhon2016} has emerged as an efficient deterministic approach that approximates solutions of kinetic systems by identifying a finite set of parameters of an approximate distribution based on its moments in velocity space. It is designed to approximate distributions over the full velocity domain $\mathbb R^3$, thereby avoiding the truncation-related artifacts introduced by DVM and FTM. Applied to kinetic equations, the method of moments engenders systems of evolution partial differential equations for the moments of the distribution function; these systems may be solved implicitly in time and designed to recover fluid dynamic descriptions in the continuum limit. Therefore, the moment-based approximation enables the use of classical coupling strategies for multiphysics \cite{keyes2013} and multiscale \cite{weinan2003} problems. However, previously derived estimates \cite{junk1998} and limits \cite{bardos1991} indicate that both the stability of implicit-in-time approximations and recovery of the fluid-dynamical limits are contingent upon methods that retain finite-dimensional representations of the Galilean symmetry \eqref{eq:galil_symm}, entropy production \eqref{eq:ent_prod_coll} and equilibrium \eqref{eq:Equilibrium} relations. While linear variants of the moment method have been applied to kinetic systems with binary collisions \eqref{eq:collision} in restricted settings \cite{cai2013}, such methods may fail to preserve the entropy production inequality \eqref{eq:ent_prod_coll}, which may preclude numerical approximations that are stable and consistent with fluid dynamic limits.

In this work we consider the application of moment methods that construct finite-dimensional approximations of distributions, referred to as \textit{moment closures}, conforming to minimizers of a class of generalized (relative) entropy functions, known as $\varphi$-divergences \cite{abdelmalik2016a}, subject to moment constraints. A $\varphi$-divergence is the convex functional
\begin{equation}
	D_\varphi(f \| M) = \int_{\mathbb R^3} M \, \varphi(f / M) \, \dd \bm v,
\end{equation}
that measures the discrepancy between the distribution $f$ and a reference Maxwellian $M$ through a strictly convex generator $\varphi$, the Kullback--Leibler divergence,
$\varphi(z) = z \ln z - z + 1$, being the canonical example. Such moment closures provide a unique framework for deriving finite-dimensional representations of kinetic equations that conform to symmetric hyperbolic systems, retaining the Galilean symmetries \eqref{eq:galil_symm} and equilibrium relations \eqref{eq:CollInvariant}. %
Specific to this work is the choice of a family of entropy functions that approximate Boltzmann's entropy function \eqref{eq:boltz_ent}. However, such approximations of the entropy function do not guarantee the entropy production inequality \eqref{eq:ent_prod_coll} for the binary collision operator \eqref{eq:collision}. In the remainder of this work we establish that moment closures based on $\varphi$-divergences are approximate entropy functions for kinetic equations with binary collisions \eqref{eq:collision}. Our characterization of approximate $\varphi$-divergence entropies for binary collision operators \eqref{eq:collision} constitutes the main contribution of this work, in particular:
\begin{enumerate}
	\item we construct a family of approximations to the binary collision operator \eqref{eq:collision} associated with the family of $\varphi$-divergence approximations of Boltzmann's entropy \eqref{eq:boltz_ent}. We establish that each member of the approximate collision operator family
	      \begin{enumerate}
		      \item satisfies \eqref{eq:invar_trans}, thereby complying with the physical principles of objectivity,
		      \item satisfies \eqref{eq:CollInvariant}, thereby retaining the collision invariants and not introducing any additional invariants,
		      \item satisfies \eqref{eq:ent_prod_coll} where $\eta'(f)$ is replaced with the derivative of the approximate $\varphi$-divergence entropy,
		      \item satisfies the characterization of equilibrium \eqref{eq:Equilibrium} for $\varphi$-divergence entropies with associated approximations of the Maxwellians $M_{\rho, \bm u, \theta}$;
	      \end{enumerate}
	\item we prove the well-posedness of the $\varphi$-divergence optimization problem subject to moment constraints in subspaces of $L^{1}(\mathbb{R}^3)$;
	\item we establish the finite-time well-posedness of the nonlinear Cauchy initial value problem of the partial differential equations engendered by the moment closure. The proof of well-posedness follows from the characterization of solutions of the so-called \textit{symmetric-dissipative} systems \cite{kawashima2004};
	\item we formulate a fully implicit, entropy-stable space-time discontinuous Galerkin finite element discretization of the closed moment systems, for which the discrete approximate entropy is bounded by that of the initial data (\cref{them:entropy-stability});
	\item we verify and validate our $\varphi$-divergence-based numerical approximations of solutions of kinetic equations with binary collisions against analytical benchmarks and experimental data, and against DSMC reference solutions for the supersonic nozzle flow of argon.
\end{enumerate}

The remainder of this paper is organized as follows. \Cref{sec:BE} reviews the salient properties of the Boltzmann equation, \cref{sec:approx_coll} introduces the converging sequence of approximate collision operators, and \cref{sec:MomSys} establishes the finite-time well-posedness of the closed moment systems. \Cref{sec:DGFEM} presents the fully implicit space-time discontinuous Galerkin discretization and its entropy stability. \Cref{sec:numerical-results} presents the verification and validation studies, and \cref{sec:outlook} concludes.

\section{Properties of Solutions of the Boltzmann Equation and a Property-Preserving Method of Moments}
\label{sec:BE}
Consider a monatomic gas inside a bounded spatial domain $\Omega \subseteq \mathbb R^3$ with inflow and outflow parts of $\partial\Omega$ respectively defined as
\begin{equation}
	\mathbb R^3_\text{in}(\bm x) = \{\bm v \in \mathbb R^3: \bm v \cdot \bm n(\bm x) \leq 0 \}, \quad
	\mathbb R^3_\text{out}(\bm x) = \{\bm v \in \mathbb R^3: \bm v \cdot \bm n(\bm x) \geq 0 \},
\end{equation}
where $\bm x \in \partial \Omega$ and $\bm n(\bm x)$ is the outward normal vector of $\Omega$ at $\bm x$.
We describe the state of the fluid by its kinetic distribution function
and assume that $f$ is governed by the Boltzmann equation
\begin{subequations}\label{eq:ibvBE}
	\begin{align}\label{eq:Boltzmann}
		\partial_t f + \bm v \cdot \bm \nabla_{\bm x} f - \mathcal{C}(f)     & =0, \quad t>0,~\bm x\in\Omega, ~\bm v\in\mathbb{R}^3;                                                \\
		f(0,\bm x, \bm v) - f_0(\bm x,\bm v)                                 & =0, \quad \bm x\in\Omega, ~ \bm v\in\mathbb{R}^3;                                                    \\
		\bm v \cdot \bm n(\bm x) (f(t, \bm x, \bm v) - f_w(t, \bm x, \bm v)) & =0, \quad t \geq 0,~\bm x \in \partial \Omega,~ \bm v \in \mathbb R^3_\text{in}(\bm x).\label{eq:bc}
	\end{align}
\end{subequations}
In the remainder of this section we highlight salient properties of solutions of \eqref{eq:Boltzmann} that are to be retained or approximated by our proposed moment method.

To complete the description of the initial--boundary value problem in \eqref{eq:ibvBE}, we specify the types of interaction of the fluid in $\Omega$ with the surrounding space. We consider the so-called specular reflection boundary condition
\begin{equation}\label{eq:bc-specular}
	f_w(t, \bm x, \bm v) = f(t, \bm x, \bm v - 2 \bm v \cdot \bm n(\bm x) \bm n(\bm x)), \quad t \geq 0,~\bm x \in \partial \Omega,~ \bm v \in \mathbb R^3_\text{in}(\bm x),
\end{equation}
where the impinging particles are elastically reflected, and the diffuse reflection boundary condition
\begin{equation}\label{eq:bc-diffuse}
	f_w(t, \bm x, \bm v) = \rho_\text{out}(t, \bm x) M_{1,\bm u_w(t, \bm x), \theta_w(t, \bm x)} \quad \forall t>0,~ \bm x \in \partial \Omega,~ \bm v \in \mathbb R^3_\text{in}(\bm x),
\end{equation}
where impinging particles are thermalized with temperature $\theta_w > 0$ and reflected with some velocity drawn from Maxwellian density $M_{1,\bm u_w, \theta_w}$ with $\bm u_w \in \mathbb R^3$ satisfying $\bm u_w\cdot\bm n(\bm x) = 0$, and the scaling factor $\rho_\text{out}$ enforces the balance of mass-flux at the boundary $\partial\Omega$,
\begin{equation}\label{eq:rho-out}
	\int_{\mathbb R^3_\text{out}(\bm x)} |\bm v \cdot \bm n(\bm x)| f(t, \bm x, \bm v) \, \dd \bm v%
	= \rho_{\text{out}}
	\displaystyle \int_{\mathbb R^3_\text{in}(\bm x)} |\bm v \cdot \bm n(\bm x)| M_{1,\bm u_w, \theta_w} \, \dd \bm v.
\end{equation}

Solutions of \eqref{eq:ibvBE} possess symmetry, conservation and dissipation properties on account of fundamental properties of $f\mapsto\mathcal{C}(f)$ and its corresponding weak form given by \eqref{eq:mom_coll} or \eqref{eq:symm_coll}, where we have implicitly assumed a specific domain of definition for $\mathcal C$,
\begin{equation}\label{eq:collisionDomainDefinition}
	\mathscr D(\mathcal C) = \{f \in L^1_2(\mathbb R^3) : \mathcal C(f) \in L_k^{1}(\mathbb R^3) \, \forall k \in \mathbb N \}.
\end{equation}
Here $L^1_k(\mathbb R^3)$ denotes the $L^1$ space with $k$-th order velocity weight $(1 + |\bm v|^2)^{k/2}$;
our choice of $\mathscr D(\mathcal C)$ ensures that all polynomials are admissible test functions.
For hard potentials, $\lambda \in [0,1]$, $\mathcal C$ maps Schwartz functions to Schwartz functions~\citep{duduchava2005},
so $S(\mathbb R^3) \subseteq \mathscr D(\mathcal C)$, which is therefore dense in $L^1(\mathbb R^3)$.

To approximate weak solutions of \eqref{eq:ibvBE}, we consider a semi-weak representation in the velocity variable:
integrating \eqref{eq:ibvBE} against test functions $\psi \in \mathscr O$,
the space of locally integrable functions in $\mathbb R^3$ with at most polynomial growth,
yields the moment system for $f \in \mathscr D(\mathcal C)$, one equation for each $\psi \in \mathscr O$,
\begin{equation}\label{eq:moments}
	\partial_t \int_{\mathbb R^3}
	\psi(\bm v) f(t, \bm x, \bm v)  \, \dd \bm v
	+
	\nabla_{\bm x} \cdot \int_{\mathbb R^3} \bm v \psi(\bm v)
	f(t, \bm x, \bm v) \, \dd \bm v
	-
	\int_{\mathbb R^3} \psi(\bm v)
	\mathcal C\big(f(t, \bm x, \cdot)\big)(\bm v)
	=
	0,
\end{equation}
for $t > 0$ and $\bm x \in \Omega$, subject to appropriate initial and boundary conditions.

In the remainder of this section we review these properties, viz., invariance under Galilean transformations, conservation of mass, momentum and energy, and dissipation of appropriate entropy functions. Our treatment is standard (see, for instance,~\cite{cercignani1988}) and is included for completeness.

\subsection{Galilean Symmetry}
For $\bm u \in \mathbb R^3$ and orthogonal $\bm O \in \mathbb R^{3 \times 3}$, the Galilei transform
$\big( \mathcal G_{\bm u, \bm O} f \big)(t, \bm x, \bm v) = f(t, \bm O\bm x - t \bm u, \bm O \bm v - \bm u)$
acts on functions on time-phase space.
As a consequence of the Galilean symmetry \eqref{eq:galil_symm} of the collision operator
and the chain rule applied to the transport operator, the Boltzmann operator
$\mathcal B(f) = \partial_t f + \bm v \cdot \nabla_{\bm x} f - \mathcal C(f)$
commutes with all Galilei transforms, so that $\mathcal G_{\bm u, \bm O} f$ solves the Boltzmann equation
whenever $f$ does and, provided solutions are unique, $f$ inherits the invariances of its data.
In \Cref{sec:numerical-results} we exploit such invariances, including the boundary conditions,
to reduce the functional dependency of the distribution function, for instance the cylindrical
symmetry in the computation of heat fluxes.

\subsection{Conservation Properties}
Testing \eqref{eq:Boltzmann} against each collision invariant $\psi \in \mathscr I$, the identity \eqref{eq:CollInvariant} associates with \eqref{eq:Boltzmann} the local conservation law
\begin{equation}
	\label{eq:locConsvLaw}
	\partial_t
	\int_{\mathbb{R}^3} \psi f\, \dd \bm v
	+
	\sum_{i=1}^3
	\partial_{x_i}
	\int_{\mathbb{R}^3}
	v_i \psi f \, \dd \bm v
	= 0,
\end{equation}
so that the local mass, momentum and energy are conserved.
Integrating \eqref{eq:locConsvLaw} over $\Omega$ and applying the divergence theorem
shows that specular reflection \eqref{eq:bc-specular} conserves global mass and energy
(since $|\bm v - 2(\bm v\cdot\bm n)\bm n|^2 = |\bm v|^2$) but not momentum,
while diffuse reflection \eqref{eq:bc-diffuse} conserves only global mass,
on account of \eqref{eq:rho-out}.

\subsection{Entropy Production}\label{sec:dissipation-continuous}
For any Maxwellian $M$ that is independent of $x$ and $t$, testing \eqref{eq:Boltzmann} against $\ln(f/M)$ yields the local relative-entropy law relative to $M$ for weak solutions of \eqref{eq:Boltzmann}:
\begin{align}
	\label{eq:relEntDiss}
	\partial_t\int_{\mathbb{R}^3} f \ln(f/M) - f + M \, \dd \bm v+\sum_{i=1}^3 \partial_{x_i}\int_{\mathbb{R}^3} v_i (f \ln(f/M) - f + M) \, \dd \bm v = \int_{\mathbb{R}^3} \ln(f/M)\mathcal{C}(f)\, \dd \bm v \leq 0\,.
\end{align}
Integrating \eqref{eq:relEntDiss} over $\Omega$, applying the divergence theorem,
and controlling the sign of the boundary term via the Darrozès--Guiraud
inequality~\citep{cercignani1988} (\cref{lem:Darrozes_Guiraud} below) yields the
global relative-entropy dissipation law
\begin{align}\label{eq:global-entropy-dissipation}
	\partial_t \int_{\Omega\times\mathbb{R}^3} f \ln(f/M) - f + M \, \dd\bm v \, \dd\bm x \leq 0.
\end{align}
\begin{lemma}\label{lem:Darrozes_Guiraud}
	Let $\widetilde{\eta}: \mathbb R \to \mathbb R$ be a convex function and impose at $\bm x \in \Gamma$ a convex combination of specular and diffuse reflection with zero wall velocity.
	If $M$ is a solution to the boundary condition at $\bm x$, then
	\begin{equation} \label{eq:lemmDG}
		\bm n(\bm x) \cdot \int_{\mathbb{R}^3} \bm v M \widetilde{\eta}(f / M)\, \dd\bm v\geq 0.
	\end{equation}
\end{lemma}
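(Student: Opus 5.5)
The plan is to split the boundary flux into outgoing and incoming parts and show that the incoming contribution is at most the outgoing one, using the boundary condition together with Jensen's inequality. Write $\bm n = \bm n(\bm x)$ and $F = f/M$. The boundary condition is $f_w = \alpha f(\bm v - 2(\bm v\cdot\bm n)\bm n) + (1-\alpha)\rho_{\text{out}} M_{1,\bm 0,\theta_w}$ on $\mathbb R^3_\text{in}$, with $\alpha\in[0,1]$. That $M$ solves this condition means $M$ is centred, so it is invariant under the reflection $R\bm v = \bm v - 2(\bm v\cdot\bm n)\bm n$. It also means $M$ is proportional to $M_{1,\bm 0,\theta_w}$ when $\alpha<1$; if $\alpha = 1$, only the reflection invariance is needed. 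The flux decomposes as
\[
\bm n\cdot\int_{\mathbb R^3}\bm v M\widetilde\eta(F)\,\dd\bm v
= \int_{\mathbb R^3_\text{out}}|\bm v\cdot\bm n| M\widetilde\eta(F)\,\dd\bm v - \int_{\mathbb R^3_\text{in}}|\bm v\cdot\bm n| M\widetilde\eta(F)\,\dd\bm v ,
\]
so it suffices to bound the incoming integral by the outgoing one.

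On $\mathbb R^3_\text{in}$ I substitute $f = f_w$. Since $M(R\bm v) = M(\bm v)$, we get $F(\bm v) = \alpha F(R\bm v) + (1-\alpha)c$, where $c = \rho_{\text{out}} M_{1,\bm 0,\theta_w}/M$ is a constant. Convexity of $\widetilde\eta$ then gives the pointwise bound
\[
\widetilde\eta(F(\bm v)) \le \alpha\,\widetilde\eta(F(R\bm v)) + (1-\alpha)\,\widetilde\eta(c).
\]
I treat the two terms separately after multiplying by $|\bm v\cdot\bm n|M$ and integrating over $\mathbb R^3_\text{in}$.

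\textbf{Specular term.} The change of variables $\bm v\mapsto R\bm v$ maps $\mathbb R^3_\text{in}$ onto $\mathbb R^3_\text{out}$ and preserves $|\bm v\cdot\bm n|$, $M$ and Lebesgue measure. Hence this term equals $\alpha\int_{\mathbb R^3_\text{out}}|\bm v\cdot\bm n|M\widetilde\eta(F)\,\dd\bm v$.

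\textbf{Diffuse term.} This step is the main obstacle; the approach is Jensen's inequality with respect to the probability measure $\mu = |\bm v\cdot\bm n|M\,\dd\bm v / Z$ on $\mathbb R^3_\text{out}$. By the symmetry of $M$, $Z = \int_{\mathbb R^3_\text{in}}|\bm v\cdot\bm n|M\,\dd\bm v$ is the same on both half-spaces. The mass-flux balance \eqref{eq:rho-out}, divided by the proportionality constant between $M$ and $M_{1,\bm 0,\theta_w}$, shows that $c = \int F\,\dd\mu$. Jensen then gives
\[
Z\,\widetilde\eta(c) \le \int_{\mathbb R^3_\text{out}}|\bm v\cdot\bm n|M\widetilde\eta(F)\,\dd\bm v .
\]
Adding the specular and diffuse bounds, the incoming integral is at most $(\alpha + 1 - \alpha)$ times the outgoing integral, which proves \eqref{eq:lemmDG}.

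Some technical points need care. Integrability of $M\widetilde\eta(F)|\bm v\cdot\bm n|$ can be assumed, or handled by allowing the value $+\infty$ on the outgoing side. The identification of $c$ as the $\mu$-average of $F$ relies on $\bm u_w = 0$ and on $M$ sharing the wall temperature $\theta_w$, which is exactly what is needed for $M$ to satisfy the diffuse condition.
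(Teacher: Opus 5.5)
Your proof is correct and is the classical Darrozès--Guiraud argument. The paper gives no proof of its own and defers to \cite{cercignani1988}, where Jensen's inequality is applied with respect to the probability kernel furnished by the boundary condition; for a convex combination of specular and diffuse reflection, this reduces exactly to your reflection change of variables together with Jensen for the flux-weighted measure $|\bm v\cdot\bm n|M\,\dd\bm v/Z$. The only step you assert rather than derive is that a Maxwellian $M$ solving the condition satisfies $M(R\bm v)=M(\bm v)$, i.e.\ $\bm u\cdot\bm n=0$ (weaker than being centred when $\alpha=1$), and that $M\propto M_{1,\bm 0,\theta_w}$ when $\alpha<1$; this follows by comparing the Gaussian exponents on the half-space, and it is precisely the situation described in \cref{rem:maxwellian-bc}.
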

\begin{remark}\label{rem:maxwellian-bc}
	The Maxwellian $M$ satisfies a convex combination of specular~\eqref{eq:bc-specular}
	and diffuse~\eqref{eq:bc-diffuse} reflection if $M$ is proportional to the wall Maxwellian;
	for pure specular reflection it suffices to choose $M$ with vanishing normal velocity,
	$\int_{\mathbb R^3} \bm n(\bm x) \cdot \bm v M(\bm v) \, \dd \bm v = 0$.
\end{remark}

\subsection{Property-Preserving Method of Moments}
Closing the system of moment equations \eqref{eq:moments} requires restricting the distribution function $f$ to a finite-dimensional space. To retain the local conservation laws \eqref{eq:locConsvLaw} in \eqref{eq:moments}, it suffices to require that $\mathscr{I}\subset\mathscr{O}$. Furthermore, the preservation of Galilean invariance relations in~\eqref{eq:moments} requires that the test space $\mathscr O$ and the trial space are both invariant with respect to $\mathcal G_{\bm u, \bm O}$. Such invariance is guaranteed by choosing polynomial test and trial spaces \cite{junk2002}.

To preserve the entropy inequality \eqref{eq:relEntDiss} we determine the approximate distribution by an entropy-minimization principle, using maximum entropy~\cite{dreyer1987,levermore1996} or, more generally,
$\varphi$-divergences~\cite{ali1966,csiszar1972,abdelmalik2016a}.
In this case, we seek an approximation $f_\text{KL}$ to a distribution function $f$ by minimizing the $\varphi$-divergence,
\begin{equation}
	\min \int_{\mathbb R^3} M \varphi(f_\text{KL} / M) \, \dd \bm v
	\quad \text{s.\,t.} \quad \forall m \in \mathscr M: \int_{\mathbb R^3} m \, f_\text{KL} \, \dd \bm v = \int_{\mathbb R^3} m \, f \, \dd \bm v,
\end{equation}
with a strictly convex function $\varphi$ of at most polynomial growth at infinity and a prior distribution $M$, usually a fixed Maxwellian;
here $\mathscr M$ denotes a finite-dimensional space of polynomial moment (test) functions, spanned by a basis $\bm m$ and containing the collision invariants $\mathscr I$. In words, $f_\text{KL}$ is the distribution closest to the prior $M$, as measured by the $\varphi$-divergence, that reproduces the prescribed moments of $f$.
First-order optimality conditions for the associated Lagrangian yield
\begin{equation}
	\varphi'(f_\text{KL} / M) = \bm \lambda \cdot \bm m
\end{equation}
pointwise, with Lagrange multipliers $\bm \lambda$ associated with the moment constraints. Since $\varphi$ is strictly convex, its derivative $\varphi'$ is injective; assuming that $\varphi'$ is also surjective, we may invert it to obtain the renormalized form
\begin{equation}
	f_\text{KL} = \beta(\bm \lambda \cdot \bm m),
	\qquad
	\beta(g) = M \varphi'^{-1}(g),
\end{equation}
with the renormalization mapping $\beta$. Inserting this form of $f_\text{KL}$ back into the moment constraints yields a nonlinear system for the multipliers,
\begin{equation}
	\int_{\mathbb R^3} \bm m \, \beta(\bm \lambda \cdot \bm m) \, \dd \bm v = \int_{\mathbb R^3} \bm m \, f \, \dd \bm v.
\end{equation}

With $\varphi(z) = z \ln z - z$, we have $\varphi'(z) = \ln z$
and thus recover the exponential renormalization $\beta(g) = e^g$
for a constant prior distribution, $M = 1$.
While this choice ensures that $\ln f_\text{KL} \in \mathscr M$,
it was shown in \cite{junk1998,junk2002} that the resulting closure may be ill-posed:
if $\mathscr M$ contains polynomials of even degree larger than two, then every neighborhood of equilibrium contains realizable moment vectors, that is, moments of some nonnegative distribution, that cannot be represented by any exponential ansatz $e^{\bm \lambda \cdot \bm m}$, so that the constrained minimization problem fails to have a solution. Consequently, when $\mathscr{M}$ contains polynomials of super-quadratic growth, little can be said
about the well-posedness of the initial-value problem even near equilibrium,
in accordance with well-posedness theories for hyperbolic laws~\cite{majda:1984wj}.
Moreover, the transport term in the Boltzmann equation may be singular at equilibrium points \cite{junk1998,dreyer2001}.

To address these ill-posedness issues associated with entropy-based moment closures,
we consider a sequence of renormalization mappings $\beta_N$, $N \in \mathbb N$, proposed in~\cite{abdelmalik2016a},
\begin{equation}\label{eq:phi_map}
	\beta_N(g) = \background \left( 1 + \frac{g}{N} \right)^N,
\end{equation}
with a fixed, global Maxwellian $M$.
Since $\beta_N(g)(\bm v) \to M(\bm v) e^{g(\bm v)}$ as $N \to \infty$, the mappings $\beta_N$ recover, up to a shift of $g$ by the quadratic polynomial $\log M$, the exponential renormalization map in the limit.

For $N = 1$, the renormalization mapping \eqref{eq:phi_map} coincides with
that of the Galerkin--Petrov method~\cite{gamba2018,kessler2019,hanke2023}
and with that of Grad \cite{grad1949,grad1958} near $M$.
The associated family of $\varphi$-divergences is given by
\begin{equation}\label{eq:phiNkullb}
	\varphi_N\left(z\right) = z N \left(z^{\frac{1}{N}}\frac{N}{N+1} - 1 \right) + \frac{N}{N+1},%
\end{equation}
with the limits
\begin{equation}\label{eq:convergence_phi_n}
	\lim_{N\to\infty} \varphi'_N(z) = \ln z, \quad \lim_{N \to \infty} \varphi_N(z) =  z\ln z- z + 1,
\end{equation}
for all $z > 0$.
In contrast to the exponential closure, the reconstruction problem for the coefficients $\bm \lambda$,
\begin{equation}\label{eq:phi_closure}
	\int_{\mathbb R^3} \bm m \, \beta_N(\bm \lambda \cdot \bm m) \, \dd \bm v = \int_{\mathbb R^3} \bm m \, f \, \dd \bm v,
\end{equation}
has a unique solution, given mild assumptions on the data $f$.
\begin{theorem}\label{thm:reconstruction-problem}
	For $f \in L^1_K(\mathbb R^3)$, where $K$ denotes the maximal polynomial degree of $\bm m$, the reconstruction problem
	has a unique solution for odd $N$.
\end{theorem}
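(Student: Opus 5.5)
We will obtain the solution of \eqref{eq:phi_closure} as the unique critical point of a strictly convex, coercive dual functional. For odd $N$ the map $s \mapsto (1+s/N)^N$ is strictly increasing on $\mathbb R$, with derivative $(1+s/N)^{N-1} \ge 0$ vanishing only at $s=-N$. Its primitive
\begin{equation*}
	\Phi_N(s) = \frac{N}{N+1}\left(1+\frac{s}{N}\right)^{N+1}
\end{equation*}
is therefore convex, nonnegative (since $N+1$ is even), strictly convex on $\mathbb R$, and of exact polynomial growth of degree $N+1$. Define the functional
\begin{equation*}
	J(\bm\lambda) = \int_{\mathbb R^3} M\, \Phi_N(\bm\lambda\cdot\bm m)\, \dd\bm v - \bm\lambda\cdot\int_{\mathbb R^3} \bm m\, f\, \dd\bm v, \qquad \bm\lambda \in \mathbb R^n .
\end{equation*}
The first term is finite because $M$ is Gaussian and the integrand grows only polynomially. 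The second term is finite because $f\in L^1_K$ and $\deg \bm m \le K$. Differentiation under the integral, justified by dominated convergence with Gaussian weights, gives $\nabla J(\bm\lambda) = \int \bm m\, \beta_N(\bm\lambda\cdot\bm m)\,\dd\bm v - \int \bm m f\,\dd\bm v$. Hence solutions of the reconstruction problem are exactly the critical points of $J$.

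\textbf{Uniqueness.} The Hessian is
\begin{equation*}
	\nabla^2 J(\bm\lambda) = \int M (1+\bm\lambda\cdot\bm m/N)^{N-1}\, \bm m\otimes\bm m\, \dd\bm v .
\end{equation*}
Since $N-1$ is even, the integrand weight is nonnegative and vanishes only on the zero set of the polynomial $1+\bm\lambda\cdot\bm m/N$. Note that this polynomial is not identically zero, because $1\in\mathscr M$ and the constant term is $1$ unless $\bm\lambda$ cancels it. That zero set is Lebesgue-null, unless the polynomial vanishes identically, which can happen only when $\bm\lambda\cdot\bm m\equiv -N$. This is a single point in $\bm\lambda$-space, since the basis $\bm m$ is linearly independent. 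Away from that point, linear independence of the basis functions on any set of positive measure gives $\bm\xi^\top\nabla^2J\,\bm\xi>0$ for $\bm\xi\neq 0$. At that single point $J$ is still strictly convex along every line, because a line meets the point at most once. Thus $J$ is strictly convex, and it has at most one critical point.

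\textbf{Existence.} It suffices to show that $J$ is coercive, $J(\bm\lambda)\to\infty$ as $|\bm\lambda|\to\infty$. Write $\bm\lambda = r\bm\omega$ with $|\bm\omega|=1$. Since $\bm\omega\cdot\bm m$ is a nonzero polynomial, $\int M \Phi_N(r\bm\omega\cdot\bm m)\,\dd\bm v \ge c\, r^{N+1}\int M |\bm\omega\cdot\bm m|^{N+1} \dd\bm v - C$ for large $r$. By continuity and compactness of the sphere, this is at least $c' r^{N+1} - C$ with $c'>0$ uniform in $\bm\omega$. The linear term is bounded by $r\,\|\bm m f\|_{L^1}$, so $J$ grows like $r^{N+1}$ with $N+1\ge 2$. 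A continuous, coercive, strictly convex function on $\mathbb R^n$ has a unique minimizer, and this minimizer is the desired critical point.

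\textbf{Main obstacle.} The step needing the most care is the uniform lower bound in the coercivity estimate, that is, making $\inf_{|\bm\omega|=1} \int M |\bm\omega\cdot\bm m|^{N+1}\dd\bm v>0$ precise together with the elementary inequality $\Phi_N(s)\ge c|s|^{N+1}-C$. Both follow from continuity in $\bm\omega$ and the linear independence of the $m_i$. Notice that, unlike the exponential case of \cite{junk1998}, no realizability or moment-space interior condition is needed. The dual functional is finite everywhere and superlinear in every direction, so every moment vector of an $L^1_K$ function, and indeed every vector in $\mathbb R^n$, lies in the range of $\nabla J$. Oddness of $N$ enters precisely through the monotonicity of $\beta_N$ and the nonnegativity and polynomial coercivity of $\Phi_N$ on all of $\mathbb R$. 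For even $N$, $\Phi_N$ would be unbounded below and the argument fails.
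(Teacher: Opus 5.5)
Your proof is correct. The analytic ingredients are the same as the paper's, but you package them variationally rather than through monotone-operator theory.

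The paper applies the finite-dimensional Minty--Browder theorem directly to the moment map $F(\bm\mu)=\int_{\mathbb R^3}\bm m\,\beta_N(\bm\mu\cdot\bm m)\,\dd\bm v$. Three properties give that $F$ is a bijection onto $\mathbb R^m$:
\begin{itemize}
\item continuity of $F$;
\item strict monotonicity, because $z\mapsto(1+z/N)^N$ is strictly increasing for odd $N$;
\item coercivity, $\bm\mu\cdot F(\bm\mu)/|\bm\mu|\to\infty$, from the leading binomial term with coefficient $\int_{\mathbb R^3}(\bm e\cdot\bm m)^{N+1}M\,\dd\bm v>0$, uniformly on the unit sphere.
\end{itemize}

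You instead notice that $F$ is the gradient of the convex potential $\bm\lambda\mapsto\int_{\mathbb R^3}M\,\Phi_N(\bm\lambda\cdot\bm m)\,\dd\bm v$, and you work one derivative lower. Strict convexity of $J$ replaces strict monotonicity of $F$. The growth $J\gtrsim r^{N+1}$ replaces coercivity of $F$. Existence then follows from the elementary fact that a continuous coercive function attains its minimum, instead of from Minty--Browder, whose finite-dimensional proof typically goes through Brouwer's fixed-point theorem. Your explicit bound $\Phi_N(s)\ge c|s|^{N+1}-C$ also makes the uniform coercivity step clearer than the paper's remark that the remaining terms are lower order.

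Your route gives a variational characterization. Since $\Phi_N=\varphi_N^*+\tfrac{N}{N+1}$, $J$ is, up to an additive constant, the negative Lagrangian dual function of the constrained $\varphi_N$-divergence minimization. So $\bm\lambda$ is the unique minimizer of a smooth, strictly convex, coercive function, which is also a convenient formulation for computing it. The paper's route is shorter and would work even without a potential.

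A small simplification: you can skip the separate treatment of the degenerate point $\bm\lambda\cdot\bm m\equiv-N$. $\Phi_N$ is strictly convex because $\Phi_N'$ is strictly increasing. For $\bm\lambda\neq\bm\mu$, the polynomial $(\bm\lambda-\bm\mu)\cdot\bm m$ is nonzero almost everywhere. Hence $J$ is strictly convex directly.
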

However, unlike $\ln f$, the function $\varphi_N'(f/M)$ may not satisfy the dissipation relation; in particular, the dissipative properties of
\begin{equation}\label{eq:phidiss}
	\int_{\mathbb R^3} \varphi_N'\left(\frac{f(t,\bm x,\bm v)}{M(\bm v)}\right) \mathcal C(f(t,\bm x, \cdot))(\bm v) \, \text{d}\bm v
\end{equation}
are not well understood, leaving a gap in our understanding of the moment systems obtained by applying
the $\varphi$-divergence closures in \eqref{eq:phi_map} to \eqref{eq:moments}.
Such dissipative properties play an important role in the design of stable space-time numerical approximations,
see~\cite{barth2006,abdelmalik2016b} and~\cite{hughes1986,tadmor1987}.
In the next section, we elucidate the dissipative properties of~\eqref{eq:phidiss}
by constructing a sequence of property-preserving approximations $\mathcal C_N$ to Boltzmann's collision operator $\mathcal C$.

\section{\texorpdfstring{$\varphi$}{phi}-Divergence Conforming Approximations of the Collision Operator}
\label{sec:approx_coll}

In this section, we control the error in the entropy production inequality incurred when using $\varphi$-divergence renormalization maps \eqref{eq:phi_map} by introducing a convergent sequence of approximations to the collision operator, parametrized by $N$ and denoted by $\mathcal C_N$:
\[
	\int_{\mathbb R^3} \varphi_N'\left(\frac{f(t,\bm x,\bm v)}{M(\bm v)}\right) \mathcal C(f) \, \text{d}\bm v
	=
	\underbrace{
		\int_{\mathbb R^3} \varphi_N'\left(\frac{f(t,\bm x,\bm v)}{M(\bm v)}\right) \mathcal C_N(f) \, \text{d}\bm v
	}_{\leq 0}
	+
	\underbrace{
		\int_{\mathbb R^3} \varphi_N'\left(\frac{f(t,\bm x,\bm v)}{M(\bm v)}\right) (\mathcal C(f) - \mathcal C_N(f)) \, \text{d}\bm v
	}_{\rightarrow 0 \text{ as } N\rightarrow\infty}.
\]
We emphasize that the index $N$ refers throughout to the polynomial order of the closure map $\beta_N$ and never to the collision operator itself. The operators $\mathcal C_N$ are auxiliary constructions for which the approximate $\varphi$-divergence entropy $\varphi_N$ is dissipated exactly. All numerical experiments reported in \cref{sec:numerical-results} are computed with the full physical binary collision operator $\mathcal C$ in \eqref{eq:collision}, not with $\mathcal C_N$.
Our formulation of $\mathcal C_N$ is based on approximations of the products of two distributions that appear in the integrand of $\mathcal C$ in \eqref{eq:collision}. Such approximate algebra and its use in constructing approximate collision operators based on approximate entropies have been considered before; see, e.g., \cite{borges2004,nivanen2003}. In contrast, our choice of approximate entropy functions \eqref{eq:phiNkullb} ensures that
\begin{itemize}
	\item the first member of the family $\mathcal C_N$, denoted by $\mathcal C_1$, as well as the linearization of any other member of $\mathcal C_N$ $(N>1)$ around the background Maxwellian $M(\bm v)$ coincide with the linearization of $\mathcal C$ around $M(\bm v)$, see \cref{thm:linearization-approximate-collision};
	\item our closure mappings \eqref{eq:phi_closure} remain polynomial, thereby enabling the use of optimal quadrature rules to carry out velocity integrals;
	\item the sequence of approximate collision operators $\mathcal C_N$ converges weakly to $\mathcal C$, see \cref{thm:convergence-approximate-collision}.
\end{itemize}
We conclude this section with a numerical example elucidating the properties of $\varphi$-divergences \eqref{eq:phiNkullb} for approximating the entropy production inequality \eqref{eq:ent_prod_coll}.

\begin{definition}
	For $N \in \mathbb N$ odd, the $N$-th order approximate product $P_N(z_1, z_2)$ of $z_1, z_2 \in \mathbb R$ is given by
	\begin{equation}
		P_N(z_1, z_2) = \big(z_1^{\frac{1}{N}} + z_2^{\frac{1}{N}} - 1 \big)^N,
	\end{equation}
	with $z^{\frac{1}{N}} = - |z|^{\frac{1}{N}}$ for $z < 0$.
\end{definition}
The divergence $\varphi_N$ and the approximate product are linked by the following
functional equation.
\begin{lemma}
	For $z_1, z_2 \in \mathbb R$,
	\begin{equation}\label{eq:functionalEquationPhi_N}
		\varphi_N'\big( P_N(z_1, z_2) \big) = \varphi_N'(z_1) + \varphi_N'(z_2).
	\end{equation}
\end{lemma}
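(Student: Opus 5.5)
Differentiating \eqref{eq:phiNkullb} gives the explicit derivative
\[
\varphi_N'(z) = N\Big(z^{\frac1N}\tfrac{N}{N+1} - 1\Big) + zN\cdot\tfrac{N}{N+1}\cdot\tfrac1N z^{\frac1N-1} = N\big(z^{\frac1N}-1\big),
\]
valid for $z>0$. We then extend it to all $z \in \mathbb R$ with the odd-root convention $z^{1/N} = -|z|^{1/N}$ for $z<0$, exactly as in the definition of $P_N$. For odd $N$ the map $z\mapsto z^{1/N}$ is the real inverse of $y\mapsto y^N$ on all of $\mathbb R$. This is the interpretation under which the closure map $\beta_N(g)=M(1+g/N)^N$ inverts $\varphi_N'$ with $M$ factored out, i.e.\ $\varphi_N'^{-1}(g) = (1+g/N)^N$. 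The lemma is therefore an identity about the function $\varphi_N'(z) = N(z^{1/N}-1)$ on $\mathbb R$, and I would state this interpretation explicitly at the start.

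With this formula the proof is a one-line computation. Set $s = z_1^{1/N}+z_2^{1/N}-1 \in \mathbb R$, so that $P_N(z_1,z_2) = s^N$. Since $N$ is odd, $(s^N)^{1/N} = s$ for every real $s$, including negative $s$, by the sign convention. Hence
\[
\varphi_N'\big(P_N(z_1,z_2)\big) = N\big(s - 1\big) = N\big(z_1^{\frac1N}-1\big) + N\big(z_2^{\frac1N}-1\big) = \varphi_N'(z_1)+\varphi_N'(z_2).
\]

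The only step needing care is the well-definedness of $\varphi_N'$ for nonpositive arguments. This matters because $P_N(z_1,z_2)$ can be negative even when $z_1,z_2>0$, for instance when both are small. I would handle it by defining $\varphi_N'$ on $\mathbb R$ through the odd-root formula and noting that this agrees with the derivative of $\varphi_N$ wherever the latter is defined. The identity $(s^N)^{1/N}=s$ for odd $N$ and real $s$ is the key fact: it follows because $y\mapsto y^N$ is a strictly increasing bijection of $\mathbb R$ whose inverse is the signed root. No other earlier result is needed.
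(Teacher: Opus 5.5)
Your proof is correct and is the direct computation the paper leaves implicit, since it states this lemma without proof: $\varphi_N'(z)=N(z^{1/N}-1)$, and $(s^N)^{1/N}=s$ for odd $N$. One refinement: with the signed-root convention, $\varphi_N$ is already defined on all of $\mathbb R$ (as in \cref{thm:dissipation-approximate-collision}), and $z\,z^{1/N}=|z|^{1+1/N}$ is $C^1$ with derivative $(1+\tfrac1N)z^{1/N}$, so your formula for $\varphi_N'$ is a genuine derivative everywhere rather than merely an extension by definition.
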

The approximate product gives rise to a sequence of approximate collision operators.
\begin{definition}
	For $N \in \mathbb N$ odd and a fixed Maxwellian $M$, the approximate collision operator $\mathcal C_N$
	is defined for $f \in \mathcal D(\mathcal C)$ by
	\begin{equation}
		\mathcal C_N(f)(\bm v)
		= \int_{\mathbb R^3} \int_{\mathbb S^2} B(\bm v - \bm v_*, \bm \sigma) \background \background_*
		\left[
			P_N\left( \frac{\acute f}{\acute M}, \frac{\acute f_*}{\acute M_*} \right)
			-
			P_N\left( \frac{f}{M}, \frac{f_*}{M_*} \right)
			\right] \, \dd \bm \sigma \, \dd \bm v_*, \quad \bm v \in \mathbb R^3,
	\end{equation}
	where we use the notation $\acute f = f(\acute{\bm v})$, $\acute f_* = f(\acute{\bm v}_*)$, $f_* = f(\bm v_*)$
	to denote evaluation at the pre- and post-collisional velocities.
\end{definition}
Approximate collision operators share many properties with Boltzmann's collision operator,
viz., Galilean invariance relative to the background, a five-dimensional space of collision invariants comprising mass, momentum and energy, and a dissipation theorem.
The proofs of the following theorems are deferred to \labelcref{sec:details-approximate-collision}.

\begin{theorem}\label{thm:galilean-approximate-collision}
	Let $\bm u \in \mathbb R^3$ denote the mean velocity of the Maxwellian background $M$.
	The approximate collision operator $\mathcal C_N$ commutes with rotations relative to the moving background,
	\[
		\mathcal C_N \circ \mathsf T = \mathsf T \circ \mathcal C_N,
	\]
	where $\mathsf T = \tau_{- \bm u} \circ \mathsf T_O \circ \tau_{\bm u}$, $\tau_{\bm u}$ denotes the translation operator with shift $\bm u$ and $\mathsf T_O$ the rotation operator
	that rotates function arguments by $\bm O^{\top}$ for an orthogonal matrix $\bm O \in \mathbb R^{3 \times 3}$.
\end{theorem}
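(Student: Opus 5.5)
We verify the commutation pointwise: we show $\mathcal C_N(\mathsf T f)(\bm v) = \mathcal C_N(f)(\mathsf T^{-1}\text{-transformed point})$, i.e.\ $\mathcal C_N(\mathsf T f) = \mathsf T\,\mathcal C_N(f)$. Writing out the conventions, $(\mathsf T f)(\bm v) = f(A\bm v)$ with the affine isometry $A\bm v = \bm O^{\top}(\bm v - \bm u) + \bm u$ (up to the sign convention of $\tau$, which only changes the direction of the shift and not the argument). The proof rests on three invariances, which we check in turn.

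\textbf{(i) The background is invariant.} $M = M_{\rho,\bm u,\theta}$ depends on $\bm v$ only through $|\bm v - \bm u|$, so $M(A\bm v) = M(\bm v)$. Consequently the ratios satisfy $(\mathsf T f)/M = \mathsf T(f/M)$, and the prefactor $M M_*$ is unchanged under $\bm v \mapsto A\bm v$, $\bm v_* \mapsto A\bm v_*$.

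\textbf{(ii) Collision kinematics are equivariant.} For the post-collisional velocities \eqref{eq:post-collional-velocities}, replacing $(\bm v,\bm v_*,\bm\sigma)$ by $(A\bm v, A\bm v_*, \bm O^{\top}\bm\sigma)$ gives
\[
\tfrac12(A\bm v + A\bm v_*) \pm \tfrac12|\bm O^{\top}(\bm v-\bm v_*)|\,\bm O^{\top}\bm\sigma = A\Big(\tfrac12(\bm v+\bm v_*) \pm \tfrac12|\bm v-\bm v_*|\bm\sigma\Big).
\]
This holds because $A$ is affine with linear part $\bm O^{\top}$ and $|\bm O^{\top}\bm w| = |\bm w|$. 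Hence $\acute{\bm v}$ and $\acute{\bm v}_*$ map to $A\acute{\bm v}$ and $A\acute{\bm v}_*$.

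\textbf{(iii) The kernel and measures are invariant.} $B(A\bm v - A\bm v_*, \bm O^{\top}\bm\sigma) = |\bm v-\bm v_*|^\lambda\, b\big(\bm O^{\top}\hat{\bm u}\cdot \bm O^{\top}\bm\sigma\big) = B(\bm v-\bm v_*,\bm\sigma)$. In addition, Lebesgue measure on $\mathbb R^3$ and surface measure on $\mathbb S^2$ are invariant under $\bm v_* \mapsto A\bm v_*$ and $\bm\sigma\mapsto\bm O^{\top}\bm\sigma$, since both have unit Jacobian determinant.

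\textbf{Assembly.} We compute $\mathsf T(\mathcal C_N f)(\bm v) = \mathcal C_N(f)(A\bm v)$ and substitute $\bm v_* = A\bm w_*$, $\bm\sigma = \bm O^{\top}\bm\tau$ in the defining integral. By (ii), the arguments of $P_N$ become $f(A\acute{\bm v})/M(A\acute{\bm v}) = (\mathsf T f)(\acute{\bm v})/M(\acute{\bm v})$ by (i), and likewise for the other three evaluations. By (iii) and (i), $B\,M M_*$ and the measure are unchanged. The integral is therefore exactly $\mathcal C_N(\mathsf T f)(\bm v)$. Note that $P_N$ enters only as a pointwise function of these ratios, so its nonlinearity plays no role.

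The only delicate point is bookkeeping rather than analysis. The translation conventions of $\tau_{\pm\bm u}$ must be matched so that the rotation is about $\bm u$, the mean velocity of the prior; the background is invariant only for that center, which is why the theorem is stated relative to the moving background. We must also check that $\mathsf T$ preserves $\mathscr D(\mathcal C)$. This follows from the same change of variables, since the weights $(1+|\bm v|^2)^{k/2}$ change only by bounded factors under an affine isometry.
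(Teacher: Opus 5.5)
Your proof is correct and follows the same change-of-variables argument as the paper, which invokes $\mathsf T B = B$ and $\mathsf T M = M$ and substitutes in the $\bm v_*$ integral. Your version is more explicit: you also rotate $\bm\sigma$, verify the equivariance of the post-collisional velocities, check that $\mathsf T$ preserves $\mathscr D(\mathcal C)$, and rightly insist that the conventions for $\tau_{\pm\bm u}$ make $\mathsf T$ a rotation about $\bm u$. The paper's one-line substitution $\bm v_* \mapsto \bm v_* - \bm u$ leaves all of this implicit, and that last point is exactly what its use of $\mathsf T M = M$ requires.
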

The next theorem shows that our approximate collision operators do not add any collision invariants
beyond the conservation of mass, momentum and energy.

\begin{theorem}\label{thm:conservation-approximate-collision}
	For a given test function $\psi$, the weak form of $\mathcal C_N$ vanishes for all $f \in \mathcal D(\mathcal C)$,
	\[
		\int_{\mathbb R^3} \psi \mathcal C_N(f) \, \dd \bm v = 0,
	\]
	if and only if $\psi$ is a collision invariant,
	\[
		\psi(\bm v) = a |\bm v|^2 + \bm b \cdot \bm v + c, \quad \bm v \in \mathbb R^3,
	\]
	with $a, c \in \mathbb R$ and $\bm b \in \mathbb R^3$.
\end{theorem}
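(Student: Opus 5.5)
The plan is to prove both directions through the symmetrized weak form of $\mathcal C_N$, obtained exactly as \eqref{eq:symm_coll} was obtained for $\mathcal C$.

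\textbf{Symmetrized weak form.} Write $g = f/M$ and
\[
	\Pi(\bm v,\bm v_*) = M M_*\, P_N(g, g_*).
\]
The key observation is that the prefactor $M M_*$ is invariant under the collision map: since $\ln M \in \mathscr I$ by \eqref{eq:particle_conserv}, we have $\acute M\acute M_* = M M_*$. Hence
\[
	\mathcal C_N(f) = \int\!\!\int B\,\big(\acute \Pi - \Pi\big)\, \dd\bm\sigma\, \dd\bm v_* ,
\]
where $\acute\Pi$ denotes $\Pi$ evaluated at $(\acute{\bm v},\acute{\bm v}_*)$. The function $\Pi$ is symmetric in its two arguments because $P_N$ is symmetric. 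This structure is exactly that of $\mathcal C$, with $ff_*$ replaced by $\Pi$. Applying the measure-preserving transformations \eqref{eq:invar_trans} therefore gives
\[
	\int \psi\, \mathcal C_N(f)\, \dd\bm v = -\tfrac14 \int\!\!\int\!\!\int \big(\acute \Pi - \Pi\big)\big(\acute\psi+\acute\psi_*-\psi-\psi_*\big) B \, \dd\bm\sigma\, \dd\bm v\, \dd\bm v_* .
\]
The domain assumption $f\in\mathscr D(\mathcal C)$, together with the polynomial growth of $P_N$, is used to justify that the integrals are finite and that Fubini applies. I would state this as a mild integrability check.

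\textbf{The ``if'' direction.} If $\psi\in\mathscr I$, then by \eqref{eq:particle_conserv} the second factor vanishes identically. The weak form is therefore zero.

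\textbf{The ``only if'' direction.} Suppose the weak form vanishes for all admissible $f$; this is the main obstacle. The plan is to choose $f$ cleverly and linearize around the background. Take $f = M(1+\epsilon h)$ with $h$ a Schwartz function, so that $f\in\mathscr D(\mathcal C)$ for hard potentials. Then $g = 1+\epsilon h$, and since $P_N(1+\epsilon h, 1+\epsilon h_*) = 1 + \epsilon(h+h_*) + O(\epsilon^2)$, differentiating at $\epsilon=0$ yields
\[
	0 = \int\!\!\int\!\!\int M M_* \big(\acute h+\acute h_*-h-h_*\big)\big(\acute\psi+\acute\psi_*-\psi-\psi_*\big) B \, \dd\bm\sigma\, \dd\bm v\, \dd\bm v_*
\]
for all $h$. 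This is the bilinear form of the linearized Boltzmann operator about $M$. Choosing $h$ as smooth approximations of $\psi$ (cutoffs times $\psi$, passing to the limit using the polynomial growth of $\psi$ and Gaussian decay of $MM_*$) gives
\[
	\int\!\!\int\!\!\int M M_* \big(\acute\psi+\acute\psi_*-\psi-\psi_*\big)^2 B = 0 .
\]
Hence $\acute\psi+\acute\psi_*=\psi+\psi_*$ almost everywhere on the support of $B$. Assuming $b$ is positive on a set of positive measure (a nondegeneracy condition I would make explicit), the classical characterization of measurable collision invariants \eqref{eq:particle_conserv} yields $\psi\in\mathscr I$; I would cite \cite{cercignani1988} for the almost-everywhere version. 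The delicate points are the density and limiting argument for $h\to\psi$ and the a.e. version of the collision-invariant characterization. The latter requires $\psi$ locally integrable, which holds since $\psi\in\mathscr O$.
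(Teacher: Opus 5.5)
Your proposal is correct and follows the paper's route. The ``if'' direction uses the symmetrized weak form \eqref{def:IncompleteWeak}, and the ``only if'' direction linearizes $\mathcal C_N$ about $M$ (\cref{thm:linearization-approximate-collision}) to reduce to the collision invariants of $\mathcal L$. The paper simply cites the classical fact that these invariants lie in $\mathscr I$. You instead unpack that fact: you insert $h \to \psi$ into the linearized bilinear form to get a vanishing quadratic form. That is the standard proof of the cited fact, and it makes the nondegeneracy of $b$ and the growth assumption on $\psi$ explicit.
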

Approximate collision operators dissipate a specific convex entropy.
\begin{theorem}\label{thm:dissipation-approximate-collision}
	The convex entropy density $\varphi_N: \mathbb R \to \mathbb R$,
	\[
		\varphi_N(z) = z N \Big( z^{\frac{1}{N}} \frac{N}{N + 1} - 1 \Big) + \frac{N}{N + 1},
	\]
	is dissipated by $\mathcal C_N$,
	\[
		\int_{\mathbb R^3} \varphi_N'(f / M) \mathcal C_N(f) \, \dd \bm v \leq 0,
	\]
	with equality if, and only if, $\varphi_N'(f / M)$ is a collision invariant.
\end{theorem}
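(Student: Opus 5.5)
The plan is to mimic the classical proof of the $H$-theorem \eqref{eq:ent_prod_coll}, replacing the product $ff_*$ by the approximate product $P_N$ and the logarithm by $\varphi_N'$. Write $g = f/M$ and abbreviate $p = P_N(g, g_*)$ and $\acute p = P_N(\acute g, \acute g_*)$.

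\emph{Step 1: symmetrization.} The weight $M M_*$ is invariant under the exchange of pre- and post-collisional velocities, since $M(\bm v)M(\bm v_*) = M(\acute{\bm v})M(\acute{\bm v}_*)$ by energy and momentum conservation \eqref{eq:particle_conserv}. Therefore $B M M_* \,\dd\bm\sigma\,\dd\bm v_*\,\dd\bm v$ enjoys the same invariances \eqref{eq:invar_trans} as the measure in \eqref{eq:mom_coll}. Repeating the derivation of \eqref{eq:symm_coll} with $\acute f\acute f_* - ff_*$ replaced by $\acute p - p$ yields, for any admissible test function $\psi$,
\[
\int_{\mathbb R^3}\psi\,\mathcal C_N(f)\,\dd\bm v = -\frac14\int_{\mathbb R^3}\int_{\mathbb R^3}\int_{\mathbb S^2} B\,M M_*\,(\acute p - p)\,(\acute\psi+\acute\psi_*-\psi-\psi_*)\,\dd\bm\sigma\,\dd\bm v\,\dd\bm v_*.
\]
This symmetrization also underlies \cref{thm:conservation-approximate-collision}.

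\emph{Step 2: the functional equation.} Take $\psi = \varphi_N'(g)$. The functional equation \eqref{eq:functionalEquationPhi_N} gives $\psi+\psi_* = \varphi_N'(p)$ and $\acute\psi+\acute\psi_* = \varphi_N'(\acute p)$. The integrand therefore becomes $BMM_*\,(\acute p - p)\bigl(\varphi_N'(\acute p)-\varphi_N'(p)\bigr)$. A direct computation gives $\varphi_N'(z) = N(z^{1/N}-1)$, using the odd-root convention for $z<0$, and this is strictly increasing on $\mathbb R$ because $N$ is odd. Hence each integrand is nonnegative, and since $B\ge 0$ the integral, with the prefactor $-\tfrac14$, is $\le 0$.

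\emph{Step 3: the equality case.} Equality holds iff $\varphi_N'(\acute p) = \varphi_N'(p)$ for almost every $(\bm v,\bm v_*,\bm\sigma)$ in the support of $B$, where we assume $b>0$ on a set of positive measure. By the functional equation this is the statement that $\psi = \varphi_N'(f/M)$ satisfies $\psi+\psi_* = \acute\psi+\acute\psi_*$ almost everywhere. The classical characterization \eqref{eq:particle_conserv} of solutions of this functional equation then gives $\psi\in\mathscr I$. Conversely, if $\psi\in\mathscr I$, the factor $\acute\psi+\acute\psi_*-\psi-\psi_*$ vanishes identically, so equality holds.

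\emph{Main obstacle.} The delicate point is justifying Step 1 for $\psi = \varphi_N'(f/M)$: this $\psi$ grows like $|f/M|^{1/N}$, and that growth can be super-polynomial because $M$ is Gaussian. Integrability is therefore not automatic from $f\in\mathscr D(\mathcal C)$. I would handle this by assuming, or restricting to, $f$ for which $\varphi_N'(f/M)\,\mathcal C_N(f)$ and the symmetrized integrand are integrable. Alternatively, one can truncate $\psi$, noting that the truncated integrand is still nonnegative by monotonicity, and pass to the limit by monotone convergence. A secondary subtlety is invoking \eqref{eq:particle_conserv} for merely measurable $\psi$; here I would cite the standard result in \cite{cercignani1988} for measurable, locally integrable solutions.
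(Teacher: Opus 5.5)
Your argument is correct and is essentially the paper's proof. The paper also starts from the symmetrized weak form of $\mathcal C_N$ (the second form in \eqref{def:IncompleteWeak}, which relies on the invariance $MM_* = \acute M \acute M_*$ that you make explicit), rewrites $\acute\psi+\acute\psi_*-\psi-\psi_*$ as $\varphi_N'(\acute p)-\varphi_N'(p)$ via \eqref{eq:functionalEquationPhi_N}, concludes by monotonicity of $\varphi_N'$, and uses its injectivity to reduce the equality case to the collision-invariant functional equation.

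Your integrability caveat is legitimate, and the paper tacitly assumes it, but drop the option of truncating $\psi$: a clipped $\psi$ no longer satisfies $\psi+\psi_*=\varphi_N'(p)$, so the truncated integrand can be negative. A sign-preserving cutoff must instead act on the pair, e.g.\ $|\bm v|^2+|\bm v_*|^2\le R^2$ (invariant under \eqref{eq:invar_trans}), and even then identifying the limit with $\int\psi\,\mathcal C_N(f)\,\dd\bm v$ requires the integrability you otherwise assume.
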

\begin{corollary}\label{cor:kernel-approximate-collision}
	The approximate collision operator $\mathcal C_N$ vanishes for $f \in \mathcal D(\mathcal C)$ if, and only if,
	$f$ is an approximate Maxwellian,
	\[
		f(\bm v) = M(\bm v) \left( 1 + \frac{a + \bm b \cdot \bm v + c |\bm v|^2}{N} \right)^N,
		\quad \bm v \in \mathbb R^3,
	\]
	for $a, c \in \mathbb R$ and $\bm b \in \mathbb R^3$.
\end{corollary}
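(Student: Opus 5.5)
The plan is to use \cref{thm:dissipation-approximate-collision} in both directions, together with the functional equation \eqref{eq:functionalEquationPhi_N}. First we fix the explicit form of $\varphi_N'$. Differentiating \eqref{eq:phiNkullb} gives $\varphi_N'(z) = N(z^{1/N} - 1)$, with the odd-root convention for $z<0$. This map is a bijection of $\mathbb R$ onto $\mathbb R$ with inverse $g \mapsto (1 + g/N)^N$. Consequently $\varphi_N'(f/M) = a + \bm b\cdot\bm v + c|\bm v|^2$ holds if and only if $f = M(1 + (a+\bm b\cdot\bm v + c|\bm v|^2)/N)^N$. The corollary therefore reduces to showing that $\mathcal C_N(f)=0$ if and only if $\varphi_N'(f/M)\in\mathscr I$.

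For the direction ``$\mathcal C_N(f)=0$ implies approximate Maxwellian'', we take $\psi=\varphi_N'(f/M)$ as the test function. Then $\int \varphi_N'(f/M)\,\mathcal C_N(f)\,\dd\bm v = 0$, so the equality case of \cref{thm:dissipation-approximate-collision} forces $\varphi_N'(f/M)$ to be a collision invariant. We must check that this test function is admissible for $f\in\mathcal D(\mathcal C)$. Since the integrand vanishes identically, the integral is $0$ whenever it is defined. The integrability conditions are whatever the dissipation theorem already assumes, and we simply invoke them.

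For the converse, let $h = f/M$ with $\varphi_N'(h)\in\mathscr I$. By \eqref{eq:functionalEquationPhi_N},
\[
\varphi_N'\big(P_N(\acute h,\acute h_*)\big) = \varphi_N'(\acute h)+\varphi_N'(\acute h_*) = \varphi_N'(h)+\varphi_N'(h_*) = \varphi_N'\big(P_N(h,h_*)\big).
\]
The middle equality is the collision-invariance property \eqref{eq:particle_conserv} applied to $\psi=\varphi_N'(h)$. Injectivity of $\varphi_N'$ then gives $P_N(\acute h,\acute h_*) = P_N(h,h_*)$ pointwise. Hence the bracket in the definition of $\mathcal C_N$ vanishes identically, and so does $\mathcal C_N(f)$. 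This direction needs no integral identity, only the pointwise cancellation.

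The main obstacle is the first direction: justifying that $\varphi_N'(f/M)$ may serve as a test function, and handling sign changes of $f$. Both concerns are covered by the hypotheses of \cref{thm:dissipation-approximate-collision}, whose equality characterization we take as given. Note also that $\mathscr I$ is written as $a+\bm b\cdot\bm v+c|\bm v|^2$, which matches the stated form up to relabeling the constants.
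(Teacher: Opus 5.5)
Your proof is correct and takes the same route as the paper. The paper gives no separate proof and treats the corollary as an immediate consequence of the equality case in \cref{thm:dissipation-approximate-collision}, combined with the explicit inverse $g \mapsto (1+g/N)^N$ of $\varphi_N'$; your converse is the pointwise identity $P_N(\acute h,\acute h_*) = P_N(h,h_*)$, obtained from \eqref{eq:functionalEquationPhi_N} and injectivity of $\varphi_N'$, which is exactly what that theorem's equality argument rests on (your caveat about admissibility of $\varphi_N'(f/M)$ as a test function is a looseness shared with the paper, not a gap specific to your argument).
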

So far, we have shown that the sequence of approximate collision operators agrees qualitatively with the bilinear collision operator.
We now extend our study to the quantitative relation between $\mathcal C_N$ and $\mathcal C$.
First, we show that the linearization of any approximate collision operator around $M$ agrees with the linearization of $\mathcal C$ around $M$.

\begin{theorem}\label{thm:linearization-approximate-collision}
	The linearization of $\mathcal C_N$ around $M$ is independent of $N$ and agrees with the linearized collision operator $\mathcal L$ around $M$,
	\[
		\mathcal L g(\bm v) = \int_{\mathbb R^3} \int_{\mathbb S^2} B(\bm v - \bm v_*, \bm \sigma) \big(M(\acute{\bm v}) g(\acute{\bm v}_*) + M(\acute{\bm v}_*) g(\acute{\bm v}) - M(\bm v) g(\bm v_*) - M(\bm v_*) g(\bm v)\big)
		\, \dd \bm \sigma \, \dd \bm v_*, \quad \bm v \in \mathbb R^3.
	\]
\end{theorem}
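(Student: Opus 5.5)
We compute the Gateaux derivative of $\mathcal C_N$ at $f = M$ in a direction $g$, that is, $\frac{\dd}{\dd\epsilon}\mathcal C_N(M+\epsilon g)\big|_{\epsilon=0}$. The argument rests on the first-order expansion of the approximate product $P_N$ around the point $(1,1)$.

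First, note that $P_N(1,1) = (1+1-1)^N = 1$. Near $z=1$ the odd root $z \mapsto z^{1/N}$ is smooth, with derivative $1/N$ at $z=1$. Hence $P_N$ is differentiable at $(1,1)$, and
\[
\partial_{z_1}P_N(1,1) = N\,(1)^{N-1}\cdot \tfrac1N = 1,
\qquad \partial_{z_2}P_N(1,1) = 1 .
\]
This gives $P_N(1+\epsilon a, 1+\epsilon b) = 1 + \epsilon(a+b) + o(\epsilon)$, independently of $N$. This matches the exact product, since $(1+\epsilon a)(1+\epsilon b) = 1+\epsilon(a+b)+O(\epsilon^2)$.

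Next, insert $f = M+\epsilon g$, so that $f/M = 1+\epsilon g/M$ pointwise. The derivative of the bracket in $\mathcal C_N$ is
\[
\frac{\acute g}{\acute M} + \frac{\acute g_*}{\acute M_*} - \frac{g}{M} - \frac{g_*}{M_*},
\]
multiplied by $MM_*$. Energy conservation $|\acute{\bm v}|^2+|\acute{\bm v}_*|^2 = |\bm v|^2+|\bm v_*|^2$, together with momentum conservation, gives $MM_* = \acute M\acute M_*$. Therefore
\[
MM_*\frac{\acute g}{\acute M} = \acute M_*\,\acute g,
\qquad
MM_*\frac{\acute g_*}{\acute M_*} = \acute M\,\acute g_*,
\qquad
MM_*\frac{g}{M} = M_*\,g,
\qquad
MM_*\frac{g_*}{M_*} = M\,g_* .
\]
Substituting these yields exactly the integrand of $\mathcal L g$, and the result is visibly independent of $N$. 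The same computation applied to $\mathcal C$ itself (bilinear, with $\acute M\acute M_* = MM_*$) produces the same expression, which identifies $\mathcal L$ as the linearized Boltzmann operator.

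The main obstacle is justifying that the derivative may be taken inside the collision integral. The difficulty is that the $o(\epsilon)$ remainder of $P_N$ is not uniform in $\bm v$: where $g/M$ is large, $1+\epsilon g/M$ can leave any neighbourhood of $1$, and the odd root is not smooth near $0$. I would handle this in two ways.
\begin{itemize}
\item \textbf{Formal or weak sense.} Interpret the linearization as the derivative of the weak form against polynomial test functions $\psi$. Restrict to directions $g$ with $g/M$ bounded, or Schwartz-type with $|g|\le C M$, which is dense in the relevant space. Then for small $\epsilon$ the arguments stay in a fixed compact neighbourhood of $1$, where $P_N$ is $C^1$ with bounded derivatives.
\item \textbf{Passing the limit.} Use the mean value theorem to dominate the difference quotients by $C\,|B|\,MM_*\,(|g/M|+\dots)$. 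Integrability of this bound follows from $|\bm v-\bm v_*|^\lambda$ against Gaussian weights, with $\lambda>-3$ and $b$ integrable. Dominated convergence then exchanges derivative and integral.
\end{itemize}
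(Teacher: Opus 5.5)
Your proposal is correct and follows the paper's route: both arguments rest on $P_N(1,1)=1$ and $\nabla P_N(1,1)=(1,1)^{\top}$, which makes the first-order term independent of $N$. The differences are minor: you linearize the strong form and use $\acute M\acute M_*=MM_*$ to recover the integrand of $\mathcal L$, whereas the paper expands only the pre-collisional product $MM_*P_N(f/M,f_*/M_*)$ inside the weak form and uses $\int_{\mathbb R^3}\psi\,\mathcal C(M)\,\dd\bm v=0$; in addition, your dominated-convergence argument for directions with $|g|\le CM$ justifies the $o(s)$ remainder, which the paper treats only formally.
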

In particular, since $\mathcal C_1$ is linear, we conclude that $\mathcal C_1 = \mathcal L$.
Our next main result demonstrates that the full sequence of operators recovers $\mathcal C$ in the limit.
\begin{theorem}\label{thm:convergence-approximate-collision}
	The sequence of approximate collision operators converges weakly to $\mathcal C$;
	namely for all $f \in \mathscr{D}(\mathcal C)$ with $f > 0$ almost everywhere
	and appropriate test functions $\psi: \mathbb R^3 \to \mathbb R$,
	\[
		\lim_{N \to \infty} \int_{\mathbb R^3} \psi \, \mathcal C_N(f) \, \dd \bm v = \int_{\mathbb R^3} \psi \, \mathcal C(f) \, \dd \bm v.
	\]
\end{theorem}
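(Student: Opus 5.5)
The plan is to prove pointwise convergence of the approximate product and then pass to the limit under the integral by dominated convergence. Write $z_1 = f/M$ and $z_2 = f_*/M_*$, both positive almost everywhere since $f>0$. For $z>0$ we have $z^{1/N} = \exp(\ln z / N) = 1 + \ln z/N + O(N^{-2})$, hence
\[
	z_1^{1/N} + z_2^{1/N} - 1 = 1 + \frac{\ln(z_1 z_2)}{N} + O(N^{-2}).
\]
The standard limit $(1 + a/N + o(1/N))^N \to e^a$ then gives $P_N(z_1,z_2) \to z_1 z_2$. Consequently $M M_* P_N(f/M, f_*/M_*) \to f f_*$ pointwise. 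The same argument applies at the post-collisional velocities, using $\acute M \acute M_* = M M_*$, which holds by energy and momentum conservation \eqref{eq:particle_conserv}. So the integrand of $\mathcal C_N$ converges pointwise to that of $\mathcal C$.

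To pass to the limit in the weak form, I would first symmetrize, exactly as in \eqref{eq:symm_coll}. Using the measure invariances \eqref{eq:invar_trans} together with $MM_* = \acute M\acute M_*$,
\[
	\int \psi\, \mathcal C_N(f)\,\dd\bm v = -\frac14 \iiint B\, M M_* \Big[P_N\big(\tfrac{\acute f}{\acute M},\tfrac{\acute f_*}{\acute M_*}\big) - P_N\big(\tfrac{f}{M},\tfrac{f_*}{M_*}\big)\Big](\acute\psi + \acute\psi_* - \psi - \psi_*)\,\dd\bm\sigma\,\dd\bm v\,\dd\bm v_*.
\]
Alternatively, one can keep the unsymmetrized form and split the gain and loss terms, changing variables in the gain term. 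In both cases the task reduces to the convergence of
\[
	\iiint B\, M M_*\, P_N\big(\tfrac{f}{M},\tfrac{f_*}{M_*}\big)\, \Psi \,\dd\bm\sigma\,\dd\bm v\,\dd\bm v_*
\]
toward the same integral with $f f_*$ in place of $MM_*P_N$, where $\Psi$ is of polynomial growth.

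The key step, and the main obstacle, is finding an $N$-independent integrable majorant. I would use an AM–GM/Jensen-type bound. For $a,b>0$ we have $a+b-1 \le a+b$, so
\[
	0 < a+b-1 \le 2\cdot\tfrac{a+b}{2}.
\]
By convexity of $x\mapsto x^N$,
\[
	\Big(\tfrac{a+b}{2}\Big)^N \le \tfrac12\big(a^N + b^N\big).
\]
With $a = z_1^{1/N}$ and $b = z_2^{1/N}$ this gives, whenever the base is positive,
\[
	|P_N| \le 2^{N-1}(z_1 + z_2),
\]
which grows exponentially in $N$ and is therefore useless. I would instead seek a bound of the form $|P_N(z_1,z_2)| \le C\max(z_1,z_2,1)^{2}$, or at least a bound by $z_1 z_2 + z_1 + z_2 + 1$. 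The reasoning is as follows. When $z_1,z_2 \ge 1$, Bernoulli-type monotonicity in $N$ should give $P_N \le z_1 z_2$, since $(z_1^{1/N}+z_2^{1/N}-1)^N$ is increasing to its limit. When one argument is small, the bracket satisfies $|\cdot| \le \max(a,b)$, so $|P_N| \le \max(z_1,z_2)$. I would try to verify this monotonicity via the log-convexity of $t\mapsto (z_1^t + z_2^t - 1)^{1/t}$, that is, generalized-mean arguments.

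Given such a bound, the majorant becomes
\[
	B\,\big(ff_* + f M_* + M f_* + M M_*\big)\,|\Psi|.
\]
This is integrable by the assumption $f\in\mathscr D(\mathcal C)\subset L^1_2$, provided the test functions $\psi$ are "appropriate", meaning of growth compatible with the available moments of $f$ and with the kernel weight $|\bm v-\bm v_*|^\lambda$. The integrability of $B\,\cdot$ these terms also requires a Grad cutoff assumption $b\in L^1$, which is assumed. Dominated convergence then yields the claim.
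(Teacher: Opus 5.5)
Your strategy is the same as the paper's. You prove pointwise convergence $P_N(z_1,z_2)\to z_1z_2$ via $z^{1/N}=1+\ln z/N+o(1/N)$ (\cref{lem:ConvergenceP_N}), find an $N$-independent majorant, and apply dominated convergence to the first weak form in \eqref{def:IncompleteWeak}, where only the pre-collisional $P_N$ appears. Your target majorant $(f+M)(f_*+M_*)$ equals, up to a factor $4$, the paper's $\max\{f,M\}\max\{f_*,M_*\}$ from \cref{lem:BoundP_N}.

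The gap is that you never establish this bound, which is the only nontrivial ingredient, and part of your sketch for it is wrong.
\begin{itemize}
\item For $z_1,z_2\ge1$ you rely on a monotonicity in $N$ that you only hope to verify. The missing observation is the identity $ab-(a+b-1)=(a-1)(b-1)\ge0$ for $a=z_1^{1/N}$, $b=z_2^{1/N}\ge1$. It gives $1\le P_N\le(ab)^N=z_1z_2$ directly. The monotonicity does hold in this regime: with $A=z_1^t$, $B=z_2^t$, $t=1/N$, the condition $\frac{d}{dt}\big[\tfrac1t\ln(z_1^t+z_2^t-1)\big]\le0$ reduces to $A\ln A+B\ln B\le(A+B-1)\ln(A+B-1)$, which follows from convexity of $x\ln x$ because $1\le\min(A,B)\le\max(A,B)\le A+B-1$. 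But this is a detour.
\item Your claim $|a+b-1|\le\max(a,b)$ is correct when exactly one of $z_1,z_2$ is below $1$. It is false when both are, since $a+b-1\to-1$ as $a,b\to0$. In that case you need the separate bound $|a+b-1|\le1$, i.e.\ $|P_N|\le1$, which is what the $MM_*$ term of your majorant absorbs.
\end{itemize}

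Also drop the alternative bound $C\max(z_1,z_2,1)^2$, which you present as the preferred option. It does not give an integrable majorant, because $MM_*(f/M)^2=M_*f^2/M$ need not be integrable in $\bm v$; take $f$ a Maxwellian with at least twice the temperature of $M$. Dominated convergence works here precisely because of the product structure $\max\{z_1,1\}\max\{z_2,1\}$. Multiplied by $MM_*$, it splits into a function of $\bm v$ times a function of $\bm v_*$, each bounded by $f+M$.
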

We conclude this section with a numerical study of the dissipation properties in the spatially homogeneous setting.
In this case, the Boltzmann equation simplifies to
\[
	\partial_t f = \mathcal C(f),
\]
subject to the initial condition $f_0 : \mathbb R^3 \to (0, \infty)$,
\[
	f(0, \cdot) = f_0.
\]
For the numerical solution we employ the Galerkin discretization of \cref{sec:MomSys}:
writing $f \approx \beta_N(\bm \lambda \cdot \bm m)$ and projecting the residual onto
$\operatorname{span} \bm m$ yields the ODE system \eqref{eq:moments-galerkin-pde} with
$\nabla_{\bm x} \equiv 0$, here with the collision term evaluated with the full operator $\mathcal C$,
\[
	\bm s(\bm \lambda) = \int_{\mathbb R^3} \mathcal C\big( \beta_N(\bm \lambda \cdot \bm m) \big) \bm m \, \dd \bm v.
\]
In the discrete formulation, we cannot guarantee the dissipation of Boltzmann's H functional, as, in general,
the logarithm of $\beta_N(\bm \lambda \cdot \bm m)$ cannot be expressed as a linear combination of the test
functions $\bm m$.
However, by construction of the approximate logarithm $\varphi_N'$, it holds
\[
	\varphi_N'\big( \beta_N(\bm \lambda \cdot \bm m) / M \big) = \bm \lambda \cdot \bm m.
\]
Thus, we can numerically study the dissipation properties of the approximate logarithm $\varphi_N'$
by forming
\[
	\int_{\mathbb R^3} \mathcal C\big( \beta_N(\bm \lambda \cdot \bm m) \big) \varphi_N'\big(
	\beta_N(\bm \lambda \cdot \bm m) / M
	\big) \, \dd \bm v
	= \int_{\mathbb R^3} \mathcal C\big( \beta_N(\bm \lambda \cdot \bm m) \big) \bm \lambda \cdot \bm m
	\, \dd \bm v
	= \bm s(\bm \lambda) \cdot \bm \lambda.
\]
The approximate collision operators $\mathcal C_N$ dissipate the approximate entropy $\varphi_N'$ and converge weakly to $\mathcal C$ by \cref{thm:convergence-approximate-collision}, while $\varphi_N'$ converges to the logarithmic function as $N \to \infty$ by \eqref{eq:convergence_phi_n}.
Combining these results, we expect the entropy rate $\dot\eta = \bm s(\bm \lambda) \cdot \bm \lambda$ to be nonpositive for $N$ large enough, with the magnitude of any entropy violation tending to $0$ as $N$ increases.

To demonstrate the control that the parameter $N$ provides over the violation of the $\varphi$-divergence production inequality \eqref{eq:phidiss}, we consider the spatially homogeneous setting of \eqref{eq:Boltzmann} and compute $\dot\eta$ for multiple values of $N$ with a fixed
polynomial space $\bm m$ containing all three-dimensional polynomials of at most total degree four.
We begin with a parametrized initial condition,
\begin{equation}\label{eq:homogeneous-initial}
	f_0(\bm v) = \frac{(3 / 5 \theta_\infty)^{-1}}{(6 / 5 \pi \theta_\infty)^{3/2}}
	v_1^2 \exp\left( - \frac{|\bm v|^2}{6 / 5 \theta_\infty} \right), \quad \bm v \in \mathbb R^3,
\end{equation}
where the scaling of the temperature ensures that the distribution function relaxes to a Maxwellian
of unit density, zero mean velocity and temperature $\theta_\infty$.
We compute the maximum entropy rate
\[
\dot\eta_\infty = \max_{t \in [0, \infty)} \dot\eta(t)
	\]
	for three choices of $N$, namely $N = 1, 3, 5$, over a range of the parameter $\theta_\infty$ in the initial condition; the corresponding entropy-rate evolutions are shown in
	\cref{fig:entropy_rate_matching,fig:entropy_small_deviation,fig:entropy_large_deviation}.
	For $\theta_\infty = 1$, that is, when the final distribution matches the choice of the background in $\beta_N$,
	the entropy rate stays non-positive throughout, as confirmed by
	\cref{fig:entropy_rate_matching}, where $\dot\eta$ begins with
	negative values and approaches $0$ exponentially fast.
	Owing to finite projection errors and solver tolerances, we do not reach $0$ exactly but values
	around $10^{-10}$.
	While $\dot\eta_\infty$ quickly diverges for $N = 1$ as $\theta_\infty$ increases,
	using a nonlinear closure substantially reduces the violation of the entropy inequality.
	Typically, we observe a reduction by more than one order of magnitude for the same $\theta_\infty$,
	as seen in \cref{fig:entropy_small_deviation,fig:entropy_large_deviation}.
	Going from $N = 3$ to $N = 5$ improves the results further, yielding a reduction by a factor
	of $4.5$ uniformly over the full range of $\theta_\infty$,
	compared to a reduction by a factor of $25$ for small $\theta_\infty$ and $30$ for large $\theta_\infty$
	between $N = 1$ and $N = 3$.
	\begin{figure}[htp]
		\centering
		\begin{subfigure}[b]{0.3\textwidth}
			\begin{tikzpicture}[scale=.6]
				\begin{axis}
					[
						xlabel = {Time $t$},
						ylabel = {Entropy rate $\dot\eta$},
						legend pos = south east,
						ymin = -0.05,
						xmax = 15,
					]
					\addplot+[] table {data/entropy-n1-k4-1.0.dat};
					\addlegendentry{$N = 1$}
					\addplot+[] table {data/entropy-n3-k4-1.0.dat};
					\addlegendentry{$N = 3$}
					\addplot+[] table {data/entropy-n5-k4-1.0.dat};
					\addlegendentry{$N = 5$}
				\end{axis}
			\end{tikzpicture}
			\caption{Entropy rate with matching background, $\theta_\infty = 1$.}
			\label{fig:entropy_rate_matching}
		\end{subfigure}
		\hfill
		\begin{subfigure}[b]{0.3\textwidth}
			\begin{tikzpicture}[scale=.6]
				\begin{axis}
					[
						xlabel = {Time $t$},
						ylabel = {Entropy rate $\dot\eta$},
						legend pos = south east,
						restrict y to domain = -0.00007:0.0001,
						xmax = 15,
					]
					\addplot+[] table {data/entropy-n1-k4-1.125.dat};
					\addplot+[] table {data/entropy-n3-k4-1.125.dat};
					\addplot+[] table {data/entropy-n5-k4-1.125.dat};
				\end{axis}
			\end{tikzpicture}
			\caption{Entropy rate with small deviation from the background, $\theta_\infty = 1.125$.}
			\label{fig:entropy_small_deviation}
		\end{subfigure}
		\hfill
		\begin{subfigure}[b]{0.3\textwidth}
			\begin{tikzpicture}[scale=.6]
				\begin{axis}
					[
						xlabel = {Time $t$},
						ylabel = {Entropy rate $\dot\eta$},
						legend pos = south east,
						ymin = -0.0005,
						xmax = 15,
					]
					\addplot+[] table {data/entropy-n1-k4-1.5.dat};
					\addplot+[] table {data/entropy-n3-k4-1.5.dat};
					\addplot+[] table {data/entropy-n5-k4-1.5.dat};
				\end{axis}
			\end{tikzpicture}
			\caption{Entropy rate with large deviation from the background, $\theta_\infty = 1.5$.}
			\label{fig:entropy_large_deviation}
		\end{subfigure}
		\caption{Entropy rates for different values of $\theta_{\infty}$ in the initial condition \eqref{eq:homogeneous-initial}.}
	\end{figure}
	\section{Symmetric Dissipative Moment System Hierarchies}
	\label{sec:MomSys}
	In this section we demonstrate that in the finite-dimensional setting, the moment equations \eqref{eq:moments} with renormalization maps \eqref{eq:phi_map} are \textit{symmetric-dissipative hyperbolic} in the sense of \cite{kawashima2004}, and hence well-posed.

	To formulate the closed finite-dimensional moment equations, we consider Galerkin approximations of solutions of~\eqref{eq:moments} where $f = \beta_N(\bm \lambda \cdot \bm m)$
	for time- and space-dependent coefficients $\bm \lambda$ and a vector of polynomials $\bm m$
	constituting a basis of $\mathscr M$, a test space of dimension $m \in \mathbb N$ that includes the collision invariants;
	throughout, $K$ denotes the maximal polynomial degree of the basis $\bm m$.
	The Galerkin approximation of~\eqref{eq:moments} then reads
	\begin{subequations}\label{eq:moments-galerkin}
		\begin{equation}\label{eq:moments-galerkin-pde}
			\bm A_0(\bm \lambda) \partial_t \bm \lambda + \sum_{i = 1}^3 \bm A_i(\bm \lambda) \partial_{x_i} \bm \lambda = \bm s_N(\bm \lambda)
		\end{equation}
		in $(0, T) \times \mathbb R^3$, subject to the initial condition
		\begin{equation}
			\bm \lambda(0, \cdot) = \bm \lambda_0, \quad \text{in } \mathbb R^3,
		\end{equation}
	\end{subequations}
	where the matrices $\bm A_0, \dots, \bm A_3$ are given by
	\begin{gather}
		\bm A_0(\bm \lambda) = \int_{\mathbb R^3} \beta_N'(\bm \lambda \cdot \bm m) \bm m \, \bm m^{\top} \, \dd \bm v, \\
		\bm A_i(\bm \lambda) = \int_{\mathbb R^3} v_i \beta_N'(\bm \lambda \cdot \bm m) \bm m \, \bm m^{\top} \, \dd \bm v, \quad i=1,2,3,
	\end{gather}
	the source term $\bm s_N$ reads
	\begin{equation}
		\bm s_N(\bm \lambda) = \int_{\mathbb R^3} \bm m \mathcal C_N\big( \beta_N(\bm \lambda \cdot \bm m) \big) \, \dd \bm v,
	\end{equation}
	and $\bm \lambda_0$ satisfies
	\begin{equation}
		\int_{\mathbb R^3} \bm m \big( \beta_N(\bm \lambda_0 \cdot \bm m) - f_0 \big) \, \dd \bm v = 0, \label{eq:moment-initial}
	\end{equation}
	for a given initial distribution $f_0$ and all $\bm x \in \mathbb R^3$.
	Owing to our choice of $\beta_N$ with $N$ odd, the matrix $\bm A_0$ is symmetric and positive definite.
	Furthermore, the advection matrices $\bm A_1, \dots, \bm A_3$ are symmetric.
	Hence, the system (excluding the source) is symmetric hyperbolic, implying finite speed of propagation.
	We conclude this section by establishing sufficient conditions on the renormalization map $\beta_N$ for the well-posedness of~\eqref{eq:moments-galerkin}.
	In particular, we prove that the system~\eqref{eq:moments-galerkin-pde} is symmetric dissipative in the sense of Kawashima and Yong~\cite{kawashima2004},
	which implies that~\eqref{eq:moments-galerkin} is linearly well-posed.
	Moreover, under suitable conditions on the initial data, local-in-time existence of solutions can be established.
	To simplify the presentation we assume, without loss of generality, that the first five components of $\bm m$ form a basis of the space of collision invariants.
	\begin{definition}
		A system of $m \in \mathbb N$ first-order partial differential equations
		\begin{equation}
			\bm B_0(\bm u) \partial_t \bm u + \sum_{i=1}^3 \bm B_i(\bm u) \partial_{x_i} \bm u = \bm s(\bm u),
		\end{equation}
		posed for functions $u$ with open, convex codomain $\mathcal U \subseteq \mathbb R^m$ and
		\begin{equation}
			\mathcal N = \big\{ \bm \psi \in \mathbb R^m: \forall \bm u \in \mathcal U: \bm \psi^{\top} \bm s(\bm u) = 0 \big\}
		\end{equation}
		is called symmetric dissipative if
		\begin{enumerate}
			\item $\bm B_0(\bm u)$ is symmetric positive definite;
			\item $\bm B_i(\bm u)$, $i=1,2,3$, are symmetric;
			\item $\bm s(\bm u) = 0$ if and only if $\bm u \in \mathcal N$; and
			\item the linearization $\bm \nabla \bm s(\bm u)$ in $\bm u \in \mathcal N$ is symmetric and nonpositive definite, and its null space equals $\mathcal N$.
		\end{enumerate}
	\end{definition}
	\begin{theorem}
		The system~\eqref{eq:moments-galerkin-pde} is symmetric dissipative with
		\begin{equation}
			\mathcal N = \{ \bm \psi \in \mathbb R^m : \psi_k = 0, ~k > 5 \}
		\end{equation}
		and
		\begin{equation}
			\mathcal U = \{ \bm \lambda \in \mathbb R^m : \lambda_1 >  -N \}
		\end{equation}
		for $N > 1$ and $\mathcal U = \mathbb R^m$ for $N = 1$.
	\end{theorem}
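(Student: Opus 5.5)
The plan is to verify the four defining properties of a symmetric dissipative system one at a time for $\bm B_0=\bm A_0$, $\bm B_i=\bm A_i$ and $\bm s=\bm s_N$. Throughout, write $g=\bm\lambda\cdot\bm m$ and $f=\beta_N(g)=M(1+g/N)^N$. The key identity, from the sign convention for odd roots, is
\[
(f/M)^{1/N}=1+g/N .
\]
This gives $\varphi_N'(f/M)=g$ and, more importantly,
\[
P_N\Big(\frac{f}{M},\frac{f_*}{M_*}\Big)=\Big(1+\frac{g+g_*}{N}\Big)^N .
\]
Hence the collision term only involves polynomials in the pre- and post-collisional velocities. We assume, as indicated before the statement, that the first five entries of $\bm m$ span $\mathscr I$ with $m_1=1$.

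\textbf{Properties 1 and 2.} Symmetry of $\bm A_0,\dots,\bm A_3$ is immediate from the integrand $\bm m\bm m^\top$. For definiteness, $\bm\xi^\top\bm A_0\bm\xi=\int M(1+g/N)^{N-1}(\bm\xi\cdot\bm m)^2\,\dd\bm v$. Since $N-1$ is even, the weight is nonnegative. It vanishes only on the zero set of the polynomial $1+g/N$, which is a null set unless $g\equiv -N$. For $N=1$ the weight is just $M>0$. For $N>1$ the case $g\equiv -N$ is excluded by $\bm\lambda\in\mathcal U$. The integral is therefore positive whenever $\bm\xi\cdot\bm m\not\equiv0$, i.e.\ whenever $\bm\xi\neq0$, because $\bm m$ is a basis.

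\textbf{Property 3.} If $\bm\lambda\in\mathcal N$, then $g\in\mathscr I$ and $f$ is an approximate Maxwellian. \Cref{cor:kernel-approximate-collision} then gives $\mathcal C_N(f)=0$, hence $\bm s_N(\bm\lambda)=0$; here we use that $f$, a polynomial times a Maxwellian, lies in $\mathscr D(\mathcal C)$. Conversely, if $\bm s_N(\bm\lambda)=0$, then
\[
0=\bm\lambda\cdot\bm s_N(\bm\lambda)=\int\varphi_N'(f/M)\,\mathcal C_N(f)\,\dd\bm v .
\]
The equality case of \cref{thm:dissipation-approximate-collision} then says that $g$ is a collision invariant. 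Since $\bm m$ is a basis whose first five entries span $\mathscr I$, this forces $\lambda_k=0$ for $k>5$.

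\textbf{Property 4.} I would first symmetrize $\bm s_N$ exactly as in \eqref{eq:symm_coll}, using the invariances \eqref{eq:invar_trans} together with the invariance of $MM_*$ under collisions. Writing $\Delta\psi=\acute\psi+\acute\psi_*-\psi-\psi_*$, this gives
\[
\bm s_N(\bm\lambda)=-\tfrac14\int\!\!\int\!\!\int B\,MM_*\Big[\Big(1+\tfrac{\acute g+\acute g_*}{N}\Big)^N-\Big(1+\tfrac{g+g_*}{N}\Big)^N\Big]\Delta\bm m\,\dd\bm\sigma\,\dd\bm v\,\dd\bm v_* .
\]
Differentiating at $\bm\lambda\in\mathcal N$ in the direction $\bm\xi$, and using $\acute g+\acute g_*=g+g_*=:G$ there, should yield
\[
\bm\eta^\top\nabla\bm s_N(\bm\lambda)\bm\xi=-\tfrac14\int\!\!\int\!\!\int B\,MM_*\Big(1+\tfrac GN\Big)^{N-1}\Delta(\bm\xi\cdot\bm m)\,\Delta(\bm\eta\cdot\bm m)\,\dd\bm\sigma\,\dd\bm v\,\dd\bm v_* .
\]
This expression is visibly symmetric in $\bm\xi,\bm\eta$. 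The quadratic form is nonpositive because the weight is an even power. It vanishes exactly when $\Delta(\bm\xi\cdot\bm m)=0$ almost everywhere on the support of the weight. By \eqref{eq:particle_conserv} that happens iff $\bm\xi\cdot\bm m\in\mathscr I$, i.e.\ iff $\bm\xi\in\mathcal N$. For $N=1$ this is simply the symmetrized form of $\mathcal L$ from \cref{thm:linearization-approximate-collision}.

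\textbf{Main obstacle.} I expect the difficulty to lie in the details of Property 4. First, differentiation under the integral must be justified; this needs polynomial growth against $MM_*$ and integrability of $|\bm v-\bm v_*|^\lambda b$, which I would settle by dominated convergence. Second, one must ensure the weight $(1+G/N)^{N-1}$ vanishes only on a null set. Its zero set is that of a nonzero polynomial in $(\bm v,\bm v_*)$ unless $G\equiv -N$, and $\mathcal U$ should exclude that case; I would check that $\lambda_1>-N$ suffices and otherwise shrink $\mathcal U$ accordingly. Third, the pointwise identity $\Delta\psi=0$ must be upgraded from almost everywhere in $(\bm v,\bm v_*,\bm\sigma)$ on $\{b\neq0\}$ to $\psi\in\mathscr I$. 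For polynomial $\psi$ this follows by continuity, provided $b$ is not identically zero.
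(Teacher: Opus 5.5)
Your route is the paper's: the same four checks, the same positivity argument for $\bm A_0$, and the same symmetrized Jacobian with weight $MM_*(1+G/N)^{N-1}$.

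\textbf{The weight degeneracy.} The worry in your last paragraph is justified, and the check you propose fails. For $N>1$ take $\bm\lambda=-\tfrac N2\bm e_1$. It lies in $\mathcal N$, and since $-N/2>-N$ it also lies in $\mathcal U$. Then $g\equiv -N/2$, so $1+G/N\equiv 0$ and the weight vanishes identically.

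You can see this directly. Put $h=\bm\xi\cdot\bm m$ and $f_\epsilon=\beta_N((\bm\lambda+\epsilon\bm\xi)\cdot\bm m)$. Then $P_N(f_\epsilon/M,(f_\epsilon)_*/M_*)=\epsilon^N(h+h_*)^N/N^N$, and the same holds at the post-collisional velocities. Hence $\bm s_N(\bm\lambda+\epsilon\bm\xi)=O(\epsilon^N)$ and $\nabla\bm s_N(\bm\lambda)=0$. Its null space is $\mathbb R^m\neq\mathcal N$, so property 4 fails at this point. For $N>1$ the theorem with $\mathcal U=\{\lambda_1>-N\}$ is therefore false as stated.

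The paper's proof has the same hole. It writes down this Jacobian and reads off its kernel without examining the weight.

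Your proposed remedy is the right one: for $N>1$ take $\mathcal U=\{\bm\lambda:\lambda_1>-N/2\}$.
\begin{itemize}
\item This set is still open, convex, and contains $\bm 0$.
\item It excludes both degenerate points: $-N\bm e_1$, where $\bm A_0=0$, and $-\tfrac N2\bm e_1$.
\item For $\bm\lambda\in\mathcal N\cap\mathcal U$, the factor $1+(g+g_*)/N$ is then a nonzero polynomial in $(\bm v,\bm v_*)$. It vanishes only on a null set, so your kernel argument goes through.
\end{itemize}
The case $N=1$ is unaffected, since there the weight is $1$.

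\textbf{The identification of $\mathcal N$.} You verify properties 3 and 4 with $\mathcal N$ taken to be the asserted set. In the definition, however, $\mathcal N$ is the annihilator $\{\bm\psi:\bm\psi^\top\bm s_N(\bm\lambda)=0\ \forall\bm\lambda\in\mathcal U\}$. Identifying it is part of the claim, and your plan omits it.

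The asserted set is contained in the annihilator by \cref{thm:conservation-approximate-collision}. For the reverse inclusion the paper uses a shift:
\begin{itemize}
\item Replace $\bm\psi$ by $\bm\psi+c\bm e_1\in\mathcal U$. This is still an annihilator because $m_1=1$ is conserved.
\item Evaluate at $\bm\lambda=\bm\psi+c\bm e_1$ and invoke the equality case of \cref{thm:dissipation-approximate-collision}.
\end{itemize}
This is the same device you already use for the converse of property 3.

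Alternatively, differentiate $\bm\psi^\top\bm s_N(\bm\lambda)=0$ at $\bm\lambda=\bm 0$. By symmetry of $\nabla\bm s_N(\bm 0)$, $\bm\psi$ lies in its kernel. Your kernel computation applies there without difficulty, since the weight equals $1$.
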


	\begin{proof}
		It is evident that the matrices $\bm A_0(\bm \lambda), \dots, \bm A_3(\bm \lambda)$ are symmetric.
		To prove positive definiteness of $\bm A_0$, let $\bm \lambda \in \mathcal U$.
		Since $N$ is odd, $\beta_N'(g) = M \left( 1 + g/N \right)^{N - 1}$ is nonnegative, so for all $\bm b \in \mathbb R^m$,
		\begin{equation}
			\bm b^{\top} \bm A_0(\bm \lambda) \bm b
			= \int_{\mathbb R^3} \beta_N'(\bm \lambda \cdot \bm m)
			\big( \bm b \cdot \bm m \big)^2 \, \dd \bm v \geq 0,
		\end{equation}
		with equality if and only if the integrand vanishes;
		in that case the polynomial $\bm b \cdot \bm m$ vanishes on the open set
		$P = \{ \bm v \in \mathbb R^3 : \beta_N'(\bm \lambda \cdot \bm m)(\bm v) > 0 \}$,
		which is nonempty by assumption on $\bm \lambda$,
		so $\bm b \cdot \bm m$ vanishes identically on $\mathbb R^3$ and hence $\bm b = 0$.
		Next, we show that $\mathcal N$ is given as asserted.
		To that end, let $\bm \psi \in \mathbb R^m$ with
		\begin{equation}
			\bm \psi^{\top} \bm s_N(\bm \lambda) = 0
		\end{equation}
		for all $\bm \lambda \in \mathcal U$.
		Without loss of generality, we can assume that $\bm \psi \in \mathcal U$,
		since we can replace $\bm \psi$ by $\tilde{\bm \psi} = \bm \psi + c \bm e_1$
		for $c > 0$ sufficiently large.
		In particular, we have
		\[
			0 = \bm \psi^{\top} \bm s_N(\bm \psi)
			= \int_{\mathbb R^3} \varphi_N'\left( \frac{\beta_N(\bm \psi \cdot \bm m)}{M} \right) \mathcal
			C_N\big( \bm \psi \cdot \bm m \big) \, \dd \bm v,
		\]
		which, by \cref{thm:dissipation-approximate-collision},
		implies that $\bm \psi \cdot \bm m$ is a collision invariant, that is, $\psi_k = 0$ for $k > 5$.
		From \cref{cor:kernel-approximate-collision} it follows that
		$\bm s_N(\bm \lambda) = 0$ if and only if $\bm \lambda \cdot \bm m$
		is a collision invariant and hence $\bm \lambda \in \mathcal N$.
		Lastly, we compute the linearization of $\bm s_N$.
		Differentiating the weak form of $\bm s_N$ and, for $\bm \lambda \in \mathcal N$, symmetrizing over pre- and post-collisional velocities as in \eqref{eq:symm_coll}, under which the integration weight is invariant, gives
		\begin{multline}
			\bm \nabla \bm s_N(\bm \lambda) = -\frac{1}{4}
			\int_{\mathbb R^3} \int_{\mathbb R^3} \int_{\mathbb S^2}
			B(\bm v - \bm v_*, \bm \sigma) M(\bm v) M(\bm v_*)
			\left( 1 + \frac{\bm \lambda \cdot \bm m(\bm v) + \bm \lambda \cdot \bm m(\bm v_*)}{N} \right)^{N-1} \\
			\times \big( \bm m(\acute{\bm v}) + \bm m(\acute{\bm v}_*) - \bm m(\bm v) - \bm m(\bm v_*)
			\big) \big(
			\bm m(\acute{\bm v}) + \bm m(\acute{\bm v}_*) - \bm m(\bm v) - \bm m(\bm v_*)
			\big)^{\top}
			\, \dd \bm \sigma \, \dd \bm v_* \, \dd \bm v.
		\end{multline}
		This representation shows that $\bm \nabla \bm s_N(\bm \lambda)$ is symmetric and nonpositive definite.
		Furthermore, we infer that a vector $\bm b$ lies in its kernel if and only if $\bm b \cdot \bm m$
		is a collision invariant, i.e., $\bm b \in \mathcal N$.
	\end{proof}

	\section{The Discontinuous Galerkin Finite Element Approximation}
	\label{sec:DGFEM}

	In this section we approximate solutions of \eqref{eq:moments-galerkin} using
	a discontinuous Galerkin approximation in the position and time variables,
	and highlight some of the properties that the discontinuous Galerkin finite element (DGFE)
	moment solutions inherit from the initial--boundary value problem of the Boltzmann equation.
	Moreover, we show that the resulting space-time DGFE moment approximation is entropy stable,
	in the sense that the approximate entropy of solutions of \eqref{eq:DGform} is bounded from above
	by that of the initial data.

	For the discretization of \eqref{eq:moments-galerkin} in space-time $(0, T) \times \Omega$,
	we consider the discontinuous Galerkin finite element method~\cite{dipietro:2012kx}.
	To introduce the DGFE approximation space, let $\mathcal{P}_p(\kappa \times I^n)$
	denote the set of $D$-variate polynomials of degree at most $p$ in an element domain $\kappa\subset\mathbb{R}^D$,
	and let $I^n = ]t^n,t^{n+1}[$ denote the $n$-th time interval,
	with $n$ a nonnegative integer.
	For a sequence of quasi-uniform triangular meshes $(\mathcal T_h)_h$,
	we indicate by $V^{h,p}((0,T)\times\Omega)$ the DGFE approximation space,
	\begin{equation}\label{eq:DGspace}
		V^{h,p}((0,T)\times\Omega)=\{g\in{}L^2((0,T)\times\Omega):\ g|_{\kappa\times I^n}\in\mathcal{P}_p(\kappa \times I^n ), \ \forall\kappa\in\mathcal{T}^h\},
	\end{equation}
	and by $V^{h,p}((0,T)\times\Omega,\mathscr{M})$ the extension of $V^{h,p}((0,T)\times\Omega)$ to $\mathscr{M}$-valued functions.

	To facilitate the presentation of the DGFE formulation, we introduce some further notational conventions.
	For any mesh $\mathcal T_h$ in the sequence, we indicate by
	\begin{equation}
		\mathcal{I}^h=\{\mathrm{int}(\partial\kappa\cap\partial\hat{\kappa}):\kappa,\hat{\kappa}\in\mathcal{T}^h,\kappa\neq\hat{\kappa}\}
	\end{equation}
	the collection of inter-element edges, by
	\begin{equation}
		\mathcal{G}^h=\{\mathrm{int}(\partial\kappa\cap\partial\Omega):\kappa\in\mathcal{T}^h\}
	\end{equation}
	the collection of boundary edges and by $\mathcal{S}^h=\mathcal{G}^h\cup\mathcal{I}^h$ their union.
	With every edge we associate a unit normal vector~$\bm{\nu}^e$.
	The orientation of~$\bm{\nu}^e$ is arbitrary except on boundary edges where $\bm{\nu}^e=\bm{n}|_e$ is the outward normal.
	For all interior edges, let $\kappa_{\pm}^e\in\mathcal{T}^h$ be the two elements adjacent to the edge~$e$
	such that the orientation of $\bm{\nu}^e$ is exterior to~$\kappa_{+}$.
	We define the edge-wise vector-valued jump and scalar mean operators for an interior interface $e\in\mathcal I$ as
	\begin{equation}
		\label{eq:jumpmean}
		\jump{c}=
		c_+ \bm\nu^{\kappa}_+ + c_- \bm\nu^{\kappa}_-;
		\quad
		\mean{c}=
		\frac{c_++c_-}{2},
	\end{equation}
	where the subscripts $(\cdot)_+$ and $(\cdot)_-$ refer to the restriction of the traces of $(\cdot)|_{\kappa_+}$ and $(\cdot)|_{\kappa_-}$ to~$e$.

	To derive the DGFE formulation of the closed moment system \eqref{eq:moments-galerkin},
	we consider space-time test functions $\{w_i\}_{i=0}^n$, $w_i\in V^{h,p}((0,T)\times\Omega)$,
	which constitute the vector $\bm w(t,\bm x)$. We note that for any $w_i\in V^{h,p}((0,T)\times\Omega)$
	and $m_i\in\mathscr M$ we have $(\bm w \cdot \bm m) \in V^{h,p}((0,T) \times \Omega,\mathscr M)$,
	and there holds
	\begin{equation}
		\label{eq:DGweight}
		\sum_n\sum_{\kappa\in\mathcal{T}^h}\int_{I^n \times \kappa \times \mathbb R^3}
		\bm w \cdot \bm m\,
		\big(
		\partial_{t}
		\beta_N(\bm \lambda \cdot \bm m)
		+
		\bm v\cdot \partial_{\bm x}
		\beta_N(\bm \lambda \cdot \bm m)
		-
		\,
		\mathcal{C}_N(\beta_N(\bm \lambda \cdot \bm m))
		\big)
		\, \dd\bm v \, \dd\bm x \, \dd t
		=
		0.
	\end{equation}
	Using the product rule and integration by parts, \eqref{eq:DGweight} can be reformulated in weak form,
	see~\cite{barth2006,abdelmalik2016a,abdelmalik2022}.
	In this work, we depart from these formulations in the treatment of the interface flux stabilization by replacing it with a general
	dissipation-flux operator $\bm D: \mathbb R^m \times \mathbb R^m \times\mathbb R^d \to \mathbb R^m \times \mathbb R^d$
	that complies with
	\begin{enumerate}
		\item \label{cond:DGdim} dimensional consistency, i.e., $\bm D(\bm\lambda_+,\bm\lambda_-,\bm\nu^{\kappa})$
		      has the same physical unit as $\int_{\mathbb R^3}  (\bm m \otimes \bm  v) \beta_N(\bm\lambda\cdot\bm m)\,\dd\bm v$;
		\item \label{cond:DGorth} and the orthogonality condition $\jump{\bm w}:\bm D(\bm\lambda,\bm\lambda,\bm \nu) =0$.
	\end{enumerate}
	Assuming that solutions of \eqref{eq:moments-galerkin} are continuous across elements,
	we may replace the interface integrands that appear when performing integration by parts,
	\begin{equation}\label{eq:consistmod}
		\int\limits_{\mathrlap{\mathbb R^3}}
		\bm v \cdot \jump{\bm w \cdot \bm m } \, \mean{\beta_N(\bm\lambda\cdot\bm m)}
		\, \dd\bm v
		=
		\int\limits_{\mathrlap{\mathbb R^3 \times [0,1]}}
		\bm v \cdot \jump{\bm w \cdot \bm m } \,
		\beta_N(\hat{\bm \lambda}(\zeta) \cdot \bm m)
		\, \dd\zeta \, \dd\bm v
		+
		\jump{\bm w}:\bm D(\bm \lambda_+,\bm \lambda_-,\bm \nu^{\kappa}).
	\end{equation}
	These conditions ensure that the equality in \eqref{eq:consistmod} holds
	when $\bm\lambda^+ = \bm\lambda^-$, i.e., any solution to \eqref{eq:moments-galerkin}
	that is sufficiently regular in the aforementioned sense also satisfies
	\begin{subequations}
		\begin{align}
			\label{eq:DGform}
			a_t(\bm \lambda;\bm w) + a_x^{\Omega}(\bm \lambda;\bm w) +  a_x^{\partial\Omega}(\bm \lambda;\bm w)
			=
			s(\bm \lambda; \bm w) \quad
			\forall w_i \in V^{h,p}((0,T)\times\Omega),
		\end{align}
		\text{where}
		\begin{align}
			a_t(\bm \lambda;\bm w)
			                                       & =
			\begin{multlined}[t]
				\sum_n\sum_{\kappa \in \mathcal{T}^h}
				\int\limits_{\mathrlap{\kappa \times \mathbb R^3}}
				\left(\bm w(t_-^{n+1}) \cdot \bm m\right) \,
				\beta_N(\bm \lambda(t_-^{n+1})\cdot\bm m)
				-
				\left(\bm w(t_+^{n}) \cdot \bm m\right) \,
				\beta_N(\bm \lambda(t_-^{n}) \cdot \bm m)
				\, \dd\bm v \, \dd\bm x
				\\
				-
				\sum_n\sum_{\kappa \in \mathcal{T}^h}
				\int\limits_{\mathrlap{I^n \times \kappa \times \mathbb R^3}}
				\beta_N(\bm \lambda \cdot \bm m)\,\partial_t \bm w \cdot \bm m
				\, \dd\bm v \, \dd\bm x \, \dd t
				\taghere
			\end{multlined} \label{eq:atDG}
			\\
			a_x^{\Omega}(\bm\lambda;\bm w)
			                                       & =
			\begin{multlined}[t]
				\sum_n
				\sum_{e\in\mathcal{I}^h}
				\int\limits_{\mathrlap{I^n \times e \times \mathbb R^3 \times [0,1]}}
				\bm v \cdot \jump{\bm w \cdot \bm m } \,
				\beta_N(\hat{\bm \lambda}(\zeta) \cdot \bm m)
				\, \dd\zeta \, \dd\bm v
				+
				\jump{\bm w}:\bm D(\bm \lambda_+,\bm \lambda_-,\bm \nu^{\kappa})
				\, \dd\bm s \, \dd t
				\\
				-
				\sum_n
				\sum_{\kappa\in\mathcal{T}^h}
				\int\limits_{\mathrlap{ I^n\times \kappa\times \mathbb R^3}}
				\beta_N(\bm \lambda \cdot \bm m)
				\,
				\bm v \cdot\partial_{\bm x}\bm w \cdot \bm m
				\, \dd\bm v \, \dd\bm x \, \dd t
				\taghere
			\end{multlined} \label{eq:aOmegaDG}
			\\
			a_x^{\partial\Omega}(\bm\lambda;\bm w) & =
			\sum_n
			\sum_{e\in\mathcal{G}^h}
			\int\limits_{\mathrlap{I^n \times e \times \mathbb R^3_{\text{out}}}}
			\bm w \cdot \bm m\, (\bm v \cdot \bm \nu^{\kappa})\,\beta_N(\bm\lambda\cdot\bm m)
			\, \dd\bm v \, \dd\bm s \, \dd t
			+
			\sum_n
			\sum_{e\in\mathcal{G}^h}
			\int\limits_{\mathrlap{I^n \times e \times \mathbb R^3_{\text{in}}}}
			\bm w \cdot \bm m\, (\bm v \cdot \bm \nu^{\kappa})\, f_w
			\, \dd\bm v \, \dd\bm s \, \dd t
			\label{eq:abndDG}
			\\
			s(\bm \lambda;\bm w)
			                                       & =
			\sum_n\sum_{\kappa\in\mathcal{T}^h}
			\int\limits_{\mathrlap{I^n \times \kappa \times \mathbb R^3}}
			\bm w \cdot \bm m
			\,
			\mathcal{C}_N(\beta_N(\bm \lambda \cdot \bm m))
			\, \dd\bm v \, \dd\bm x \, \dd t,
			\label{eq:CollDG}
		\end{align}
	\end{subequations}
	where we weakly imposed boundary data on the boundary edges in \eqref{eq:abndDG}.
	Finally, we arrive at the full DGFE moment discretization of \eqref{eq:moments-galerkin}
	where we replace each component of $\bm \lambda$ in \eqref{eq:DGform} by an approximation $\lambda_i^{h,p}$ in $V^{h,p}((0,T)\times\Omega)$ according to
	\begin{multline}
		\label{eq:DGFEform}
		\text{\em Find } \lambda_i^{h,p}\in{}V^{h,p}((0,T)\times\Omega):
		\\
		a_t(\bm\lambda^{h,p};\bm w)
		+
		a_x^{\Omega}(\bm\lambda^{h,p};\bm w)
		+
		a_x^{\partial\Omega}(\bm\lambda^{h,p};\bm w)
		=
		s(\bm\lambda^{h,p};\bm w)
		\\
		\forall w_i \in{} V^{h,p}((0,T)\times\Omega).
	\end{multline}
	As a first consequence of the discrete weak form, we establish conservation in phase space.
	\begin{lemma}
		For any constant space-time test function $\tilde{\bm w}$ such that
		$\tilde{\bm w}\cdot\bm m$ is a collision invariant conserved by the boundary conditions,
		solutions of \eqref{eq:DGFEform} discretely conserve $\tilde{\bm w} \cdot \bm m$ over time,
		\begin{equation}\label{eq:disctconserv}
			\int\limits_{\mathrlap{\Omega \times \mathbb R^3}}
			(\tilde{\bm w} \cdot \bm m) \,
			\beta_N(\bm \lambda^{h,p}(t_-^{n}, \cdot)\cdot\bm m)
			\, \dd\bm v \, \dd\bm x
			=
			\int\limits_{\mathrlap{\Omega \times \mathbb R^3}}
			(\tilde{\bm w} \cdot \bm m) \,
			\beta_N(\bm \lambda^{h,p}(0, \cdot) \cdot \bm m)
			\, \dd\bm v \, \dd\bm x,
		\end{equation}
		for $n \in \mathbb N$.
	\end{lemma}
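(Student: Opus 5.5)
The plan is to test the discrete weak form \eqref{eq:DGFEform} with the space-time test function that equals $\tilde{\bm w}$ on all slabs $I^k\times\kappa$ with $k<n$ and vanishes on later slabs. Since $\tilde{\bm w}$ is constant, it lies in $V^{h,p}$ componentwise, so this choice is admissible. We then evaluate each of the four forms in turn.

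\textbf{Time form.} In $a_t$ the volume integral vanishes because $\partial_t \tilde{\bm w}=0$. The remaining slab terms telescope:
\[
\int_{\Omega\times\mathbb R^3} (\tilde{\bm w}\cdot\bm m)\,\beta_N(\bm\lambda^{h,p}(t^n_-)\cdot\bm m)\,\dd\bm v\,\dd\bm x
-\int_{\Omega\times\mathbb R^3} (\tilde{\bm w}\cdot\bm m)\,\beta_N(\bm\lambda^{h,p}(t^0_-)\cdot\bm m)\,\dd\bm v\,\dd\bm x .
\]
Here the trace $t^0_-$ is interpreted as the initial data $\bm\lambda^{h,p}(0,\cdot)$. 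This is precisely the difference of the two sides of \eqref{eq:disctconserv}.

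\textbf{Interior spatial form.} In $a_x^\Omega$ the volume term vanishes since $\partial_{\bm x}\tilde{\bm w}=0$. Because $\tilde{\bm w}$ is continuous across elements, $\jump{\tilde{\bm w}\cdot\bm m}=0$ and $\jump{\tilde{\bm w}}=0$. The jump defined in \eqref{eq:jumpmean} is $c(\bm\nu_++\bm\nu_-)$ with $\bm\nu_-=-\bm\nu_+$, so both interface terms drop, the flux integral and the $\bm D$ term alike. I expect the orthogonality condition on $\bm D$ not to be needed here; it is only stated for the case of equal arguments.

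\textbf{Source term.} By \cref{thm:conservation-approximate-collision}, $\int_{\mathbb R^3}(\tilde{\bm w}\cdot\bm m)\,\mathcal C_N(\beta_N(\bm\lambda^{h,p}\cdot\bm m))\,\dd\bm v=0$ pointwise in $(t,\bm x)$, since $\tilde{\bm w}\cdot\bm m\in\mathscr I$. Hence $s(\bm\lambda^{h,p};\tilde{\bm w})=0$. This requires $\beta_N(\bm\lambda^{h,p}\cdot\bm m)\in\mathscr D(\mathcal C)$; I would note it holds because the argument is a polynomial times a Maxwellian.

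\textbf{Boundary form (the main step).} It remains to show $a_x^{\partial\Omega}(\bm\lambda^{h,p};\tilde{\bm w})=0$. This is exactly the total boundary flux
\[
\int_{\mathbb R^3}(\tilde{\bm w}\cdot\bm m)(\bm v\cdot\bm n)\, f^{\mathrm{trace}}\,\dd\bm v ,
\]
with outgoing trace $\beta_N(\bm\lambda^{h,p}\cdot\bm m)$ and incoming trace $f_w$, integrated over boundary edges. The hypothesis ``conserved by the boundary conditions'' should be read as saying this flux vanishes pointwise on $\partial\Omega$. I would verify it for the two cases of \cref{sec:BE}.

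For specular reflection, the substitution $\bm v\mapsto \bm v-2(\bm v\cdot\bm n)\bm n$ maps $\mathbb R^3_{\text{in}}$ onto $\mathbb R^3_{\text{out}}$ and flips the sign of $\bm v\cdot\bm n$. It preserves $1$ and $|\bm v|^2$, and tangential momentum whenever that belongs to the invariant. The inflow integral therefore cancels the outflow integral.

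For diffuse reflection, mass is balanced by the choice of $\rho_{\text{out}}$ in \eqref{eq:rho-out}. For this to work, $\rho_{\text{out}}$ must be computed from the discrete outgoing trace; that is the main care point, and I would state it as the convention for $f_w$ in \eqref{eq:abndDG}.

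With all four contributions settled, the weak form reduces to the telescoped time identity set equal to zero, which gives \eqref{eq:disctconserv}.
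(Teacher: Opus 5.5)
Your proposal is correct and follows the paper's proof: the source term vanishes by \cref{thm:conservation-approximate-collision}, the interior spatial form vanishes because $\tilde{\bm w}$ is constant and continuous, the boundary form vanishes by the hypothesis, and $a_t$ telescopes to the two sides of \eqref{eq:disctconserv} (with $t^0_-$ read as the initial data). The differences are minor. You test once with $\tilde{\bm w}$ restricted to the first $n$ slabs, whereas the paper tests slab by slab and then telescopes. You also check the boundary hypothesis for specular and diffuse reflection, including the point that $\rho_{\text{out}}$ must be computed from the discrete outgoing trace, which the paper simply assumes.
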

	\begin{proof}
		The source term vanishes, $s(\bm \lambda; \tilde{\bm w}) = 0$, by the collision invariance in
		\cref{thm:conservation-approximate-collision}.
		In the spatial form $a_x^\Omega$ the volumetric terms vanish because $\tilde{\bm w}$ is constant
		in space and the interface terms vanish because $\tilde{\bm w}$ is continuous on $\Omega$, so
		$a_x^\Omega(\bm \lambda; \tilde{\bm w}) = 0$; the boundary form vanishes,
		$a_x^{\partial \Omega}(\bm \lambda; \tilde{\bm w}) = 0$, by the hypothesis that
		$\tilde{\bm w} \cdot \bm m$ is conserved by the boundary conditions.
		Hence $a_t(\bm \lambda; \tilde{\bm w}) = 0$, and since $\tilde{\bm w}$ is constant in time this
		reduces, per time slab, to
		\begin{equation}
			\int\limits_{\mathrlap{\Omega \times \mathbb R^3}}
			(\tilde{\bm w} \cdot \bm m) \,
			\beta_N(\bm \lambda^{h,p}(t_-^{n+1}, \cdot)\cdot\bm m)
			\, \dd\bm v \, \dd\bm x
			=
			\int\limits_{\mathrlap{\Omega \times \mathbb R^3}}
			(\tilde{\bm w} \cdot \bm m) \,
			\beta_N(\bm \lambda^{h,p}(t_-^{n}, \cdot) \cdot \bm m)
			\, \dd\bm v \, \dd\bm x.
		\end{equation}
		Telescoping in $n$ gives the claim.
	\end{proof}

	Having established the conservation properties, we turn to the stability of solutions of the discrete form,
	in particular entropy stability.
	From the definition of $\beta_N$ and the approximate entropy $\varphi_N$, we know
	\[
		\varphi_N' \big( \beta_N(\bm \lambda \cdot \bm m) / M \big) = \bm \lambda \cdot \bm m
	\]
	and hence for the discrete solution $f_h = \beta_N(\bm \lambda \cdot \bm m)$ we have that
$\varphi_N'(f_h / M)$ is an element of our test space.
	This observation is the main idea of the proof, presented in \labelcref{sec:details-dgfem},
	of the following entropy-stability theorem.
	\begin{theorem}
		\label{them:entropy-stability}
		If the dissipation-flux operator satisfies the positivity condition
		\begin{equation}\label{eq:D>0}
			\jump{\bm z}:\bm D(\bm z_+,\bm z_-,\bm \nu) \geq 0 \quad \forall\bm z \in \mathbb R^m,
		\end{equation}
		and the background $\background$ conforms to the imposed boundary conditions,
		then the approximate entropy is monotonically decreasing.
		In particular, it is bounded from above by the initial approximate entropy,
		\begin{equation}\label{eq:entstab}
			\int_{\Omega\times\mathbb R^3} \background \varphi_N\left( \frac{\beta_N(\bm \lambda^{h, p}(t_-^n, \cdot) \cdot \bm m)}{\background} \right) \, \dd\bm v \, \dd \bm x
			\leq
			\int_{\Omega\times\mathbb R^3} \background \varphi_N\left( \frac{\beta_N(\bm\lambda^{h, p}(0, \cdot)\cdot\bm m)}{\background} \right) \, \dd\bm v \, \dd \bm x
		\end{equation}
		for all discrete times $n \in \mathbb N$.
	\end{theorem}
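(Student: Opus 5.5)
The plan is to test the discrete form \eqref{eq:DGFEform} with the entropy variable itself, i.e.\ choose $\bm w = \bm\lambda^{h,p}$. This is admissible because each $\lambda_i^{h,p}\in V^{h,p}$, and by construction $\varphi_N'(f_h/\background) = \bm\lambda^{h,p}\cdot\bm m$. We then treat the four forms $a_t$, $a_x^\Omega$, $a_x^{\partial\Omega}$ and $s$ separately, working on a single time slab $I^n$, and telescope in $n$ at the end. Throughout, write $\eta_N(g) = \background\varphi_N(\beta_N(g)/\background)$. Then $\eta_N'(g) = g\,\beta_N'(g)$, and the entropy flux potential is the Legendre-type dual $\eta_N^*(g) = g\beta_N(g)-\eta_N(g)$, which satisfies $(\eta_N^*)'(g) = \beta_N(g)$.

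\textbf{Temporal term.} In the volume integral of $a_t$ we have $\beta_N(\bm\lambda\cdot\bm m)\,\partial_t(\bm\lambda\cdot\bm m) = \partial_t\eta^*_N(\bm\lambda\cdot\bm m)$. Integrating in $t$ over $I^n$ gives $\eta^*_N$ at $t^{n+1}_-$ minus $\eta^*_N$ at $t^n_+$. Combining this with the two trace terms, and using $g\beta_N(g)-\eta^*_N(g)=\eta_N(g)$, leaves
\[
\int \eta_N(g^{n+1}_-) - \eta_N(g^n_-)\,\dd\bm v\,\dd\bm x \;+\; \int \Big[\eta_N(g^n_-) - \eta_N(g^n_+) - \beta_N(g^n_-)\,(g^n_- - g^n_+)\Big]\,\dd\bm v\,\dd\bm x ,
\]
where $g=\bm\lambda\cdot\bm m$. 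The bracket has to be checked to be a Bregman-type remainder of the convex function $\eta^*_N$ and hence nonnegative. Convexity of $\eta^*_N$ follows from $\beta_N'\ge 0$ for odd $N$, as used in the symmetric-dissipative proof. So $a_t \ge$ (entropy at $t^{n+1}_-$) $-$ (entropy at $t^n_-$).

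\textbf{Spatial terms.} The volume term of $a_x^\Omega$ becomes $-\int \bm v\cdot\nabla_{\bm x}\eta^*_N(g)$. Elementwise integration by parts turns it into interface jumps $-\int \bm v\cdot\jump{\eta^*_N(g)}$ plus boundary traces. The key identity on each interior edge is
\[
\int_0^1 (g_+-g_-)\,\beta_N(\hat g(\zeta))\,\dd\zeta = \eta^*_N(g_+)-\eta^*_N(g_-),
\]
for the linear path $\hat{\bm\lambda}(\zeta)$ between $\bm\lambda_-$ and $\bm\lambda_+$. This is just the fundamental theorem of calculus, and it is the reason the path-integrated flux in \eqref{eq:consistmod} was chosen. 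Consequently the interface flux cancels the jump of $\eta^*_N$ exactly, and only $\jump{\bm\lambda}:\bm D \ge 0$ remains, by \eqref{eq:D>0}.

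On boundary edges, the outflow part combines with the trace of $\eta^*_N$, and together with the inflow integral against $f_w$ it should yield $\bm n\cdot\int\bm v\,\background\varphi_N(\cdot/\background)$, up to terms that vanish because $\int \bm v\cdot\bm n\,(\ldots)$ is applied to collision-invariant-type quantities conserved by the boundary conditions. \Cref{lem:Darrozes_Guiraud} with $\widetilde\eta=\varphi_N$ then makes this nonnegative, since the background conforms to the boundary conditions. I expect this boundary step to be the main obstacle: $f_w$ is not of the form $\beta_N(\cdot)$, so I would bound the inflow contribution using convexity of $\varphi_N$ (the tangent inequality at $f_h/\background$) before invoking the lemma.

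\textbf{Source term and conclusion.} The source satisfies $s(\bm\lambda;\bm\lambda)=\int\varphi_N'(f_h/\background)\,\mathcal C_N(f_h)\le 0$ by \cref{thm:dissipation-approximate-collision}. Collecting the estimates, the discrete entropy at $t^{n+1}_-$ is at most that at $t^n_-$. Telescoping over $n$ gives \eqref{eq:entstab}, noting that the first slab starts from the projected initial state $\bm\lambda^{h,p}(0,\cdot)$.
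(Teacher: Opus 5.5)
Your route is the paper's. You test with $\bm w=\bm\lambda^{h,p}$ and bound the source by \cref{thm:dissipation-approximate-collision}. You cancel the interface jumps of $\background\varphi_N^*$ by the fundamental theorem of calculus along $\hat{\bm\lambda}(\zeta)$, leaving only $\jump{\bm\lambda}:\bm D\ge 0$. On the inflow boundary you use the tangent inequality of $\varphi_N$ at $f_h/\background$ and then \cref{lem:Darrozes_Guiraud}. You finish with a nonnegative jump term at $t^n$ and telescoping. Localizing the test function to a single slab is admissible in the time-discontinuous space and gives slab-wise monotonicity directly.

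The bracket you display in the temporal term is miscomputed. With $g_\pm=g^n_\pm$, the slab computation leaves
\[
\eta_N(g_-)-\eta_N(g_+)+g_+\big(\beta_N(g_+)-\beta_N(g_-)\big)
=\eta_N^*(g_+)-\eta_N^*(g_-)-\beta_N(g_-)(g_+-g_-).
\]
This is indeed the Bregman remainder of $\eta_N^*$ at $g_-$, as you claim. Equivalently, it is the remainder of $\background\varphi_N(\cdot/\background)$ at $f_+$, which is the form the paper uses. Your expression $\eta_N(g_-)-\eta_N(g_+)-\beta_N(g_-)(g_--g_+)$ is not a Bregman remainder. For $N=1$ it equals $\background\big[(g_+-g_-)-\tfrac12(g_+-g_-)^2\big]$, which changes sign, whereas the correct bracket is $\tfrac{\background}{2}(g_+-g_-)^2$.

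The boundary step also needs no extra vanishing ``collision-invariant-type'' terms. Using $\background\varphi_N^*(g)=g\beta_N(g)-\eta_N(g)$, the outflow part is exactly $\int_{\mathbb R^3_{\text{out}}}(\bm v\cdot\bm n)\,\eta_N(g)$. The inflow part is $\int_{\mathbb R^3_{\text{in}}}(\bm v\cdot\bm n)\,[g(f_w-\beta_N(g))+\eta_N(g)]$. Adding and subtracting $\background\varphi_N(f_w/\background)$ on the inflow part produces two pieces. The first is a tangent-inequality remainder, nonnegative because $\bm v\cdot\bm n\le 0$ there. The second is the full entropy flux of the boundary trace (outgoing $f_h$, incoming $f_w$), to which \cref{lem:Darrozes_Guiraud} applies as it stands.
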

	We stress that this exact entropy-stability guarantee pertains to the approximate operator $\mathcal C_N$, for which $\varphi_N$ is an exact entropy; the computations reported below instead employ the full physical operator $\mathcal C$, a controlled variational crime discussed and quantified in \cref{sec:approx_coll}.

	\section{Numerical Results}
	\label{sec:numerical-results}
	All steady states reported in this section are obtained by pseudo-transient continuation.
	Exploiting the entropy stability of the fully implicit space-time discretization (\cref{them:entropy-stability}),
	we advance the solution with a time-step size of about $10^3$,
	which drives the iteration to a steady state in roughly six steps for all problems considered.
	\subsection{Supersonic Nozzle Flow}
	Supersonic nozzle flows at low ambient pressure are central to a range of technological
	applications operating in the transitional flow regime: satellite thrusters for
	stationkeeping and orbit maintenance, in particular in very low Earth orbit~\cite{andreussi2022},
	purge-gas flushing systems that protect the collector optics in extreme ultraviolet
	lithography sources~\cite{he2026}, and neutral gas exhaust and fueling systems
	in fusion reactors~\cite{li2026} all rely on an accurate description of the gas
	expansion through a nozzle into a low-pressure environment.
	The supersonic expansion of a rarefied gas through a nozzle has been studied
	experimentally since the pioneering electron-beam density measurements
	of Rothe~\cite{rothe1971}.
	As our quantities of interest, we consider, analogous to these classical
	studies, the bulk flow fields of temperature and velocity throughout the
	nozzle, the Mach number statistics in the plume, the local Knudsen number
	along the nozzle axis, and, along the centerline, the temperature,
	velocity magnitude and heat flux profiles.

	We consider the steady expansion of argon through a planar
	converging-diverging nozzle, with the geometry taken
	from~\cite{krafft2025}.
	At the inflow, we prescribe a fixed pressure of \SI{700}{\pascal}, while the
	outflow pressure is set to \SI{7}{\pascal}.
	The nozzle walls are kept at a constant temperature of \SI{300}{\kelvin}
	and modeled with diffusely reflecting boundary conditions.
	Exploiting the symmetry of the geometry, we impose a symmetry condition
	along the centerline and restrict the computation to the upper half of the
	nozzle.
	As a reference solution, we compute a DSMC simulation of the same
	configuration with the open-source solver SPARTA\footnote{\url{http://sparta.github.io}}~\cite{plimpton2019}.

	\Cref{fig:nozzle-argon} shows the steady-state moment solution in terms of
	the temperature and velocity magnitude fields.
	As the gas expands through the diverging section of the nozzle, it
	accelerates from subsonic conditions near the throat into a supersonic
	plume, while its temperature drops correspondingly.
	\Cref{tab:nozzle-mach} reports the resulting Mach number statistics in the
	plume ($x > \SI{13.84}{\milli\meter}$): the maximum and the 99th-percentile
	Mach number, the mean Mach number in the plume core, and the value at the
	nozzle exit plane.
	The moment solution underpredicts the DSMC Mach numbers throughout, but the
	gap closes markedly under spatial refinement from the coarse to the
	refined mesh, mirroring the convergence behavior discussed below for the
	centerline profiles.

	We quantify the local degree of rarefaction using the mean free path
$\ell$ of Bird's variable hard sphere model~\cite{bird1994},
	\[
		\ell = \frac{1}{\sqrt{2} \, \pi d_\text{ref}^2 n}
		\left( \frac{\theta_\text{ref}}{\theta} \right)^{\omega - 1/2},
	\]
	with the local number density $n$ and local temperature $\theta$.
	The argon specific parameters are $d_\text{ref} = \SI{4.17}{\angstrom}$,
$\theta_\text{ref} = \SI{273}{\kelvin}$ and $\omega = 0.81$.
	The associated Knudsen number $\kn = \ell / r_\text{throat}$ is based
	on the throat radius $r_\text{throat} = \SI{0.441}{\milli\meter}$.
	As reported in \cref{tab:nozzle-mfp}, the Knudsen number remains small in
	the inlet and converging section ($\kn \approx 0.023$, near-continuum),
	grows into the transitional regime across the diverging section and the
	nozzle lip ($\kn$ up to $1.0$--$1.3$), and reaches the rarefied regime in
	the plume, peaking at $\kn \approx 2.0$--$2.1$ (locally as high as $6.2$)
	just downstream of the lip before relaxing to $\kn \approx 1.5$--$1.7$
	further downstream.

	\begin{table}
		\centering
		\begin{subtable}[t]{\textwidth}
			\centering
			\caption{Mach number statistics in the nozzle plume
				($x > \SI{13.84}{\milli\meter}$): maximum Mach number $\text{Ma}_\text{max}$,
				99th-percentile Mach number $\text{Ma}_{p99}$, mean Mach number in the plume core
				$\overline{\text{Ma}}_\text{core}$, and Mach number at the nozzle exit plane
				$\text{Ma}_\text{exit}$.}
			\label{tab:nozzle-mach}
			\begin{tabular}{@{}l S S S S@{}}
				\toprule
				{}                & {$\text{Ma}_\text{max}$} & {$\text{Ma}_{p99}$} & {$\overline{\text{Ma}}_\text{core}$} & {$\text{Ma}_\text{exit}$} \\
				\midrule
				DSMC              & 3.66                     & 3.30                & 2.40                                 & 2.65                      \\
				Moments (refined) & 3.07                     & 2.86                & 2.07                                 & 2.26                      \\
				Moments (coarse)  & 2.71                     & 2.56                & 1.82                                 & 1.99                      \\
				\bottomrule
			\end{tabular}
		\end{subtable}

		\vspace{1.5ex}
		\begin{subtable}[t]{\textwidth}
			\centering
			\caption{Mean free path $\ell$ and Knudsen number $\kn = \ell / r_\text{throat}$,
				evaluated as the median over four regions along the nozzle axis; the throat is
				located at $x = \SI{3.84}{\milli\meter}$.}
			\label{tab:nozzle-mfp}
			\small
			\begin{tabular}{@{}l S S S l@{}}
				\toprule
				{Region}                                          & {$\ell_\text{DSMC}$}                       & {$\ell_\text{coarse}$} & {$\ell_\text{refined}$} & {Knudsen number regime}                        \\
				{}                                                & \multicolumn{3}{c}{in \unit{\micro\meter}}                                                                                                     \\
				\midrule
				Inlet, converging ($x < \SI{3.84}{\milli\meter}$) & 9.8                                        & 10.2                   & 10.2                    & small, $\kn \approx 0.023$                     \\
				Diverging (\SIrange{3.84}{13.84}{\milli\meter})   & 59                                         & 95                     & 94                      & transitional, $\kn$ up to $1.0$--$1.3$         \\
				Near plume (\SIrange{13.84}{30}{\milli\meter})    & 894                                        & 928                    & 920                     & rarefied, $\kn \approx 2.0$--$2.1$ (max $6.2$) \\
				Far plume (\SIrange{30}{113.75}{\milli\meter})    & 743                                        & 666                    & 671                     & rarefied, $\kn \approx 1.5$--$1.7$             \\
				\bottomrule
			\end{tabular}
		\end{subtable}
		\caption{Plume statistics for the DSMC reference and the moment solution on the coarse and
			refined meshes of \cref{fig:nozzle-argon-centerline}.}
	\end{table}

	For this strongly nonequilibrium expansion, the nonlinear renormalization
	introduced in \cref{sec:approx_coll} is essential for a well-posed and
	physically admissible moment system. With the linear closure $N = 1$, the
	pseudo-transient continuation reports convergence, yet the resulting steady
	state is not realizable: a localized pocket of a few nonphysical cells
	with negative densities, pressures and temperatures.
	In contrast, the nonlinear closure with $N = 3$ converges robustly to
	a physically admissible steady state.

	\Cref{fig:nozzle-argon-centerline} compares the centerline profiles of
	temperature, velocity magnitude and heat flux obtained with the moment
	method against the DSMC reference.
	Upon spatial refinement, the moment solution approaches the DSMC data from
	below, both for the temperature in
	\cref{fig:nozzle-argon-centerline-temperature} and for the velocity
	magnitude in \cref{fig:nozzle-argon-centerline-velocity}, indicating that
	the spatial discretization error dominates the overall error budget for
	this problem.
	In contrast, refining the moment discretization beyond $N = 3, K = 6$ does not
	have a significant effect on the solution, so the moment truncation error
	is subdominant compared to the discretization error of the DGFE method.
	Finally, as shown in \cref{fig:nozzle-argon-centerline-heat-flux}, the
	DSMC heat flux exhibits pronounced stochastic fluctuations.
	The reference solution was obtained by assuming steady state once the
	mass flow rates at the inflow and outflow had balanced, followed by
	time-averaging of the sampled particle data; this procedure proved
	insufficient to damp the statistical noise inherent to the particle
	method, in clear contrast to the noise-free moment solution.
	\begin{figure}
		\centering
		\includegraphics[width=\textwidth]{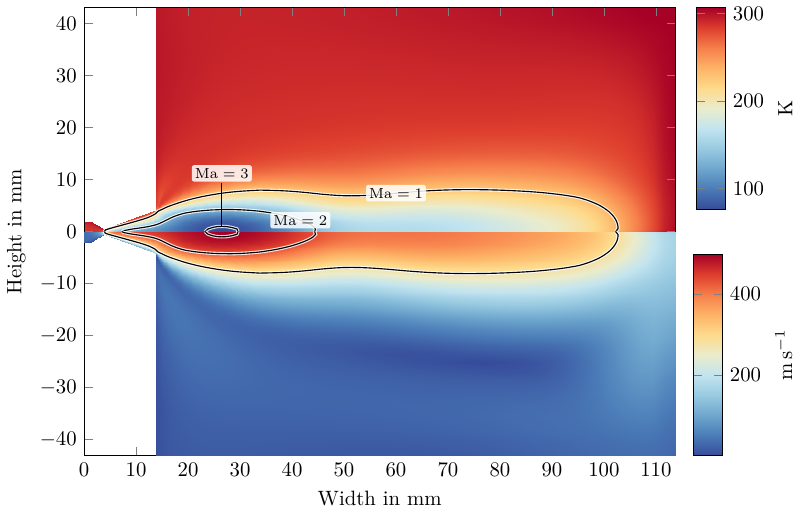}
		\caption{Steady-state solution of the supersonic nozzle flow of argon. The upper half shows the temperature field and the lower half the magnitude of the velocity field, mirrored at the symmetry axis to recover the full nozzle geometry.
			Both halves carry the same isolines of the local Mach number
			$\text{Ma} = |\bm u| / \sqrt{\gamma R_s \theta}$, with the adiabatic index
			$\gamma = 5/3$ of a monatomic gas.
			The maximum $\text{Ma}_\text{max} = 3.07$ of \cref{tab:nozzle-mach} is attained on the
			centerline at $x = \SI{26.5}{\milli\meter}$, where the temperature drops to \SI{76}{\kelvin}.
			Nonlinear renormalization $N=3$ with polynomial degree $K=6$ and piecewise constant DG trial functions on a triangular mesh with \num{e4} elements.
			The fields shown are a linear reconstruction of the piecewise constant discrete solution, obtained by interpolating the cell values across the mesh for visualization.}
		\label{fig:nozzle-argon}
	\end{figure}

	\def\momentfilerefined{data/centerline_moment_dedup.csv}
	\def\momentfilecoarse{data/centerline_moment_nx0_dedup.csv}
	\def\dsmcfile{data/centerline_dsmc_dedup.csv}
	\begin{figure}
		\centering
		\pgfplotslegendfromname{nozzle-centerline-legend}

		\vspace{1ex}
		\begin{subfigure}[b]{0.48\textwidth}
			\centering
			\begin{tikzpicture}
				\begin{axis}[
						xlabel={$x$ in \unit{\milli\meter}},
						ylabel={$\theta$ in \unit{\kelvin}},
						width=7.5cm,
						height=4.3cm,
						xmin=0, xmax=113.75,
						legend to name=nozzle-centerline-legend,
						legend columns=-1,
						legend cell align=left,
						legend style={draw=none},
						every axis plot/.append style={line width=0.9pt},
					]
					\addplot+[no marks, color1] table[x expr=\thisrow{X}*1000, y=temperature, col sep=comma] {\momentfilerefined};
					\addlegendentry{Moments (refined)}
					\addplot+[no marks, color1, dashed] table[x expr=\thisrow{X}*1000, y=temperature, col sep=comma] {\momentfilecoarse};
					\addlegendentry{Moments (coarse)}
					\addplot+[no marks, color2] table[x expr=\thisrow{X}*1000, y=temperature_total, col sep=comma] {\dsmcfile};
					\addlegendentry{DSMC}
				\end{axis}
			\end{tikzpicture}
			\caption{Temperature.}
			\label{fig:nozzle-argon-centerline-temperature}
		\end{subfigure}
		\hfill
		\begin{subfigure}[b]{0.48\textwidth}
			\centering
			\begin{tikzpicture}
				\begin{axis}[
						xlabel={$x$ in \unit{\milli\meter}},
						ylabel={$|\bm u|$ in \unit{\meter\per\second}},
						width=7.5cm,
						height=4.3cm,
						xmin=0, xmax=113.75,
						every axis plot/.append style={line width=0.9pt},
					]
					\addplot+[no marks, color1] table[x expr=\thisrow{X}*1000, y expr=sqrt((\thisrow{velocity_0})^2+(\thisrow{velocity_1})^2+(\thisrow{velocity_2})^2), col sep=comma] {\momentfilerefined};
					\addplot+[no marks, color1, dashed] table[x expr=\thisrow{X}*1000, y expr=sqrt((\thisrow{velocity_0})^2+(\thisrow{velocity_1})^2+(\thisrow{velocity_2})^2), col sep=comma] {\momentfilecoarse};
					\addplot+[no marks, color2] table[x expr=\thisrow{X}*1000, y expr=sqrt((\thisrow{velocity_0})^2+(\thisrow{velocity_1})^2+(\thisrow{velocity_2})^2), col sep=comma] {\dsmcfile};
				\end{axis}
			\end{tikzpicture}
			\caption{Velocity magnitude.}
			\label{fig:nozzle-argon-centerline-velocity}
		\end{subfigure}

		\vspace{1em}
		\begin{subfigure}[b]{\textwidth}
			\noindent\makebox[\textwidth][r]{%
				\begin{tikzpicture}
					\begin{groupplot}[
							group style={group size=2 by 1, horizontal sep=0.9cm},
							width=7.3cm,
							height=4.3cm,
							xlabel={$x$ in \unit{\milli\meter}},
							ylabel={$q_x$ in \unit{\watt\per\meter\squared}},
							unbounded coords=discard,
							every axis plot/.append style={line width=0.9pt},
						]
						\nextgroupplot[title={Throat region}, restrict x to domain=0:10, xmin=0, xmax=10]
						\addplot+[no marks, color1] table[x expr=\thisrow{X}*1000, y=heat_flux_0, col sep=comma] {\momentfilerefined};
						\addplot+[no marks, color1, dashed] table[x expr=\thisrow{X}*1000, y=heat_flux_0, col sep=comma] {\momentfilecoarse};
						\addplot+[no marks, color2] table[x expr=\thisrow{X}*1000, y=heat_flux_translational_0, col sep=comma] {\dsmcfile};
						\nextgroupplot[title={Expansion region}, restrict x to domain=10:113.75, xmin=10, xmax=113.75,
							yticklabel pos=right, ylabel near ticks]
						\addplot+[no marks, color1] table[x expr=\thisrow{X}*1000, y=heat_flux_0, col sep=comma] {\momentfilerefined};
						\addplot+[no marks, color1, dashed] table[x expr=\thisrow{X}*1000, y=heat_flux_0, col sep=comma] {\momentfilecoarse};
						\addplot+[no marks, color2] table[x expr=\thisrow{X}*1000, y=heat_flux_translational_0, col sep=comma] {\dsmcfile};
					\end{groupplot}
				\end{tikzpicture}%
				\hspace*{0.2cm}}
			\caption{Streamwise heat flux $q_x$, split into the near-throat region ($0 \le x \le \SI{10}{\milli\meter}$) and the downstream expansion region ($\SI{10}{\milli\meter} \le x \le \SI{113.75}{\milli\meter}$) to resolve the two disparate magnitude scales.}
			\label{fig:nozzle-argon-centerline-heat-flux}
		\end{subfigure}
		\caption{Fluid dynamic fields along the symmetry line of the nozzle geometry,
			comparing the moment method on a coarse (\num{2608} elements) and a refined
			(\num{10432} elements) triangular mesh against DSMC.
			The strong stochastic fluctuations of the DSMC heat flux in~(c) are characteristic
			of the particle method and stand in contrast to the noise-free moment solution.}
		\label{fig:nozzle-argon-centerline}
	\end{figure}

	\subsection{Mass Flow Through a Channel}
	In this example, we analyze the mass flow of a rarefied gas through a long isothermal channel with cross section $\Omega$,
	an open, convex and bounded subset of $\mathbb R^2$, centered around the origin.
	We simulate the gas flow for a wide range of pressures, from the continuum regime to the collisionless limit.
	For the latter, we present analytical results for the distribution function and our quantity of interest,
	the mass flow rate through the channel.
	Finally, we compare the results of our numerical algorithm with experimental measurements for helium in a circular channel.
	This problem has been studied extensively in the literature.
	The experimental and theoretical treatment dates back to Knudsen~\cite{knudsen1909};
	first numerical results for simplified collision models appeared in~\cite{cercignani1966},
	followed by semi-analytical treatments for different cross sections in~\cite{Sharipov1994,Sharipov1999}.
	The work~\cite{Varoutis2008} discusses experimental results for several different cross sections
	and compares them with a simplified computational model.
	Finally, Kunze et al.~\cite{Kunze2022} review results for several channel geometries and gases
	to obtain estimates on molecular diameters.
	To date, however, no results with the full collision operator have been reported.

	First, we assume that the length of the channel $L$ is much larger than the diameter $D$ of the cross section.
	In this case, we can ignore boundary effects at the ends of the channel and prescribe fixed upstream and downstream pressures
$p_1$ and $p_2$, respectively.
	Since the channel has a constant temperature, we assume diffuse reflection boundary conditions of fixed temperature $\theta_0$.
	Instead of simulating the full three-dimensional pipe, we pose a reduced equation
	on the two-dimensional cross section.
	In steady state, our quantity of interest, the mass flow rate, is constant on each cross section.
	Thus, we can capture the correct mass flow rate with a computation on a two-dimensional domain
	if we include the effect of the gradient between the upstream and the downstream pressure.
	To that end, we define the nondimensional pressure gradient $X_p$,
	\[
		X_p = \frac{D}{L} \frac{p_1 - p_2}{p_0},
	\]
	where $p_0$ is the reference pressure used for the nondimensionalization of the Boltzmann equation.
	Following \labelcref{sec:analytical-mass-flow}, we seek the distribution in the form
$f = (p(x_3)/p_0) \, M_{\rho_0, \bm 0, R_s \theta_0} \, g$ with $g \approx 1$,
	so that the perturbation $g$ carries the dependence on the cross-sectional coordinates and the velocity.
	\begin{lemma}
		\label{lem:channel-reduced-be}
		In the limit of small pressure gradients, $X_p \ll 1$, the reduced Boltzmann equation equipped with diffuse reflection boundary conditions
		of unit temperature in nondimensional units reads
		\begin{equation*}
			M_{1, \bm 0, 1} \big(v_1 \partial_{x_1} g
			+ v_2 \partial_{x_2} g \big)
			- X_p M_{1, \bm 0, 1} v_3 = \frac{1}{\kn} {\mathcal C}(M_{1, \bm 0, 1} g),
		\end{equation*}
		with the Knudsen number
		\begin{equation*}
			\kn = \sqrt{\frac{\pi}{2}} \frac{1}{D} \frac{\mu}{p_0} \sqrt{R_s \theta_0}.
		\end{equation*}
		Here, $\mu$ is the dynamic viscosity of the gas at temperature $\theta_0$
		and $R_s$ the specific gas constant.
	\end{lemma}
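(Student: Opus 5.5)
The plan is to start from the steady Boltzmann equation in dimensional form on the infinite channel $\Omega\times\mathbb R$. We insert the ansatz $f = (p(x_3)/p_0)\, M_{\rho_0,\bm 0,R_s\theta_0}\, g(x_1,x_2,\bm v)$, in which $p$ is linear in $x_3$ with slope $-(p_1-p_2)/L$ and $g$ does not depend on $x_3$. Then we nondimensionalize and linearize in the small parameter $X_p$.

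First we nondimensionalize. Lengths are scaled by $D$, velocities by the thermal speed $\sqrt{R_s\theta_0}$, and densities by $\rho_0 = p_0/(R_s\theta_0)$, so that the wall Maxwellian becomes $M_{1,\bm 0,1}$. The collision operator then acquires the prefactor $1/\kn$. To identify $\kn$ in the stated form, we use the standard relation between the mean free path and the viscosity, $\ell = \sqrt{\pi/2}\,\mu\sqrt{R_s\theta_0}/p_0$, and set $\kn = \ell/D$. Equivalently, the scale of the collision frequency is fixed so that the Chapman--Enskog viscosity of the dimensionless operator matches $\mu$. This calibration is a convention, so we state it explicitly rather than derive it.

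Next we apply the transport operator to the ansatz. The terms $v_1\partial_{x_1}$ and $v_2\partial_{x_2}$ act only on $g$ and give $(p/p_0)M\,(v_1\partial_{x_1}g + v_2\partial_{x_2}g)$. The term $v_3\partial_{x_3}$ acts only on $p$ and gives $v_3 (p'/p_0) M g$. In dimensionless variables $p'/p$ equals $-X_p$ up to the factor $p_0/p$. We then split $g = 1 + h$ with $h = O(X_p)$. To first order in $X_p$, the term $v_3 (p'/p_0) M g$ reduces to $-X_p M v_3$, while $X_p v_3 M h$ is of second order and is dropped. The local pressure factor $p/p_0 = 1 + O(X_p D/L)$ is replaced by $1$ consistently at the same order.

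For the right-hand side, we use that $\mathcal C(M)=0$ by \eqref{eq:Equilibrium}. Hence $\mathcal C(M(1+h)) = \mathcal L(Mh) + \mathcal C(Mh)$. The quadratic part $\mathcal C(Mh)$ is $O(X_p^2)$, so at leading order only the linear part contributes. Since $\mathcal C(M g) = \mathcal L(M h) + O(X_p^2)$, the equation can be written in the stated form with ${\mathcal C}(M_{1,\bm 0,1}g)$ understood at linear order.

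Finally we treat the boundary. Diffuse reflection \eqref{eq:bc-diffuse} with wall temperature $\theta_0$ and zero wall velocity becomes, after scaling, reflection with $M_{1,\bm 0,1}$. In terms of $g$, this condition imposes that $g$ on incoming velocities equals a constant, fixed by the mass-flux balance \eqref{eq:rho-out}.

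We expect the main obstacle to be the bookkeeping of orders. Two points need care: justifying that the quadratic collision term and the $x_3$-dependence of the prefactor $p/p_0$ are both negligible at the same order, and matching the constant $\sqrt{\pi/2}$ in $\kn$. We would handle the constant by simply adopting the standard viscosity-based definition of the mean free path.
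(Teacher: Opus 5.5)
Your route is the same as the paper's: the ansatz $f=(p/p_0)\,M_{\rho_0,\bm 0,R_s\theta_0}\,g$, linearization of the $v_3\partial_{x_3}$ term in $X_p$, scaling by $D$ and $\sqrt{R_s\theta_0}$, and $\kn$ fixed by the viscosity-based mean-free-path convention, which the paper also simply posits. The problem is the step you single out as the crux. The estimate $p/p_0=1+O(X_pD/L)$ is wrong. For linear $p$, in units of $D$, $p/p_0=1-X_p(\tilde x_3-\tilde x_3^{0})$, where $\tilde x_3^{0}$ is the section with $p=p_0$. Along the channel $p/p_0$ therefore ranges over $1\pm X_pL/(2D)=1\pm(p_1-p_2)/(2p_0)$, which is the interval $[1/3,5/3]$ for the paper's ratio $p_1/p_2=5$. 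This $O(1)$ variation cannot be absorbed pointwise. An $x_3$-independent $g$ is then inconsistent along the channel, because the local collision frequency changes by an $O(1)$ factor. Moreover, since $\mathcal C$ is quadratic, your undivided equation actually carries $(p/p_0)^2$ in front of $\mathcal C(Mg)$, not $p/p_0$.

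The paper handles this in two steps. It first divides by $p/p_0$, so that only a single factor $p/p_0$ multiplies the collision term. It then integrates along $x_3$ against a constant test function; for linear $p$ this replaces $p/p_0$ by its mean, which is exactly $1$ when $p_0=(p_1+p_2)/2$. Averaging $(p/p_0)^2$ instead would give $1+\tfrac{1}{12}((p_1-p_2)/p_0)^2\neq 1$. The paper also derives the linearity of $p$, by testing with $v_3$ and using momentum conservation, rather than postulating it.

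Alternatively, you can keep your argument if you make it explicitly local. Within distance $O(D)$ of the section where $p=p_0$, one has $p/p_0=1+O(X_p)$. This factor only multiplies terms that are already $O(X_p)$, namely $v_1\partial_{x_1}g+v_2\partial_{x_2}g$ and $\mathcal C(Mg)$, so the error is $O(X_p^2)$. Choosing the midpoint section recovers $p_0=(p_1+p_2)/2$.

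Finally, do not end with $\mathcal C$ ``understood at linear order''. The lemma and the computations built on it use the full nonlinear operator, and the paper keeps it deliberately. Your own identity $\mathcal C(M(1+h))=\mathcal L(Mh)+\mathcal C(Mh)$ with $\mathcal C(Mh)=O(X_p^2)$ is exactly what shows that keeping the full operator is equally first-order consistent.
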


	For comparison with experimental data, we normalize our result for the mass flow rate.
	The resulting quantity $G$ is called the normalized mass flow rate.

	\begin{definition}
		With the solution $g$ of the reduced Boltzmann equation in \cref{lem:channel-reduced-be},
		the normalized mass flow rate $G$ reads
		\begin{equation*}
			G = \frac{1}{X_p} \frac{1}{|\Omega|} \int_{\Omega} \int_{\mathbb R^3}
			v_3 M_{1, \bm 0, 1}({\bm v}) g(x_1,  x_2, {\bm v})
			\, \dd {\bm v} \, \dd(x_1, x_2).
		\end{equation*}
	\end{definition}

	For the collisionless limit, we obtain an analytical expression for the distribution function.
	\begin{lemma}
		\label{them:pdf-collisionless}
		The solution to the boundary value problem in \cref{lem:channel-reduced-be}
		for a circular cross section of radius $R > 0$ in the collisionless limit
		is given by
		\begin{equation*}
			g(x_1, x_2, \bm v) =
			1 + X_p \frac{v_3}{\sqrt{v_1^2 + v_2^2}} \Bigg(
			\frac{x_1 v_1 + x_2 v_2}{\sqrt{v_1^2 + v_2^2}}
			+ \sqrt{\frac{(x_1 v_1 + x_2 v_2)^2}{v_1^2 + v_2^2} + R^2 - x_1^2 - x_2^2}
			\Bigg)
		\end{equation*}
		for $(x_1, x_2) \in D_R(\bm 0)$ and $\bm v \in \mathbb R^3 \setminus \{ \bm 0 \}$.
	\end{lemma}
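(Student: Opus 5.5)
The plan is to solve the collisionless equation by the method of characteristics. With $\kn\to\infty$ the collision term drops out. Dividing by $M_{1,\bm 0,1}>0$, the reduced equation of \cref{lem:channel-reduced-be} becomes the stationary transport equation
\[
v_1\partial_{x_1} g + v_2 \partial_{x_2} g = X_p v_3
\]
on $D_R(\bm 0)$. Write $\bm x=(x_1,x_2)$, $\bm w=(v_1,v_2)\neq \bm 0$ and $|\bm w|=\sqrt{v_1^2+v_2^2}$. Along the characteristic $s\mapsto \bm x - s\bm w$, the function $g$ decreases at the constant rate $X_p v_3$ as $s$ grows, i.e.\ as we go backward along the trajectory. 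Hence
\[
g(\bm x,\bm v)=g(\bm x - s_b\bm w,\bm v)+X_p v_3 s_b,
\]
where $s_b(\bm x,\bm w)>0$ is the backward exit time, at which $\bm x_b=\bm x-s_b\bm w$ lies on $\partial D_R$. At $\bm x_b$ the velocity $\bm w$ is incoming, since the particle travels into the disk from there.

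The second step is the boundary data. The diffuse reflection condition \eqref{eq:bc-diffuse} with zero wall velocity and unit temperature prescribes
\[
M_{1,\bm 0,1}\, g = \rho_{\text{out}} M_{1,\bm 0,1}
\]
on incoming velocities, so $g(\bm x_b,\bm v)=\rho_{\text{out}}(\bm x_b)$. We must show that $\rho_{\text{out}}\equiv 1$. Insert the ansatz $g=1+X_p v_3\, s_b$ into the mass-flux balance \eqref{eq:rho-out}. The perturbation $X_p v_3 s_b(\bm x,\bm w)$ is odd in $v_3$, while the weight $|\bm v\cdot\bm n|M_{1,\bm 0,1}$ and the domain $\mathbb R^3_{\text{out}}$ are symmetric in $v_3$, so its contribution to the outgoing flux vanishes. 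The outgoing flux therefore equals that of $M_{1,\bm 0,1}$, which forces $\rho_{\text{out}}=1$. This is self-consistent, and by uniqueness of the linear collisionless boundary value problem (explicit via characteristics once the boundary data are fixed) it yields the solution $g=1+X_p v_3 s_b$.

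The final step is the geometry: compute $s_b$ by solving $|\bm x - s\bm w|^2=R^2$ for its positive root. The quadratic
\[
s^2|\bm w|^2 - 2s\,\bm x\cdot\bm w + |\bm x|^2-R^2=0
\]
has roots
\[
s=\frac{\bm x\cdot\bm w\pm\sqrt{(\bm x\cdot\bm w)^2+|\bm w|^2(R^2-|\bm x|^2)}}{|\bm w|^2}.
\]
Since $|\bm x|<R$, the product of the roots is negative, so exactly one root is positive, namely the one with the $+$ sign. Rewriting this root as
\[
s_b=\frac{1}{|\bm w|}\Big(\frac{\bm x\cdot\bm w}{|\bm w|}+\sqrt{\frac{(\bm x\cdot\bm w)^2}{|\bm w|^2}+R^2-|\bm x|^2}\Big)
\]
gives exactly the stated formula.

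The main obstacle is the justification of $\rho_{\text{out}}=1$, i.e.\ the parity argument combined with the uniqueness of the solution. The degenerate set $v_1=v_2=0$, where characteristics do not move, has measure zero and is excluded in the statement.
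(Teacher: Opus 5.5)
Your proof is correct and follows the paper's route. The paper writes $g = 1 + X_p v_3 h$, reduces the problem to $v_1\partial_{x_1}h + v_2\partial_{x_2}h = 1$ with $h=0$ on the inflow boundary, and identifies $h$ by characteristics as the backward travel time $\tau_\Omega(\bm x^\perp,-\bm v^\perp)$, which is exactly your $s_b$. Your parity argument giving $\rho_{\text{out}}=1$ justifies the inflow condition that the paper simply imposes. Note, however, that the diffuse-reflection problem fixes $g$ only up to an additive constant, since $g+c$ is again a solution with $\rho_{\text{out}}+c$; so your appeal to uniqueness really relies on the normalization $g\approx 1$ (unit density) implicit in the ansatz.
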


	With the analytical solution for the distribution function, we can also compute an analytical value
	for the normalized mass flow rate. This number was already reported by Knudsen~\cite{knudsen1909},
	see \labelcref{sec:analytical-mass-flow} on the relation between the results.

	\begin{lemma}
		\label{thm:mass-flow-rate-collisionless}
		The solution $g$ in \cref{them:pdf-collisionless} generates a mass flux
		with normalized mass flow rate
		\[
			G = \frac{2 \sqrt{2}}{3 \sqrt{\pi}} \approx 0.531923,
		\]
		for the geometry nondimensionalized by the channel diameter, i.e., a cross section of unit diameter, $R = 1/2$.
	\end{lemma}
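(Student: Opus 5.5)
The plan is to insert the explicit collisionless solution of \cref{them:pdf-collisionless} into the definition of $G$ and evaluate the resulting integrals in closed form. A change of variables factorizes them into a velocity integral and a purely geometric integral over the disk. Throughout we take $R = 1/2$ and $|\Omega| = \pi R^2$.

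\emph{Step 1: discard the equilibrium part and identify a chord length.} The constant $1$ in $g$ contributes $\int v_3 M_{1,\bm 0,1}\,\dd\bm v = 0$ by oddness in $v_3$. For the perturbation, we write the in-plane velocity in polar form, $(v_1,v_2) = r(\cos\alpha,\sin\alpha)$, and set $s = x_1\cos\alpha + x_2\sin\alpha$. The bracket in \cref{them:pdf-collisionless} then becomes
\[
\ell(\bm x,\alpha) = s + \sqrt{s^2 + R^2 - |\bm x|^2},
\]
which is exactly the distance from $\bm x$ to $\partial D_R(\bm 0)$ along the ray $\bm x + t(\cos\alpha,\sin\alpha)$, $t \ge 0$. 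With this notation,
\[
G = \frac{1}{\pi R^2} \int_{D_R} \int_0^{2\pi}\int_0^\infty\int_{\mathbb R} \frac{v_3^2}{r}\, M_{1,\bm 0,1}\, \ell(\bm x,\alpha)\, r\,\dd v_3\,\dd r\,\dd\alpha\,\dd\bm x .
\]
The factors of $r$ cancel, and $\ell$ does not depend on $r$ or $v_3$.

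\emph{Step 2: velocity factor.} Since $M_{1,\bm 0,1} = (2\pi)^{-3/2}e^{-v_3^2/2}e^{-r^2/2}$, the velocity integrals factor into standard Gaussian moments:
\[
\int_{\mathbb R} v_3^2 e^{-v_3^2/2}\,\dd v_3 = \sqrt{2\pi},
\qquad
\int_0^\infty e^{-r^2/2}\,\dd r = \sqrt{\pi/2}.
\]
Together with the prefactor this gives $\pi(2\pi)^{-3/2} = 1/(2\sqrt{2\pi})$.

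\emph{Step 3: geometric factor.} This is the only step needing an idea; the approach is a Crofton-type slicing. For a fixed direction $\alpha$, decompose the disk into chords parallel to that direction. A chord at signed offset $y$ has length $c(y) = 2\sqrt{R^2-y^2}$. Along such a chord, $\ell$ runs linearly from $c$ down to $0$, so the chord contributes $\int_0^{c} t\,\dd t = c^2/2$. Hence
\[
\int_{D_R} \ell(\bm x,\alpha)\,\dd\bm x = \int_{-R}^{R} 2(R^2-y^2)\,\dd y = \frac{8R^3}{3},
\]
independently of $\alpha$ by rotational symmetry. Integrating over $\alpha$ and dividing by $|\Omega|$ gives $\frac{2\pi \cdot 8R^3/3}{\pi R^2} = 16R/3$, which equals $8/3$ for $R = 1/2$.

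\emph{Step 4: combine.} Multiplying the two factors,
\[
G = \frac{8}{3}\cdot\frac{1}{2\sqrt{2\pi}} = \frac{4}{3\sqrt{2\pi}} = \frac{2\sqrt 2}{3\sqrt\pi} \approx 0.531923,
\]
as claimed. The main care required is in Step 1: confirming the geometric meaning of the square-root expression, and checking that the factor $1/r$ in $g$ is cancelled by the polar Jacobian so that no singularity at $r = 0$ remains. The remaining computations are routine.
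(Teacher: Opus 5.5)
Your proof is correct and follows the paper's route: polar coordinates in $(v_1,v_2)$ where the $1/r$ homogeneity cancels the Jacobian, the Gaussian factor $1/(2\sqrt{2\pi})$, and a reduced geometric integral equal to $8/3$ for $R=1/2$. The paper only asserts that last value, and your chord-slicing argument actually derives it. One small slip that does not affect the result: $s+\sqrt{s^2+R^2-|\bm x|^2}$ is the distance along the \emph{backward} ray $\bm x - t(\cos\alpha,\sin\alpha)$ (the paper's $\tau_\Omega(\bm x^\perp,-\bm v^\perp)$), not the forward one, but along each chord it still runs linearly between $0$ and $c$, so the contribution $c^2/2$ is unchanged.
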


	For a numerical solution of the reduced model in \cref{lem:channel-reduced-be} we must specify the free
	parameters of the model. These include the length $L$ and diameter $D$ of the channel, the temperature of the walls $\theta_0$
	and the properties of the gas, i.e., its viscosity and molar mass.
	In the following, we use the parameters as reported in \cref{tab:perrier-data}.
	For a sequence of Knudsen numbers, ranging from the continuum to the collisionless regime, we compute the numerical
	solution of the reduced model and the resulting normalized mass flow rate $G$.
	We observe that the mesh size and the degree of the DG basis have minimal effect on $G$.
	However, with increasing Knudsen number, the degree of the moment approximation must be increased.
	This is consistent with our analytical result in \cref{them:pdf-collisionless}
	for the distribution function in the collisionless limit.
	The solution develops a singularity at $\bm v = 0$, so for finite but large Knudsen numbers the distribution
	function exhibits sharp features for small microscopic velocities.
	Hence, resolving these features requires a higher polynomial degree.

	The final result of our computations is shown in \cref{fig:G-flux-experimental}
	with a representative velocity profile at $\kn = 0.44$ displayed in \cref{fig:velocity-profile}.
	The solid line in \cref{fig:G-flux-experimental} representing our results arises from a least squares fit of a cubic polynomial to the
	logarithmically rescaled raw data; see \labelcref{sec:post-processing-mass-flow} for details.
	Overall, our numerical results agree very well with the experimental data.
	For Knudsen numbers below $\kn = 0.2$, our numerical predictions closely follow the experimental data
	up to the (unknown) measurement errors.
	However, beyond the global minimum of the curve, known as the Knudsen minimum, at around $\kn = 1$,
	our values lie systematically above the experimental data.
	Since our numerical result approaches the analytical value for $\kn \to \infty$, we suspect that the experimental
	data has a systematic bias for large Knudsen numbers.
	Instead of increasing after the Knudsen minimum,
	the experimentally measured data decreases for $\kn > 1$, contradicting the predictions of kinetic theory.
	The purpose of \cref{fig:G-flux-experimental} is the comparison against the experimental data
	of~\cite{perrier2011}, which is noisy and carries unknown error bars.
	Readers interested in the raw numerical values may instead consult the mapped data
	in \cref{fig:raw-G-log}.

	\begin{table}
		\caption{Parameters of the flow problem in~\cite{perrier2011}
			for helium in a circular cross section of radius $R$.}
		\label{tab:perrier-data}
		\centering
		\begin{tabular}{@{}cc@{}}
			\toprule
			Parameter  & {Value}                                          \\
			\midrule
			$L$        & \SI{1.82e-2}{\meter}                             \\
			$D$        & \SI{49.6e-6}{\meter}                             \\
			$\theta_0$ & \SI{296.5}{\kelvin}                              \\
			$\mu$      & \SI{1.9732e-05}{\pascal \second}                 \\
			$R_s$      & \SI{2077.26}{\joule \per \kilogram \per \kelvin} \\
			\bottomrule
		\end{tabular}
	\end{table}

	\begin{figure}
		\centering
		\begin{tikzpicture}
			\begin{loglogaxis}
				[
					ymax = 1.0,
					enlargelimits = true,
					scaled ticks = false,
					log ticks with fixed point,
					ticklabel style={
							/pgf/number format/.cd,
							/pgf/number format/fixed
						},
					declare function = {
							c0 = -0.63488;
							c1 = -0.253103;
							c2 = 0.211179;
							c3 = -0.0114429;
							Gtilde(\d) = c0 + c1 * \d + c2 * (\d)^2 + c3 * (\d)^3;
							toDelta(\kn) = sqrt(pi) / 2 / \kn;
							G(\kn) = exp(Gtilde(ln(1 + toDelta(\kn))));
						},
					xlabel = {$\kn$},
					ylabel = {$G$},
					samples = 256,
				]
				\addplot+[only marks] table {data/exp-G.dat};
				\addlegendentry{Experimental Data}
				\addplot+[no markers, thick, domain = 0.01:20] {G(x)};
				\addlegendentry{Numerical Result}
				\addplot+[no markers, thick, dashed, domain = 0.1:20.0] {2 * sqrt(2) / (3 * sqrt(pi))};
				\addlegendentry{Free Stream Limit}
			\end{loglogaxis}
		\end{tikzpicture}
		\caption{Comparison of our numerical result with experimental data.
			Shown is the normalized mass flow rate as a function of the Knudsen number
			for the mass flow problem of helium in a channel with circular cross section.
			The dashed line shows the collisionless limit, $\kn \to \infty$.
			The experimental data is taken from~\cite{perrier2011}.}
		\label{fig:G-flux-experimental}
	\end{figure}

	\begin{figure}
		\centering
		\begin{subfigure}[b]{0.48\textwidth}
			\centering
			\begin{tikzpicture}
				\begin{axis}[
						colormap name = mybluered,
						width = 6.6cm,
						view = {0}{90},
						axis equal image,
						xlabel = $x_1 / D$,
						ylabel = $x_2 / D$,
						xmin = -0.52, xmax = 0.52,
						ymin = -0.52, ymax = 0.52,
						colorbar,
						colorbar style = {
								ylabel = {$|\bm u|$ in \unit{\meter\per\second}},
							},
						point meta min = 1.0,
						point meta max = 1.9,
					]
					\addplot3[patch, shader=interp] table {physical_data.dat};
					\addplot3[contour prepared, contour/draw color=black, very thin,
						contour/labels=false] table {data/velocity-contours-kn044.dat};
				\end{axis}
			\end{tikzpicture}
			\caption{Velocity magnitude over the cross-section with iso-velocity
				lines from \SI{1.1}{\meter\per\second} to \SI{1.8}{\meter\per\second}
				in steps of \SI{0.1}{\meter\per\second}.}
			\label{fig:velocity-profile-2d}
		\end{subfigure}
		\hfill
		\begin{subfigure}[b]{0.48\textwidth}
			\centering
			\begin{tikzpicture}
				\begin{axis}[
						width = 7.4cm,
						height = 6.1cm,
						xlabel = {$r / D$},
						ylabel = {$|\bm u|$ in \unit{\meter\per\second}},
						xmin = 0, xmax = 0.55,
						ymin = 0, ymax = 2,
					]
					\addplot[only marks, mark=*, mark size=0.4pt, myblue, opacity=0.6]
					table[x=r, y=u] {data/velocity-radial-kn044.dat};
					\draw[dashed, gray] (axis cs:0.5,0) -- (axis cs:0.5,2)
					node[pos=0.95, anchor=north east, black!60] {\footnotesize wall};
					\draw[<->, >=stealth] (axis cs:0.5,0.02) -- (axis cs:0.5,0.99)
					node[midway, anchor=east, xshift=-2pt] {\footnotesize slip};
				\end{axis}
			\end{tikzpicture}
			\caption{Velocity magnitude at all mesh vertices as a function of the
				distance $r$ from the center of the pipe.}
			\label{fig:velocity-profile-radial}
		\end{subfigure}
		\caption{Velocity profile for $\kn = 0.44$.
			Coordinates are displayed in nondimensional units while the magnitude uses the physical units of the problem.
			The iso-velocity lines in~(a) are concentric circles:
			the profile is rotationally symmetric with the fastest flow along the center line of the pipe.
			Accordingly, the vertex values in~(b) collapse onto a single curve as a function of the radius.
			As expected for a flow with a Knudsen number in the transitional regime,
			there is significant slip at the boundary relative to the maximum speed of the gas:
			the velocity does not drop to zero at the wall.
		}
		\label{fig:velocity-profile}
	\end{figure}

	\subsection{Heat Transfer Between Parallel Walls}

	As a final example, we compute the stationary distribution of the gas between
	two infinite parallel walls with temperatures $\theta_1$ on the left
	and $\theta_2$ on the right.
	In this case, the stationary Boltzmann equation simplifies to
	\begin{equation}\label{eq:parallel-walls}
		v_1 \partial_{x} f(x, \bm v)
		= \frac{1}{\kn} \mathcal C\big(f(x, \cdot)\big)(\bm v),
		\quad (x, \bm v) \in (0, 1) \times \mathbb R^3,
	\end{equation}
	with diffuse reflection boundary conditions of temperatures $\theta_1$ and $\theta_2$
	at $0$ and $1$, respectively.

	Owing to the one-dimensional formulation, important moments such as the bulk velocity $\bm u$
	and the heat flux $\bm q$ are constant.

	As a consequence of the Galilean invariance of the problem statement in
	the $x_2$-$x_3$ plane, the effective dimension of the velocity space reduces from three to two.
	\begin{lemma}\label{lem:invariance-parallel-walls}
		The solution to \eqref{eq:parallel-walls} is of the form
		\begin{equation}
			f(x, \bm v) = g(x, v_1, |\bm v|^2),
		\end{equation}
		that is, it only depends on the first velocity component and the square of its magnitude.
	\end{lemma}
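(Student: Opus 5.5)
The plan is a symmetry argument combined with uniqueness, as in the discussion of the Galilean symmetry in \cref{sec:BE}. Consider the subgroup of orthogonal transformations that fix the first coordinate axis, $\bm O = \operatorname{diag}(1, \bm R)$ with $\bm R \in O(2)$. For such $\bm O$, define $\mathcal T_{\bm O} f(x, \bm v) = f(x, \bm O^{\top}\bm v)$. The goal is to show that $\mathcal T_{\bm O} f$ solves the same boundary value problem \eqref{eq:parallel-walls} whenever $f$ does. Uniqueness of the stationary solution, which we assume as the paper does when it states that ``provided solutions are unique, $f$ inherits the invariances of its data'', then gives $f = \mathcal T_{\bm O} f$ for every such $\bm O$.

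The first step is the transport term. Since $(\bm O^{\top}\bm v)_1 = v_1$, we have $v_1 \partial_x (\mathcal T_{\bm O} f) = \mathcal T_{\bm O}(v_1 \partial_x f)$.

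The second step is the collision term. Here I invoke the rotational part of the Galilean symmetry \eqref{eq:galil_symm} with $\bm z = 0$, giving $\mathcal C(\mathcal T_{\bm O} f) = \mathcal T_{\bm O}\mathcal C(f)$. This follows by substituting $\bm v_* \mapsto \bm O \bm v_*$ and $\bm\sigma \mapsto \bm O\bm\sigma$ in \eqref{eq:collision}. The substitution is valid because $B$ depends only on $|\bm v - \bm v_*|$ and $\bm{\hat u}\cdot\bm\sigma$, both of which are invariant, and because \eqref{eq:post-collional-velocities} is equivariant under rotations.

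The third step is the boundary conditions, which I expect to be the main point requiring care. The walls have normals $\bm n = \mp \bm e_1$, which $\bm O$ fixes, so the sets $\mathbb R^3_{\mathrm{in}}$ and $\mathbb R^3_{\mathrm{out}}$ are mapped to themselves. The diffuse data \eqref{eq:bc-diffuse} use wall Maxwellians $M_{1,\bm 0,\theta_i}$, which are radial and hence invariant. The normalization $\rho_{\mathrm{out}}$ in \eqref{eq:rho-out} is an integral of $|v_1| f$ over $\mathbb R^3_{\mathrm{out}}$; changing variables $\bm v \mapsto \bm O\bm v$ shows it is unchanged. So the diffuse reflection operator commutes with $\mathcal T_{\bm O}$, and $\mathcal T_{\bm O} f$ satisfies the same boundary conditions.

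It remains to conclude the form of $f$. Invariance under all $\bm R \in O(2)$ means that for fixed $x$ and $v_1$, $f$ depends on $(v_2, v_3)$ only through $v_2^2 + v_3^2$. This is because $O(2)$ acts transitively on circles in the $(v_2,v_3)$-plane. Since $v_2^2 + v_3^2 = |\bm v|^2 - v_1^2$, we can write $f(x,\bm v) = g(x, v_1, |\bm v|^2)$.

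The only genuinely delicate ingredient is uniqueness of the nonlinear stationary kinetic boundary value problem. I would not prove it; instead I would state it as the standing assumption that the paper already uses. If a weaker statement is preferred, the same argument shows that the set of solutions is $O(2)$-invariant, and that the symmetrized solution lies in the asserted class.
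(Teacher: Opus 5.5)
Your proposal is correct and follows essentially the paper's own reasoning: the paper gives no separate proof, but justifies the lemma by the rotational (Galilean) symmetry about the $x_1$-axis together with the uniqueness assumption stated in \cref{sec:BE}, which you work out step by step for the transport term, the collision term and the diffuse boundary data. The only point to add is that diffuse reflection conserves mass, so uniqueness must be understood for a prescribed total mass. Since $\mathcal T_{\bm O}$ preserves total mass, your conclusion is unaffected.
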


	\begin{lemma}\label{lem:cnst-col-inv}
		For a collision invariant $\psi$ the corresponding flux of the solution $f$
		to~\eqref{eq:parallel-walls},
		\[
			F[\psi](x) = \int_{\mathbb R^3} v_1 \psi(\bm v) f(x, \bm v) \, \dd \bm v,
			\quad x \in (0, 1)
		\]
		is a constant function.
	\end{lemma}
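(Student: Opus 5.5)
The plan is to test the stationary equation \eqref{eq:parallel-walls} against the collision invariant $\psi$ and integrate over velocity space. Fix $\psi \in \mathscr I$. For almost every $x \in (0,1)$, multiply \eqref{eq:parallel-walls} by $\psi(\bm v)$ and integrate over $\bm v \in \mathbb R^3$. Since $\psi$ does not depend on $x$, the left-hand side becomes $\int_{\mathbb R^3} \psi\, v_1 \partial_x f \, \dd \bm v$. Formally this equals $\frac{\dd}{\dd x} F[\psi](x)$.

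The right-hand side is $\kn^{-1}\int_{\mathbb R^3} \psi\, \mathcal C(f(x,\cdot))\, \dd\bm v$. It vanishes by the characterization \eqref{eq:CollInvariant} of collision invariants, which follows from the symmetrized weak form \eqref{eq:symm_coll} together with \eqref{eq:particle_conserv}. This step needs $f(x,\cdot) \in \mathscr D(\mathcal C)$, so that polynomials of degree at most two are admissible test functions. We therefore get $\frac{\dd}{\dd x}F[\psi] = 0$ on $(0,1)$, in the sense of distributions. A distribution on an interval with vanishing derivative is constant, so $F[\psi]$ agrees almost everywhere with a constant, and it is constant if $F[\psi]$ is continuous.

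The only genuine issue is justifying the exchange of $\partial_x$ with the velocity integral. The cleanest route is to work in the weak formulation from the outset. For any $\chi \in C_c^\infty(0,1)$, test \eqref{eq:parallel-walls} with $\chi(x)\psi(\bm v)$ and integrate by parts in $x$; no boundary terms arise because $\chi$ has compact support. This gives
\[
-\int_0^1 \chi'(x) F[\psi](x)\,\dd x = \frac{1}{\kn}\int_0^1 \chi(x)\int_{\mathbb R^3}\psi\,\mathcal C(f)\,\dd\bm v\,\dd x = 0 .
\]
The exchange is legitimate provided $v_1\psi f \in L^1_{\mathrm{loc}}((0,1);L^1(\mathbb R^3))$, which holds for $f(x,\cdot) \in L^1_3$. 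Under this integrability assumption on the solution, $F[\psi]$ has zero weak derivative and is therefore a.e.\ constant.

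The hypotheses needed are thus $f(x,\cdot)\in\mathscr D(\mathcal C)$ and enough moment integrability for $F[\psi]$ to be defined. Both are implicit in the paper's notion of solution.
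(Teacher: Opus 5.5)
Your proposal is correct and follows the paper's proof. The paper tests \eqref{eq:parallel-walls} with $\psi$, integrates over $\bm v$, and uses $\int_{\mathbb R^3}\psi\,\mathcal C(f)\,\dd\bm v=0$ to conclude $\frac{\dd}{\dd x}F[\psi]=0$ on $(0,1)$, differentiating under the integral formally; your distributional version with test functions $\chi(x)\psi(\bm v)$ makes that interchange rigorous under the moment assumption $f(x,\cdot)\in L^1_3$ that you state.
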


	\begin{proof}
		Multiplying \eqref{eq:parallel-walls} by a collision invariant $\psi$, integrating over $\mathbb R^3$, and using $\int_{\mathbb R^3} \psi(\bm v) \mathcal C\big( f(x, \cdot) \big) \, \dd \bm v = 0$ gives $\frac{\dd}{\dd x} F[\psi](x) = \int_{\mathbb R^3} v_1 \psi(\bm v) \partial_x f(x, \bm v) \, \dd \bm v = 0$, so $F[\psi]$ is constant on $(0, 1)$.
	\end{proof}

	As further consequences we have

	\begin{corollary}\label{cor:zero-velo}
		The bulk velocity $\bm u$ of the stationary solution vanishes.
	\end{corollary}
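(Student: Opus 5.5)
We show that the tangential components $u_2,u_3$ vanish by symmetry and that the normal component $u_1$ vanishes by the mass-flux balance.

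\emph{Tangential components.} By \cref{lem:invariance-parallel-walls}, $f(x,\bm v)=g(x,v_1,|\bm v|^2)$. Hence $f$ is invariant under the reflection $(v_1,v_2,v_3)\mapsto(v_1,-v_2,v_3)$, and likewise under $v_3\mapsto -v_3$. For $j=2,3$ the integrand $v_j f$ is therefore odd in $v_j$, so
\[
\rho u_j=\int_{\mathbb R^3} v_j f\,\dd\bm v = 0 .
\]
This holds pointwise in $x$, provided $f\in L^1_1$.

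\emph{Normal component.} Apply \cref{lem:cnst-col-inv} with the collision invariant $\psi=1$. The mass flux
\[
F[1](x)=\int_{\mathbb R^3} v_1 f\,\dd\bm v=\rho u_1
\]
is then constant on $(0,1)$. To identify the constant, evaluate it at a wall, say $x=0$, with outward normal $\bm n=-\bm e_1$. The diffuse reflection condition \eqref{eq:bc-diffuse} with the choice of $\rho_\text{out}$ in \eqref{eq:rho-out} balances the outgoing flux exactly against the incoming flux:
\[
\int_{\mathbb R^3_\text{out}}|\bm v\cdot\bm n| f\,\dd\bm v=\int_{\mathbb R^3_\text{in}}|\bm v\cdot\bm n| f_w\,\dd\bm v .
\]
On $\mathbb R^3_\text{in}$ the boundary condition \eqref{eq:bc} gives $f=f_w$. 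Therefore
\[
\bm n\cdot\int_{\mathbb R^3}\bm v f\,\dd\bm v=\int_{\mathbb R^3_\text{out}}|\bm v\cdot\bm n| f\,\dd\bm v-\int_{\mathbb R^3_\text{in}}|\bm v\cdot\bm n| f\,\dd\bm v=0,
\]
so $F[1](0)=0$. Since $F[1]$ is constant, $\rho u_1\equiv 0$ on $(0,1)$. Because $\rho>0$ (as $f>0$), we conclude $u_1=0$, and together with the first step $\bm u=0$.

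\emph{Main obstacle.} The only delicate point is passing the constant flux $F[1]$ to the boundary: this requires $F[1]$ to extend continuously to $x=0$ and the traces of $f$ to make sense there. We assume the solution is regular enough for this; the zero-flux identity itself is immediate from \eqref{eq:rho-out}.
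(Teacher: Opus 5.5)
Your proof is correct and follows the same route as the paper: $u_2=u_3=0$ from the symmetry in \cref{lem:invariance-parallel-walls}, and $u_1=0$ because \cref{lem:cnst-col-inv} makes $\rho u_1$ constant in $x$ while mass conservation at the diffusely reflecting walls forces that constant to vanish. Your explicit zero-net-normal-flux computation from \eqref{eq:rho-out} and the boundary condition just spells out the paper's one-line remark that ``the boundary conditions conserve mass,'' and your flagged regularity assumption (traces of $f$ and continuity of $F[1]$ up to the wall) is one the paper also leaves tacit.
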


	\begin{remark}
		The components $u_2 = u_3 = 0$ by \cref{lem:invariance-parallel-walls}, while
		\cref{lem:cnst-col-inv} makes the momentum $\rho u_1$ constant in $x$; since the boundary
		conditions conserve mass, this constant vanishes, and hence $u_1 = 0$ for strictly positive density.
	\end{remark}

	\begin{corollary}\label{cor:zero-heat-flux}
		The heat flux
		\[
			\bm q = \frac{1}{2} \int_{\mathbb R^3} (\bm v - \bm u) |\bm v - \bm u|^2 f(t, \bm x, \bm v)
			\, \dd \bm v
		\]
		of the stationary solution is constant.
		In particular, $q_2 = q_3 = 0$.
	\end{corollary}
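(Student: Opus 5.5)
The heat flux is a combination of fluxes of collision invariants, so \cref{lem:cnst-col-inv} makes it constant; the in-plane components then vanish by the symmetry of \cref{lem:invariance-parallel-walls}. In outline: since the solution is stationary and, by \cref{cor:zero-velo}, has vanishing bulk velocity $\bm u = \bm 0$, the heat flux reduces to
\[
	\bm q(x) = \frac{1}{2} \int_{\mathbb R^3} \bm v \, |\bm v|^2 f(x, \bm v) \, \dd \bm v .
\]
I will treat the first component and the transverse components separately.

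For the first component, $q_1(x) = \int_{\mathbb R^3} v_1 \, \tfrac{1}{2}|\bm v|^2 f(x, \bm v)\, \dd \bm v = F[\psi](x)$ with $\psi(\bm v) = |\bm v|^2/2$. This $\psi$ is a collision invariant by \eqref{eq:particle_conserv}, so \cref{lem:cnst-col-inv} gives directly that $q_1$ is constant on $(0,1)$. If one prefers not to use $\bm u = \bm 0$ at this point, the same conclusion holds in general. Expanding $(\bm v - \bm u)|\bm v - \bm u|^2$ writes $q_1$ as a combination of fluxes $F[1]$, $F[v_j]$ and $F[|\bm v|^2]$, with coefficients depending on $\bm u$, together with terms involving $\int v_1 v_j f\,\dd\bm v$. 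Those extra terms drop out once $\bm u = \bm 0$, so \cref{cor:zero-velo} is the cleanest route.

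For the transverse components, \cref{lem:invariance-parallel-walls} gives $f(x, \bm v) = g(x, v_1, |\bm v|^2)$. The integrand $v_2 |\bm v|^2 g(x, v_1, |\bm v|^2)$ is odd under the reflection $v_2 \mapsto -v_2$, which preserves $\dd \bm v$, $v_1$ and $|\bm v|^2$. Hence $q_2 = 0$ pointwise in $x$, and likewise $q_3 = 0$. These components are therefore trivially constant, and $\bm q$ is constant.

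The only delicate point is integrability: we need $\int |\bm v|^3 f \, \dd \bm v < \infty$, and we need differentiation under the integral to be legitimate in \cref{lem:cnst-col-inv}. Both follow from $f \in \mathscr D(\mathcal C)$ together with sufficient velocity moments, as already implicitly assumed in that lemma. The substantive input is the symmetry lemma.
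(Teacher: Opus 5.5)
Your proposal is correct and follows the paper's own argument: with $\bm u = \bm 0$ from \cref{cor:zero-velo}, $q_1$ is the flux $F[|\bm v|^2/2]$ of a collision invariant and hence constant by \cref{lem:cnst-col-inv}, while $q_2$ and $q_3$ vanish because their integrands are odd in $v_2$ (resp.\ $v_3$) under the symmetry of \cref{lem:invariance-parallel-walls}. Drop the aside claiming the conclusion ``holds in general'' without $\bm u = \bm 0$, since it is wrong. The terms $\int v_1 v_j f \, \dd \bm v$ are themselves the constant fluxes $F[v_j]$, so they are not the problem; the real obstruction is that the expansion produces $x$-dependent coefficients in $\bm u$ and non-flux density moments such as $u_1 \int |\bm v|^2 f \, \dd \bm v$, so constancy of $q_1$ genuinely relies on $\bm u = \bm 0$.
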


	\begin{remark}
		With $\bm u = 0$ from \cref{cor:zero-velo}, the component $q_1$ is the one-dimensional flux of the
		collision invariant $|\bm v|^2$, hence constant by \cref{lem:cnst-col-inv}; the integrands of $q_2, q_3$
		are odd in $(v_2, v_3)$, so $q_2 = q_3 = 0$ by \cref{lem:invariance-parallel-walls}.
	\end{remark}
	First, we analyze the results for the bulk velocity at a fixed Knudsen number of $\kn = 1/10$.
	Results for different Knudsen numbers show the same qualitative behavior.
	By \cref{cor:zero-velo}, the bulk velocity should vanish for the steady-state solution.
	With $p = 0$, the velocity does not vanish in steady state since boundary and interface
	fluxes are imposed weakly.
	As displayed in \cref{fig:convergence-velo-heat-transfer},
	the error decays linearly under mesh refinement, as predicted by general approximation
	results for DG basis functions.
	Furthermore, the error scaling is independent of the velocity discretization, that is, the
	maximal polynomial degree $K$ or the choice of the closure $N$.
	The results improve markedly with $p = 1$.
	In the bulk, the solution converges at optimal second order.
	Near the boundary, however, the solution starts to oscillate,
	resulting in a suboptimal convergence rate under spatial mesh refinement.
	The oscillations do not contradict our stability result in \cref{them:entropy-stability},
	which states stability in an integral sense that does not imply pointwise control over the solution.
	We note, moreover, that \cref{them:entropy-stability} presupposes a single background Maxwellian
	conforming to the imposed boundary data (cf.~\cref{rem:maxwellian-bc}), a hypothesis not strictly
	satisfied by the two-temperature configuration $\theta_1 \neq \theta_2$.
	The integral bound is nonetheless observed to hold:
	the $L_2$-norm of the bulk velocity remains bounded under mesh refinement in space
	and increased polynomial degree in velocity space.

	\begin{figure}
		\centering
		\begin{tikzpicture}
			\begin{loglogaxis}[
					xlabel = $1 / h$,
					ylabel = $\|u_1\|_{L_2}$,
					legend style = {legend pos = outer north east},
				]
				\addplot+[mark=*] coordinates {
						(128, 0.000795009)
						(256, 0.000400783)
						(512, 0.000201834)
						(1024, 0.000101886)
					};
				\addlegendentry{$p = 0$}
				\draw (axis cs:256, 4.5e-4) -| (axis cs:512, 2.25e-4) node[above, pos=0.25] {1} node[right, pos=0.75] {-1}  -- cycle;

				\addplot+[mark=*] coordinates {
						(128, 1.59035e-05)
						(256, 1.13858e-05)
						(512, 8.09144e-06)
						(1024, 5.73328e-06)
					};
				\addlegendentry{$p = 1$}
				\draw (axis cs:256, 1.3e-5) -| (axis cs:512, 0.92e-5) node[above, pos=0.25] {1} node[right, pos=0.75] {$-1/2$}  -- cycle;

				\addplot+[mark=*] coordinates {
						(128, 4.63625e-08)
						(256, 1.19023e-08)
						(512, 3.02159e-09)
						(1024, 7.62284e-10)
					};
				\addlegendentry{$p = 1$, bulk only}
				\draw (axis cs:256, 1.4e-8) -| (axis cs:512, 0.35e-8) node[above, pos=0.25] {1} node[right, pos=0.75] {$-2$}  -- cycle;
			\end{loglogaxis}
		\end{tikzpicture}
		\caption{Convergence of the bulk velocity $u_1$ for the DGFE moment method with $\kn=1/10$ under uniform mesh refinement in the spatial domain.
			The velocity discretization is fixed with $N = 1$ and $K = 9$.
			The norm in the bulk is computed on the interval $(1/8, 7/8)$, i.e., it excludes the boundary layer.}
		\label{fig:convergence-velo-heat-transfer}
	\end{figure}
	To investigate the heat flux as a function of the Knudsen number,
	we numerically validate the linear theory in~\cite{sone2007} for small temperature gradients.
	Its prediction, also known as the Sherman--Lees formula, gives the constant heat flux in steady state as
	\begin{equation}\label{eq:sherman-lees}
		q_1 = \frac{1}{c_0 + c_1 / \kn},
	\end{equation}
	for two constants $c_0$ and $c_1$, cf.~\cite[Equation~(4.18c)]{sone2007}.
	This relationship has been well tested in experimental studies; see, for instance,~\cite{trott2011experimental}
	and the references cited therein.
	The Sherman--Lees formula~\eqref{eq:sherman-lees} can be recast as
	\begin{equation}
		\frac{1}{q_1} = c_0 + c_1 \frac{1}{\kn},
	\end{equation}
	which reveals that $1 / q_1$ depends linearly on $1 / \kn$.
	We use this relation to infer the values of $c_0$ and $c_1$
	by performing a linear least squares fit to our numerical data.
	For $\theta_1 = 1.25$ and $\theta_2 = 1$,
	the least squares fit over a range of Knudsen numbers from $\num{e-3}$ to $\num{e3}$
	for a hard sphere gas yields
	\begin{equation}\label{eq:coeff-sherman-lees}
		c_0 = \num{5.36159}, \quad c_1 = \num{1.26286},
	\end{equation}
	with coefficient of alienation $1 - R^2 = \num{3.82e-8}$.
	Our numerical data together with the Sherman--Lees prediction is displayed in \cref{fig:heat-flux}.
	Both agree very well over the full range of Knudsen numbers,
	with a slight deviation for large Knudsen numbers.
	This deviation is explained by the nonlinear transformation of the numerical data:
	large Knudsen numbers accumulate at $0$, increasing the sensitivity
	of the linear least squares fit.

	\begin{figure}
		\centering
		\begin{tikzpicture}
			\begin{loglogaxis}
				[
					xlabel = {$\kn$},
					ylabel = $q_1$,
					legend pos = south east,
					declare function = {
							c0 = 5.36159;
							c1 = 1.26286;
							Qtilde(\x) = c1 * \x + c0;
							Q(\kn) = 1.0 / Qtilde(1.0 / \kn);
							FreeStream = 0.21058728585439238;
						},
				]
				\addplot+[mark=*, only marks] coordinates {
						(0.001,0.0007885111844539124)
						(0.01,0.007591175010632328)
						(0.1,0.05552309459479234)
						(1,0.15412853423268813)
						(10,0.19453747977625657)
						(1000,0.2005535316095865)
					};
				\addlegendentry{Numerical Result}
				\addplot+[domain = 5e-4:2e3, thick] { Q(x) };
				\addlegendentry{Sherman--Lees Fit}
				\addplot+[domain = 5e-4:2e3, dashed, thick] { FreeStream };
				\addlegendentry{Free Stream Limit}

			\end{loglogaxis}
		\end{tikzpicture}
		\caption{Numerical data on the heat flux between parallel walls of normalized temperatures $\theta_1 = 1.25$ and $\theta_2 = 1$.
			The parameters for the Sherman--Lees formula~\eqref{eq:sherman-lees} are given in~\eqref{eq:coeff-sherman-lees}.}
		\label{fig:heat-flux}
	\end{figure}

	\section{Conclusions and Outlook}\label{sec:outlook}
	In this work, we have developed and analyzed a robust Galerkin methodology for the Boltzmann equation equipped with bilinear collision operators. By utilizing $\varphi$-divergences for moment closures, we formulated a sequence of families of hierarchies of fluid dynamic equations that rigorously bridge the kinetic and macroscopic fluid dynamic regimes. While such closures intrinsically inherit fundamental properties, such as Galilean symmetries and conservation laws, from the continuous Boltzmann equation, the exact entropy production inequality may not be satisfied. To explicitly quantify and address this thermodynamic discrepancy, we introduced a convergent sequence of approximate collision operators that satisfy an approximate entropy production inequality. Moreover, we proved that these constructed operators strictly preserve Galilean symmetries, exact conservation laws, and the correct physical relaxation towards local thermodynamic equilibrium. Our results mathematically establish that $\varphi$-divergences serve as valid approximate entropies for solutions of \eqref{eq:ibvBE}.

	Furthermore, we demonstrated that, conditioned on satisfying an entropy production inequality, the $\varphi$-divergence moment equations form symmetric-dissipative hyperbolic systems, ensuring local-in-time well-posedness. Applying a fully implicit space-time discontinuous Galerkin (DG) finite element discretization yields a numerical scheme that is rigorously entropy-stable with respect to the associated approximate operator $\mathcal C_N$ (the reported computations employ the full operator $\mathcal C$, cf.~\cref{sec:approx_coll}). We validated our deterministic solver against established analytical benchmarks and experimental data, focusing on low-speed, rarefied flow regimes such as isothermal channel flows and heat transfer between parallel plates. Beyond these low-speed settings, we computed the supersonic nozzle flow of argon and validated the moment approximation against a DSMC reference with identical collision-model parameters. For this strongly nonequilibrium expansion, the nonlinear renormalization proves essential: the linear closure fails to converge to a steady state, whereas the nonlinear closure converges robustly.
	Looking forward, we aim to extend this framework to further high Mach number test cases, building on the supersonic nozzle flow of argon presented in this work. In such highly non-equilibrium regimes, deploying nonlinear approximations of the exponential renormalization will be essential for maintaining the stability and accuracy of the numerical approximation. Additionally, future research will integrate residual-based stabilization techniques to mitigate localized oscillations near flow discontinuities and to improve pointwise control over higher-order discrete polynomial solutions.

	\section*{Acknowledgements}
	TK acknowledges funding received from the European Union's Horizon 2020 research and innovation programme under the Marie Sk\l{}odowska-Curie grant agreement No 899987. This work is also supported by the 14AMI project of the Chips Joint Undertaking and its members, including the top-up funding by RVO (The Netherlands Enterprise Agency).

	\appendix
	\section{Details on Approximate Collision Operators}\label{sec:details-approximate-collision}
	The existence and uniqueness of the Lagrange parameters $\bm \lambda$ in the reconstruction problem
	follow from the finite-dimensional Minty--Browder theorem~\cite[Theorem 9.14-1]{ciarlet2013},
	whereby a continuous, coercive and strictly monotone map $F : \mathbb R^m \to \mathbb R^m$ is bijective.

	\begin{proof}[Proof of \cref{thm:reconstruction-problem}]
		We prove that the function
		\begin{equation}
			F: \mathbb R^m \to \mathbb R^m,~\bm \mu \mapsto \int_{\mathbb R^3} \bm m \beta_N(\bm \mu \cdot \bm m) \, \dd \bm v
		\end{equation}
		satisfies the assumptions of the Minty--Browder theorem.
		First, we note that $F$ is continuous and strictly monotone,
		\begin{equation}
			(\bm \mu - \bm \lambda) \cdot \big( F(\bm \mu) - F(\bm \lambda) \big)
			=
			\int_{\mathbb R^3} M \big( \bm \mu \cdot \bm m - \bm \lambda \cdot \bm m \big) \left\{
			\left( 1 + \frac{\bm \mu \cdot \bm m}{N} \right)^N - \left( 1 + \frac{\bm \lambda \cdot \bm m}{N} \right)^N
			\right\} \,
			\dd \bm v \geq 0,
		\end{equation}
		since $N$ is odd and thus $z \mapsto (1 + z/N)^N$ is strictly increasing, so equality holds if and only if $\bm \mu = \bm \lambda$.
		To show that $F$ is coercive, we expand $(1 + \bm \mu \cdot \bm m / N)^N$ binomially. With $\bm e_{\bm \mu} = \bm \mu / |\bm \mu|$, the leading term in $|\bm \mu|$ is
		\begin{equation}
			\frac{|\bm \mu|^{N}}{N^N} \int_{\mathbb R^3} (\bm e_{\bm \mu} \cdot \bm m )^{N + 1} M \, \dd \bm v,
		\end{equation}
		whose coefficient $\int_{\mathbb R^3} (\bm e \cdot \bm m )^{N + 1} M \, \dd \bm v$ is strictly positive for every $\bm e \in \mathbb S^{m - 1}$ since $N$ is odd, with the infimum attained on the compact sphere $\mathbb S^{m - 1}$.
		As the remaining terms are $O(|\bm \mu|^{N-1})$, it follows that $\bm \mu \cdot F(\bm \mu) / |\bm \mu| \to \infty$ as $|\bm \mu|$ diverges, implying that $F$ is coercive.
	\end{proof}

	To relate $P_N$ to the usual product of two real numbers, we use the fundamental lemma for the
	exponential function~\citep[p.\,104]{koenigsberger2004}: for $z \in \mathbb R$,
	\begin{equation}\label{eq:exponential-limit-zero}
		\lim_{N \to \infty} \left( \frac{z}{N} + o(1/N) \right)^N = 0,
		\qquad
		\lim_{N \to \infty} \left( 1 + \frac{z}{N} + o(1/N) \right)^N = e^z,
	\end{equation}
	both uniformly on compact sets.

	\begin{lemma}\label{lem:ConvergenceP_N}
		For $z_1, z_2 \geq 0$ not both equal to zero,
		\[
			\lim\limits_{N \to \infty} P_N(z_1, z_2) = z_1 z_2.
		\]
		The limit is uniform on compact subsets of $(0, \infty) \times (0, \infty)$.
	\end{lemma}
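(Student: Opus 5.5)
The plan is to write the $N$-th roots via the exponential, $z^{1/N} = e^{(\ln z)/N} = 1 + \frac{\ln z}{N} + o(1/N)$, and feed this into the fundamental lemma \eqref{eq:exponential-limit-zero}. I will split into two cases: both arguments positive, and exactly one argument zero.

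\textbf{Case $z_1, z_2 > 0$.} A Taylor expansion with remainder gives
\[
	z_i^{1/N} = 1 + \frac{\ln z_i}{N} + r_N(z_i), \qquad |r_N(z_i)| \le \frac{(\ln z_i)^2}{2N^2}\, e^{|\ln z_i|/N}.
\]
Hence
\[
	z_1^{1/N} + z_2^{1/N} - 1 = 1 + \frac{\ln z_1 + \ln z_2}{N} + o(1/N),
\]
where the $o(1/N)$ term is uniform as long as $\ln z_1$ and $\ln z_2$ stay bounded. The second limit in \eqref{eq:exponential-limit-zero}, applied with $z = \ln z_1 + \ln z_2$, then yields
\[
	P_N(z_1, z_2) \to e^{\ln z_1 + \ln z_2} = z_1 z_2 .
\]

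\textbf{Case exactly one argument zero.} By symmetry take $z_2 = 0$ and $z_1 > 0$, so $0^{1/N} = 0$. Then
\[
	P_N(z_1, 0) = \big(z_1^{1/N} - 1\big)^N = \left(\frac{\ln z_1}{N} + o(1/N)\right)^N,
\]
and the first limit in \eqref{eq:exponential-limit-zero} shows this tends to $0 = z_1 \cdot 0$. If one prefers a direct argument, it also suffices to note that $|z_1^{1/N} - 1| \le 2|\ln z_1|/N < 1$ for large $N$, so the $N$-th power tends to zero.

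\textbf{Uniformity.} On a compact $K \subset (0,\infty)^2$, the quantities $\ln z_1$ and $\ln z_2$ range over a compact set. The remainder bound above is then uniform, i.e.\ bounded by $C_K/N^2$, so the $o(1/N)$ term is uniform on $K$. The uniform version of \eqref{eq:exponential-limit-zero} on compact sets of $z$ then gives uniform convergence of $P_N$.

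\textbf{Main obstacle.} The only delicate point is making sure the $o(1/N)$ perturbation is uniform, so that the quoted fundamental lemma applies in its uniform form. If needed, I will prove this directly by writing
\[
	\left(1 + \frac{a}{N} + \varepsilon_N\right)^N = \exp\!\left(N \ln\!\left(1 + \frac{a}{N} + \varepsilon_N\right)\right)
\]
with $|\varepsilon_N| \le C/N^2$. Using $\ln(1+x) = x + O(x^2)$, the exponent equals $a + O(1/N)$ uniformly for $a$ in a compact set.
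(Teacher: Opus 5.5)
Your proposal is correct and follows essentially the same route as the paper: expand $z^{1/N} = 1 + (\ln z)/N + o(1/N)$, handle the cases $z_1, z_2 > 0$ and $z_2 = 0 < z_1$ with the second and first limits in \eqref{eq:exponential-limit-zero} respectively, and obtain uniformity on compact subsets of $(0,\infty)^2$ from the uniform version of that lemma. Your explicit remainder bound $C_K/N^2$ (and the direct bound $(2|\ln z_1|/N)^N \to 0$ in the degenerate case) only makes precise the uniformity of the $o(1/N)$ term, which the paper leaves implicit.
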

	\begin{proof}
		The linearization $z^{\frac{1}{N}} = 1 + (\log z)/N + o(1/N)$ as $N \to \infty$ (valid for $z > 0$) gives
		\begin{equation}
			P_N(z_1, z_2) = \big(z_1^{\frac{1}{N}} + z_2^{\frac{1}{N}} - 1\big)^N
			= \left( 1 + \frac{\log z_1 + \log z_2}{N} + o(1 / N) \right)^N.
		\end{equation}
		If $z_1, z_2 > 0$, the second limit in~\eqref{eq:exponential-limit-zero} yields $P_N(z_1, z_2) \to e^{\log z_1 + \log z_2} = z_1 z_2$; if instead $z_2 = 0$ and $z_1 > 0$, then $z_2^{\frac{1}{N}} = 0$ cancels the $-1$, leaving $P_N(z_1, z_2) = (\log z_1 / N + o(1/N))^N \to 0 = z_1 z_2$ by the first limit in~\eqref{eq:exponential-limit-zero}.
		Since the second limit in~\eqref{eq:exponential-limit-zero} is uniform on compact sets,
		the convergence is uniform on compact subsets of $(0, \infty) \times (0, \infty)$.
	\end{proof}
	Weak forms of approximate collision operators follow those of the bilinear collision operator:
	\begin{equation}\label{def:IncompleteWeak}
		\begin{aligned}
			\int_{\mathbb R^3} \psi \, \mathcal{C}_N(f) \, \dd \bm v & =
			\begin{aligned}[t]
				\phantom{-}\frac{1}{2} \int_{\mathbb R^3} \int_{\mathbb R^3} \int_{\mathbb S^2}
				B(\bm v - \bm v_*, \bm \sigma) \background \, \background_* \,
				P_N\left( \frac{f}{\background}, \frac{f_*}{\background_*} \right)
				\Big(
				\acute \psi + \acute \psi_* - \psi - \psi_*
				\Big)
				\, \dd \bm \sigma \, \dd \bm v_* \, \dd \bm v
			\end{aligned} \\
			                                                         & =
			\begin{aligned}[t]
				-\frac{1}{4} \int_{\mathbb R^3} \int_{\mathbb R^3} \int_{\mathbb S^2}
				 & B(\bm v - \bm v_*, \bm \sigma) \background \, \background_*
				\left( P_N\left( \frac{\acute f}{\acute{\background}}, \frac{\acute f_*}{\acute{\background}_*} \right) - P_N\left( \frac{f}{\background}, \frac{f_*}{\background_*} \right) \right) \\
				 & \times \Big(
				\acute \psi + \acute \psi_* - \psi - \psi_*
				\Big)
				\, \dd \bm \sigma \, \dd \bm v_* \, \dd \bm v.
			\end{aligned}
		\end{aligned}
	\end{equation}

	For $f \in \mathscr D(\mathcal C)$, $\mathcal C_N(f)$ possesses moments of arbitrary order.
	This is a consequence of the following lemma.

	\begin{lemma}\label{lem:BoundP_N}
		For $z_1, z_2 \geq 0$ and $N \in \mathbb N$ odd,
		\begin{equation}
			|P_N(z_1, z_2)| \leq \max\{z_1, 1 \} \max\{z_2, 1\}.
		\end{equation}

		\begin{proof}
			Since the left- and the right-hand side of the inequality are symmetric in their arguments,
			it suffices to prove the inequality for the three cases, $z_1, z_2 \geq 1$,  $z_1, z_2 \in [0, 1]$ and $z_1 \in [0,1], z_2 \geq 1$.
			We begin with the last case.
			Then,
			\begin{equation}
				0 \leq P_N(z_1, z_2) = \big(z_1^{\frac{1}{N}} + z_2^{\frac{1}{N}} - 1\big)^N \leq \big(1 + z_2^{\frac{1}{N}} - 1\big)^N = z_2.
			\end{equation}
			For $z_1, z_2 \in [0,1]$,
			\begin{equation}
				P_N(z_1, z_2) \leq (1 + 1 - 1)^N = 1,
			\end{equation}
			and, as $N$ is odd,
			\begin{equation}
				P_N(z_1, z_2) \geq (0 + 0 - 1)^N = -1.
			\end{equation}
			For $z_1, z_2 \geq 1$, factoring out $z_1 z_2$ gives $P_N(z_1, z_2) = z_1 z_2 \big( (1/z_1)^{1/N} + (1/z_2)^{1/N} - (1/(z_1 z_2))^{1/N} \big)^N$, so with $a = z_1$, $b = z_2 \geq 1$ the bound $P_N(z_1, z_2) \leq z_1 z_2$ follows from
			\[
				\frac{1}{a} + \frac{1}{b} - \frac{1}{a b}
				= 1 - \Big( 1 - \frac{1}{a} \Big)\Big( 1 - \frac{1}{b} \Big) \leq 1.
			\]
		\end{proof}
	\end{lemma}
	Therefore, the domain and codomain of $\mathcal C_N$ are well-defined, since by \cref{lem:BoundP_N},
	\begin{equation}
		\left| \background \background_*
		P_N\left( \frac{f}{\background}, \frac{f_*}{\background_*} \right) \right| \leq \max\{\background, f \} \max\{\background_*, f_* \}
	\end{equation}
	and both $f$ and $\background$ lie in $\mathscr D(\mathcal C)$.

	We now turn to the proofs of the main statements in \cref{sec:approx_coll}.
	\begin{proof}[Proof of \cref{thm:galilean-approximate-collision}]
		Applying $\mathsf T$ to $\mathcal C_N(f)$, substituting $\bm v_* \mapsto \bm v_* - \bm u$, and using $\mathsf T B = B$ for the collision weight and $\mathsf T M = M$ for the background gives
		\begin{align*}
			\mathsf T\mathcal C_N(f) = \int\limits_{\mathbb R^3}\int\limits_{\mathbb S^2} B\, \background \, \background_*
			\left( P_N\left( \frac{\mathsf T\acute f}{\acute{\background}}, \frac{\mathsf T \acute f_*}{\acute{\background}_*} \right) - P_N\left( \frac{\mathsf Tf}{\background}, \frac{\mathsf Tf_*}{\background_*} \right) \right)
			\, \dd \bm \sigma \, \dd \bm v_* = \mathcal C_N(\mathsf Tf),
		\end{align*}
		thus proving the desired equality.
	\end{proof}

	To characterize the collision invariants of $\mathcal C_N$, we follow the strategy used in \citep{cercignani1990} to determine the collision invariants of $\mathcal C$, which requires computing the linearized collision operator. To linearize $\mathcal C_N$ about $M$, we need the following lemma concerning the linearization of $P_N$ at $(1,1)$.
	\begin{lemma}\label{lem:LinearizationP_N}
		For small $h_1, h_2 \in \mathbb R$,
		\[
			P_N(1 + h_1, 1 + h_2) = 1 + h_1 + h_2 + o\big(h_1, h_2\big).
		\]
	\end{lemma}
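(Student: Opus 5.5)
The plan is a first-order Taylor expansion of the two factors in the definition of $P_N$. Near $z=1$ the map $z \mapsto z^{1/N}$ is smooth, since $z>0$ there and the sign convention for negative arguments plays no role. Its derivative at $1$ is $1/N$. Hence, for $|h_i|$ small,
\[
(1+h_i)^{\frac{1}{N}} = 1 + \frac{h_i}{N} + r_i(h_i), \qquad r_i(h_i) = o(h_i),
\]
with in fact $|r_i(h_i)| \le C h_i^2$ on, say, $|h_i| \le 1/2$.

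Substituting into the definition gives
\[
P_N(1+h_1,1+h_2) = \Big(1 + \frac{h_1+h_2}{N} + r_1(h_1) + r_2(h_2)\Big)^N .
\]
Call the bracketed quantity $1 + \delta$. By the previous step, $\delta = (h_1+h_2)/N + o(|h_1|+|h_2|)$, so $\delta \to 0$ as $(h_1,h_2) \to 0$.

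Next, expand $w \mapsto w^N$ around $w = 1$. Since $N$ is fixed, this is a polynomial expansion:
\[
(1+\delta)^N = 1 + N\delta + \sum_{k\ge 2}\binom{N}{k}\delta^k = 1 + N\delta + O(\delta^2).
\]
Because $\delta = O(|h_1|+|h_2|)$, the term $O(\delta^2)$ is $o(|h_1|+|h_2|)$. Then
\[
N\delta = h_1 + h_2 + N\big(r_1(h_1)+r_2(h_2)\big),
\]
and $N(r_1+r_2) = o(|h_1|+|h_2|)$. This yields $P_N(1+h_1,1+h_2) = 1 + h_1 + h_2 + o(h_1,h_2)$.

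Equivalently, $P_N$ is $C^1$ near $(1,1)$ with $P_N(1,1)=1$ and $\partial_{z_i}P_N(1,1) = N \cdot 1^{N-1} \cdot \tfrac1N = 1$, so the claim is just the differentiability of $P_N$ at $(1,1)$.

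No step is a genuine obstacle. The only point to watch is that the remainder is controlled uniformly in a neighbourhood. This is needed later when the lemma is applied with $h_i = \varepsilon g/M$ inside the collision integrals to obtain \cref{thm:linearization-approximate-collision}. For that purpose I would record the quantitative form: on $|h_1|,|h_2|\le 1/2$,
\[
|P_N(1+h_1,1+h_2) - 1 - h_1 - h_2| \le C_N (h_1^2+h_2^2),
\]
which follows from Taylor's theorem with Lagrange remainder, since $P_N$ is smooth on $(0,\infty)^2$. This quadratic bound is what permits dominated convergence in the subsequent linearization.
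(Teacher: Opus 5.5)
Your proposal is correct and is essentially the paper's argument. The paper simply notes $P_N(1,1)=1$ and $\nabla P_N(1,1)=(1,1)^{\top}$, which is your closing observation, and your composition of Taylor expansions spells out that same computation. One caution on your aside about later use: the quadratic bound stated only on $|h_i|\le 1/2$ does not by itself control the remainder when $h_i = s\,g/M$ with $g/M$ unbounded in $\bm v$. It does, however, extend to all $(h_1,h_2)\in\mathbb R^2$, because $|P_N(z_1,z_2)|\le 3^{N-1}(|z_1|+|z_2|+1)$ grows only linearly.
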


	\begin{proof}
		One readily checks that $P_N(1,1) = 1$ and $\nabla P_N(1,1) = (1,1)^{\top}$ from which the assertion follows.
	\end{proof}

	\begin{proof}[Proof of \cref{thm:linearization-approximate-collision}]
		We write $f = \background + s g$ with $s \in \mathbb R$ and $g \in \mathscr D(\mathcal C)$.
		By \cref{lem:LinearizationP_N},
		\[
			\background \background_* P_N\left(\frac{f}{\background}, \frac{f_*}{\background_*} \right) = \background \background_* + s \background g_* + s \background_* g + o(s),
		\]
		so that for a test function $\psi : \mathbb R^3 \to \mathbb R$, using the weak form of the Boltzmann collision operator, the definition of $\mathcal L$, and $\int_{\mathbb R^3} \psi \, \mathcal C(\background) \, \dd \bm v = 0$ since Maxwellians reside in the kernel of $\mathcal C$,
		\begin{equation}
			\int_{\mathbb R^3} \psi \, \mathcal C_N(f) \, \dd \bm v
			=
			\int_{\mathbb R^3} \psi \, \mathcal C_N(\background + s g) \, \dd \bm v
			=
			s \int_{\mathbb R^3} \psi \, \mathcal L g \, \dd \bm v
			+
			o(s),
		\end{equation}
		from which we directly identify $\mathcal L$ as the linearization of $\mathcal C_N$ around $\background$.
	\end{proof}

	With the linearization of $\mathcal C_N$ at hand,
	we turn to the characterization of its invariants.

	\begin{proof}[Proof of \cref{thm:conservation-approximate-collision}]
		By the first form of \cref{def:IncompleteWeak}, $\int_{\mathbb R^3} \psi \, \mathcal{C}_N(f) \, \dd \bm v$ is a symmetrized integral of $(\acute \psi + \acute \psi_* - \psi - \psi_*)$, which vanishes for all $f \in \mathscr{D}(\mathcal C)$ whenever $\psi$ is a collision invariant.
		To prove that $\mathscr{I}$ includes all collision invariants of $\mathcal C_N$, let $\psi: \mathbb R^3 \to \mathbb R$
		be a test function with
		\begin{equation}
			\int_{\mathbb R^3} \psi \, \mathcal C_N(f) \, \dd \bm v = 0 \quad \forall f \in \mathscr{D}(\mathcal C).
		\end{equation}
		Choosing $f = \background + s g$ with a small parameter $s \in \mathbb R$ and $g \in \mathscr D(\mathcal C)$, we obtain with \cref{thm:linearization-approximate-collision},
		\[
			0 = \int_{\mathbb R^3} \psi \, \mathcal C_N(f) \, \dd \bm v = s \int_{\mathbb R^3} \psi \, \mathcal L g \, \dd \bm v + o(s).
		\]
		Dividing by $s$ and taking the limit $s \to 0$ while noting that $o(s) / s \to 0$ yields
		\[
			\int_{\mathbb R^3} \psi \, \mathcal L g \, \dd \bm v = 0
		\]
		for all $g \in \mathscr D(\mathcal C)$.
		This means that $\psi$ is a collision invariant of the linearized collision operator.
		It is well known~\citep{cercignani1990} that this implies $\psi \in \mathscr I$
		and hence shows the asserted equivalence.
	\end{proof}

	\begin{proof}[Proof of \cref{thm:dissipation-approximate-collision}]
		With the second weak form of $\mathcal C_N$ in \eqref{def:IncompleteWeak} we obtain
		\begin{align}
			\int_{\mathbb R^3} \varphi_N'\left( \frac{f}{\background} \right) \, \mathcal C_N(f) \, \dd \bm v & =
			\begin{aligned}[t]
				-\frac{1}{4} \int_{\mathbb R^3} \int_{\mathbb R^3} \int_{\mathbb S^2}
				 & B(\bm v - \bm v_*, \bm \sigma) \background \background_*                                                                                                                                    \\
				 & \times \left[
					\varphi_N'\left(\frac{\acute f}{\acute{\background}}\right) + \varphi_N'\left(\frac{\acute f_*}{\acute{\background}_*}\right) - \varphi_N'\left(\frac{f}{\background}\right) - \varphi_N'\left(\frac{f_*}{\background_*}\right)
				\right]                                                                                                                                                                                        \\
				 & \times \left[ P_N\left( \frac{\acute f}{\acute{\background}}, \frac{\acute f_*}{\acute{\background}_*} \right) - P_N\left( \frac{f}{\background}, \frac{f_*}{\background_*} \right) \right]
				\, \dd \bm \sigma \, \dd \bm v_* \, \dd \bm v
			\end{aligned} \\
			                                                                                                  & =
			\begin{aligned}[t]
				-\frac{1}{4} \int_{\mathbb R^3} \int_{\mathbb R^3} \int_{\mathbb S^2}
				 & B(\bm v - \bm v_*, \bm \sigma) \background(\bm v) \background(\bm v_*)                                                                                                                      \\
				 & \times \left[
					\varphi_N'\left(P_N\left(\frac{\acute f}{\acute {\background}}, \frac{\acute f_*}{\acute{\background}_*}\right)\right) - \varphi_N'\left(P_N\left(\frac{f}{\background}, \frac{f_*}{\background_*}\right)\right)
				\right]                                                                                                                                                                                        \\
				 & \times \left[ P_N\left( \frac{\acute f}{\acute{\background}}, \frac{\acute f_*}{\acute{\background}_*} \right) - P_N\left( \frac{f}{\background}, \frac{f_*}{\background_*} \right) \right]
				\, \dd \bm \sigma \, \dd \bm v_* \, \dd \bm v,
			\end{aligned}\label{eq:weak-dissipation}
		\end{align}
		where we have used the functional equation~\eqref{eq:functionalEquationPhi_N} in the second step.
		Since $\varphi_N$ is convex, its first derivative is monotone,
		\begin{equation}
			\big( \varphi_N'(z_2) - \varphi_N'(z_1) \big)(z_2 - z_1) \geq 0, \quad z_1, z_2 \in \mathbb R,
		\end{equation}
		so the integrand in~\eqref{eq:weak-dissipation} is nonnegative.
		Therefore,
		\[
			\int_{\mathbb R^3} \varphi_N'\left( \frac{f}{\background} \right) \, \mathcal C_N(f) \, \dd \bm v \leq 0.
		\]
		The integral vanishes if and only if the integrand in~\eqref{eq:weak-dissipation} is zero.
		Owing to $B > 0$ and $M > 0$ and the injectivity of $\varphi_N'$, this is equivalent to
		\begin{equation}
			\varphi_N'\left( \frac{\acute f}{\acute{\background}} \right)
			+ \varphi_N'\left( \frac{\acute f_*}{\acute{\background}_*} \right)
			=
			\varphi_N'\left( \frac{f}{\background} \right)
			+ \varphi_N'\left( \frac{f_*}{\background_*} \right).
		\end{equation}
		Hence, $\varphi_N'(f / M)$ must be a collision invariant.
	\end{proof}

	Finally, we prove the weak convergence of the approximate collision operators
	to Boltzmann's collision operator.

	\begin{proof}[Proof of \cref{thm:convergence-approximate-collision}]
		By \cref{lem:BoundP_N}, $\left| \background \background_* P_N( f/\background, f_*/\background_* ) \right| \leq \max\{\background, f \} \max\{\background_*, f_* \}$, so the integrand in $\int_{\mathbb R^3} \psi \, \mathcal C_N(f) \, \dd \bm v$ is bounded independently of $N$.
		Since
		\begin{equation}
			\lim_{N \to \infty} P_N(z_1, z_2) = z_1 z_2
		\end{equation}
		for all $z_1, z_2 > 0$ by \cref{lem:ConvergenceP_N},
		\begin{equation}
			\background \background_* P_N\left( \frac{f}{\background}, \frac{f_*}{\background_*} \right) \to f f_*, \quad N \to \infty,
		\end{equation}
		almost everywhere.
		The claim thus follows from the dominated convergence theorem.
	\end{proof}

	\section{Details on the Discontinuous Galerkin Finite Element Approximation}
	\label{sec:details-dgfem}
	If the background Maxwellian $M$ fulfills the imposed boundary condition,
	we can use the Darrozès--Guiraud inequality from \Cref{lem:Darrozes_Guiraud}
	to bound the entropy production at the boundary.

	\begin{proof}[Proof of \cref{them:entropy-stability}]
		Our strategy for the proof is as follows.
		First, we set $\bm w = \bm\lambda^{h,p}$ in the variational formulation~\eqref{eq:DGform}, define $g:=\bm\lambda^{h,p}\cdot\bm m$ and note that
		\begin{equation}\label{eq:Galduality}
			g= \varphi_N'\left( \frac{\beta_N(g)}{\background} \right).
		\end{equation}
		\Cref{thm:dissipation-approximate-collision} implies that
		\begin{equation}\label{eq:sest}
			s_N(\bm\lambda^{h,p}; \bm\lambda^{h,p}) \leq 0.
		\end{equation}
		Therefore, if we can prove that $a_x^{\Omega}(\bm\lambda^{h,p}; \bm\lambda^{h,p})+ a_x^{\partial\Omega}(\bm\lambda^{h,p}; \bm\lambda^{h,p})\geq 0$, then $a_t(g; g) \leq 0$, from which the assertion follows.

		To estimate $a^{\Omega}_x(\bm\lambda^{h,p},\bm\lambda^{h,p})$, we note that the positivity assumption on the dissipation flux in \eqref{eq:D>0} implies that
		\begin{equation}\label{eq:DGDineq}
			a^{\Omega}_x(\bm\lambda^{h,p},\bm\lambda^{h,p}) \geq
			\sum_n
			\sum_{e\in\mathcal{I}^h}
			\int\limits_{\mathrlap{I^n \times e \times \mathbb R^3 \times [0,1]}}
			\bm v \cdot \jump{g } \,
			\beta_N(\hat{g}(\zeta))
			\, \dd\zeta \, \dd\bm v \, \dd\bm s \, \dd t
			-
			\sum_n
			\sum_{\kappa\in\mathcal{T}^h}
			\int\limits_{\mathrlap{ I^n\times \kappa\times \mathbb R^3}}
			\beta_N(g)
			\,
			\bm v \cdot\partial_{\bm x}g
			\, \dd\bm v \, \dd\bm x \, \dd t.
		\end{equation}
		Using the chain rule in reverse, we rewrite the integrand of the volumetric integral in \eqref{eq:DGDineq}:
		\begin{equation}\label{eq:revchainr}
			\beta_N(g)
			\,
			\bm v \cdot\partial_{\bm x}g
			=
			\bm v \cdot\partial_{\bm x}M\varphi^*_N(g),
		\end{equation}
		where $M\varphi_N^*(\cdot)$ denotes the antiderivative of $\beta_N(\cdot)$ and corresponds to the convex conjugate of $\varphi_N(\cdot)$:
		\begin{equation}\label{eq:convxconj}
			\varphi^*_N(s) = \sup_z \{sz - \varphi_N(z)\}.
		\end{equation}
		Substituting \eqref{eq:revchainr} into \eqref{eq:DGDineq} and using the divergence theorem yields
		\begin{multline}\label{eq:DGDdiv}
			a^{\Omega}_x(\bm\lambda^{h,p},\bm\lambda^{h,p}) \geq
			\sum_n
			\sum_{e\in\mathcal{I}^h}
			\int\limits_{\mathrlap{I^n \times e \times \mathbb R^3 \times [0,1]}}
			\bm v \cdot \jump{g} \,
			\beta_N(\hat{g}(\zeta))
			\, \dd\zeta \, \dd\bm v \, \dd\bm s \, \dd t
			-
			\sum_n
			\sum_{e\in\mathcal{S}^h}
			\int\limits_{\mathrlap{I^n \times e \times \mathbb R^3}}
			\bm v \cdot \jump{\varphi^*_N(\bm \lambda^{h,p} \cdot \bm m)}
			\, \dd\bm v \, \dd\bm s \, \dd t
			\\
			=
			-
			\sum_n
			\sum_{e\in\mathcal{G}^h}
			\int\limits_{\mathrlap{I^n \times e \times \mathbb R^3}}
			(\bm v \cdot \bm n) \, M\varphi^*_N(\bm \lambda^{h,p} \cdot \bm m)
			\, \dd\bm v \, \dd\bm s \, \dd t,
		\end{multline}
		where the sum over $e\in\mathcal I$ vanishes upon using the fundamental theorem of calculus to express $\jump{\varphi^*_N(\bm\lambda^{h,p}\cdot\bm m)}$ as a line integral and noting from \eqref{eq:convxconj} that $(\varphi^*_N)'(\cdot)=\beta_N(\cdot)$.

		To show that $a_x^{\Omega}(\bm\lambda^{h,p},\bm\lambda^{h,p}) + a_x^{\partial\Omega}(\bm\lambda^{h,p},\bm\lambda^{h,p})\geq0$, we use the estimate in \eqref{eq:DGDdiv} to note that
		\begin{multline}\label{eq:axineq}
			a_x^{\Omega}(\bm\lambda^{h,p},\bm\lambda^{h,p}) + a_x^{\partial\Omega}(\bm\lambda^{h,p},\bm\lambda^{h,p})\geq
			\sum_n
			\sum_{e\in\mathcal{G}^h}
			\int\limits_{\mathrlap{I^n \times e \times \mathbb R^3_{\text{out}}}}
			\bm v \cdot \bm n\,g\beta_N(g)
			\, \dd\bm v \, \dd\bm s \, \dd t
			\\
			+
			\sum_n
			\sum_{e\in\mathcal{G}^h}
			\int\limits_{\mathrlap{I^n \times e \times \mathbb R^3_{\text{in}}}}
			(\bm v \cdot \bm n)\, g\beta_N(g_w)
			\, \dd\bm v \, \dd\bm s \, \dd t
			-
			\sum_n
			\sum_{e\in\mathcal{G}^h}
			\int\limits_{\mathrlap{I^n \times e \times \mathbb R^3}}
			\bm v \cdot \bm n \,M \varphi^*_N(g)
			\, \dd\bm v \, \dd\bm s \, \dd t.
		\end{multline}
		Expressing $\varphi_N^*(\cdot)$ in terms of its convex conjugate $\varphi_N(\cdot)$ using \eqref{eq:convxconj}, and decomposing the velocity integral over $\mathbb R^3$ into $\mathbb R^3_{\text{in}}$ and $\mathbb R^3_{\text{out}}$, we have
		\begin{equation}\label{eq:bndent}
			\int\limits_{\mathbb R^3}
			(\bm v \cdot \bm n) \, M \varphi^*_N(g)
			\, \dd\bm v
			=
			\left(
			\int_{\mathbb R^3_{\text{out}}}
			+
			\int_{\mathbb R^3_{\text{in}}}
			\right)
			\bm v \cdot \bm n\, \left(g\beta_N(g)
			- M \varphi_N\left( \frac{\beta_N(g)}{M} \right) \right)
			\, \dd\bm v.
		\end{equation}
		Substituting \eqref{eq:Galduality} and \eqref{eq:bndent} into \eqref{eq:axineq} yields
		\begin{multline}\label{eq:axconvxconj}
			a_x^{\Omega}(\bm\lambda^{h,p},\bm\lambda^{h,p}) + a_x^{\partial\Omega}(\bm\lambda^{h,p},\bm\lambda^{h,p})
			\geq
			\\
			\sum_n
			\sum_{e\in\mathcal{G}^h}
			\int\limits_{\mathrlap{I^n \times e \times \mathbb R^3_{\text{in}}}}
			\bm v \cdot \bm n\, \left(
			\varphi_N'\left(\frac{\beta_N(g)}{M}\right)(\beta_N(g_w)-\beta_N(g))
			+
			M\varphi_N\left(\frac{\beta_N(g)}{M}\right)
			-
			M\varphi_N\left(\frac{\beta_N(g_w)}{M}\right)
			\right)
			\, \dd\bm v \, \dd\bm s \, \dd t
			\\
			+
			\sum_n
			\sum_{e\in\mathcal{G}^h}
			\left(
			\int\limits_{\mathrlap{I^n \times e \times \mathbb R^3_{\text{out}}}}
			\bm v \cdot \bm n \, M\varphi_N\left(\frac{\beta_N(g)}{M}\right)
			\, \dd\bm v \, \dd\bm s \, \dd t
			+
			\int\limits_{\mathrlap{I^n \times e \times \mathbb R^3_{\text{in}}}}
			\bm v \cdot \bm n \, M\varphi_N\left(\frac{\beta_N(g_w)}{M}\right)
			\, \dd\bm v \, \dd\bm s \, \dd t
			\right) \geq 0,
		\end{multline}
		where the ultimate inequality follows by the convexity of $\varphi_N$
		and the Darrozès--Guiraud inequality from \Cref{lem:Darrozes_Guiraud}.

		To prove the monotonicity of the total entropy, we substitute \eqref{eq:sest} into \eqref{eq:DGFEform} to obtain
		\[
			0\geq s(\bm\lambda^{h,p}; \bm\lambda^{h,p}) - a_{x}^{\Omega}(\bm\lambda^{h,p}; \bm\lambda^{h,p}) - a_{x}^{\partial\Omega}(\bm\lambda^{h,p}; \bm\lambda^{h,p})
			= a_t(\bm\lambda^{h,p}; \bm\lambda^{h,p}).
		\]
		Integrating $a_t$ by parts in $t$, substituting \eqref{eq:Galduality}, applying the reverse chain rule and the divergence theorem on the $I^n$-integral, and adding and subtracting $M\varphi_N(\beta_N(g(t_-^n))/M)$ gives
		\begin{multline}\label{eq:atslabs}
			0 \geq  a_t(\bm\lambda^{h,p}; \bm\lambda^{h,p}) =
			\\
			\sum_n\sum_{\kappa \in \mathcal{T}^h}
			\int\limits_{\mathrlap{\kappa \times \mathbb R^3}}
			\varphi_N'\left(\frac{\beta_N(g(t_+^n))}{M}\right)
			\left(
			\beta_N(g(t_+^{n}))
			-
			\beta_N(g(t_-^{n}))
			\right)
			-
			\background \varphi_N \left(
			\frac{\beta_N(g(t_+^{n}))}{\background}
			\right)
			+
			\background \varphi_N \left(
			\frac{\beta_N(g(t_-^{n}))}{\background}
			\right)
			\, \dd\bm v \, \dd\bm x
			\\
			+
			\sum_n\sum_{\kappa \in \mathcal{T}^h}
			\int\limits_{\mathrlap{\kappa \times \mathbb R^3}}
			\background \varphi_N \left(
			\frac{\beta_N(g(t_-^{n+1}))}{\background}
			\right)
			-
			\background \varphi_N \left(
			\frac{\beta_N(g(t_-^{n}))}{\background}
			\right)
			\, \dd\bm v \, \dd\bm x,
		\end{multline}
		The first summand is nonnegative by the convexity of $\background \varphi_N(\cdot / \background)$; the remaining sum telescopes over the time slabs, yielding \eqref{eq:entstab}, from which the assertion follows.
	\end{proof}

	\section{Details on Mass Flow Through a Channel}
	\label{sec:analytical-mass-flow}
	In this section, we provide the full derivation of the reduced model for the mass flow problem.
	To that end, we seek the distribution in the form
	\[
		f(\bm x, \bm v) = \frac{p(x_3)}{p_0} M_{\rho_0, \bm 0, R_s \theta_0}(\bm v) g(x_1, x_2, \bm v),
	\]
	with an unknown pressure profile $p$, a function $g$ that does not depend on the direction of the flow,
	and the reference density $\rho_0$, temperature $\theta_0$ and pressure $p_0$,
	where $\rho_0 = p_0 / (R_s \theta_0)$.
	Here, $R_s$ denotes the specific gas constant,
	\[
		R_s = \frac{k_B}{m},
	\]
	with the Boltzmann constant $k_B$ and the molecular mass of the gas $m$.
	Inserting this ansatz into the stationary Boltzmann equation yields
	\[
		M_{\rho_0, \bm 0, R_s \theta_0} \big(v_1 \partial_{x_1} g + v_2 \partial_{x_2} g \big)
		+ M_{\rho_0, \bm 0, R_s \theta_0} v_3 \partial_{x_3} \log(p(x_3) / p_0) g
		= \frac{p(x_3)}{p_0} \mathcal C\big( M_{\rho_0, \bm 0, R_s \theta_0} g \big).
	\]
	The pressure term along the channel is nonlinear. However, for the channel geometry $D \ll L$ the nondimensional pressure gradient $X_p = (D/L)(p_1 - p_2)/p_0$ is small, so $p$ stays close to the constant profile $p_0$ and, by the choice of Maxwellian in the ansatz, $g$ stays close to unity. Writing $p = p_0(1 + X_p \varpi)$ and $g = 1 + X_p h$ and expanding to first order in $X_p$, the advection term linearizes, replacing the nonlinear term $\partial_{x_3}\log(p/p_0)\,g$ above by $\partial_{x_3}\varpi$, while we keep the collision operator in its full nonlinear form: our numerical method is designed for the full operator, and since constant functions belong to the discrete trial space, the nonlinear operator already contains its linearization around $M_{\rho_0, \bm 0, R_s \theta_0}$. This partially linearized equation is consistent to first order in $X_p$.

	To fix the pressure-gradient term, we test this equation with $v_3$ and integrate over $\Omega \times \mathbb R^3$: the collision term vanishes by momentum conservation and the advection term integrates to a constant, so $\varpi$ is linear with slope $\partial_{x_3}\varpi = -(p_1 - p_2)/(p_0 L)$ determined by the inlet and outlet pressures. Since $p_1$ and $p_2$ are related by a fixed ratio $p_1/p_2 = q$, the pressure profile has a single degree of freedom, so a constant test function along $x_3$ suffices; integrating along $x_3$ with the midpoint rule replaces $p$ by its midpoint value $p_0 = (p_1 + p_2)/2$, a quadrature whose error is of the same order $O(X_p)$ as the advection linearization. The final equation for $g$ then reads
	\begin{equation}\label{eq:be-channel}
		M_{\rho_0, \bm 0, R_s \theta_0} \big( v_1 \partial_{x_1} g + v_2 \partial_{x_2} g \big)
		- M_{\rho_0, \bm 0, R_s \theta_0} v_3 \frac{1}{L} \frac{p_1 - p_2}{p_0}
		= \mathcal C\big( M_{\rho_0, \bm 0, R_s \theta_0} g \big).
	\end{equation}
	For its nondimensional formulation we set
	\[
		\tilde {\bm x} = \bm x / D, \quad \tilde {\bm v} = \bm v / v_0,
		\quad g(x_1, x_2, \bm v) = \tilde g(x_1 / D, x_2 / D, \bm v/ \sqrt{R_s \theta_0}),
	\]
	with $v_0 = \sqrt{R_s \theta_0}$.
	The resulting equation for $\tilde g$ is
	\begin{equation}\label{eq:be--channel-nondim}
		M_{1, \bm 0, 1} \big(\tilde v_1 \partial_{\tilde x_1} \tilde g
		+ \tilde v_2 \partial_{\tilde x_2} \tilde g \big)
		- X_p M_{1, \bm 0, 1} \tilde v_3 = \frac{1}{\kn} \tilde {\mathcal C}(M_{1, \bm 0, 1} g),
	\end{equation}
	with the Knudsen number
	\begin{equation}
		\label{eq:kn-channel}
		\kn = \sqrt{\frac{\pi}{2}} \frac{1}{D} \frac{\mu}{p_0} \sqrt{R_s \theta_0},
	\end{equation}
	where $\mu$ is the dynamic viscosity of the gas at temperature $\theta_0$,
	and $\tilde {\mathcal C}$ is the nondimensionalized collision operator,
	\[
		\tilde {\mathcal C}(g)(\bm v) = \frac{1}{4 \sqrt{2} \pi} \int_{\mathbb R^3} \int_{\mathbb S^2}
		|\bm v - \bm v_*| \big( g(\acute{\bm v}_*) g(\acute{\bm v}) - g(\bm v_*) g(\bm v) \big)
		\, \dd \bm \sigma
		\, \dd \bm v_*.
	\]
	Our quantity of interest, the mass flow rate through the channel, is computed as
	\[
		m_\text{flx} = \int_\Omega \int_{\mathbb R^3} v_3 f(x_1, x_2, x_3, \bm v)
		\, \dd \bm v \, \dd(x_1, x_2),
	\]
	where, by conservation of mass, the right-hand side is independent of $x_3$.
	To first order in $X_p$, the integral evaluates to
	\[
		\begin{multlined}[t]
			\int_\Omega \int_{\mathbb R^3} v_3 f(x_1, x_2, x_3, \bm v)
			\, \dd \bm v \, \dd(x_1, x_2) \\
			\approx
			\int_\Omega \int_{\mathbb R^3} v_3 M_{\rho_0, \bm 0, R_s \theta_0}(\bm v)
			\, \dd \bm v \, \dd(x_1, x_2)
			+
			X_p \int_\Omega \int_{\mathbb R^3} v_3 \big( \varpi(x_3) + h(x_1, x_2, \bm v) \big)
			M_{\rho_0, \bm 0, R_s \theta_0}(\bm v)
			\, \dd \bm v \, \dd(x_1, x_2).
		\end{multlined}
	\]
	Since the Maxwellian is centered around $\bm 0$, all but one integral vanish,
	and we obtain
	\[
		m_\text{flx} \approx X_p \int_\Omega \int_{\mathbb R^3} v_3 h(x_1, x_2, \bm v)
		M_{\rho_0, \bm 0, R_s \theta_0}(\bm v)
		\, \dd \bm v \, \dd(x_1, x_2).
	\]
	The nondimensional form of the integral reads
	\begin{equation}
		\label{eq:mflux-G-relation}
		m_\text{flx} = X_p \rho_0 v_0 |\Omega| \frac{1}{|\tilde \Omega|} \int_{\tilde \Omega} \int_{\mathbb R^3}
		\tilde v_3 M_{1, \bm 0, 1}(\tilde {\bm v}) \tilde h(\tilde x_1, \tilde x_2, \tilde {\bm v})
		\, \dd \tilde {\bm v} \, \dd(\tilde x_1, \tilde x_2)
		= X_p |\Omega| \rho_0 v_0 G,
	\end{equation}
	with the nondimensional domain $\tilde \Omega$ and the normalized mass flow rate
	\[
		G = \frac{1}{|\tilde \Omega|} \int_{\tilde \Omega} \int_{\mathbb R^3}
		\tilde v_3 M_{1, \bm 0, 1}(\tilde {\bm v}) \tilde h(\tilde x_1, \tilde x_2, \tilde {\bm v})
		\, \dd \tilde {\bm v} \, \dd(\tilde x_1, \tilde x_2).
	\]
	In terms of $\tilde g$, the normalized mass flow rate $G$ reads
	\begin{equation}\label{eq:G-full}
		G = \frac{1}{X_p} \frac{1}{|\tilde \Omega|} \int_{\tilde \Omega} \int_{\mathbb R^3}
		\tilde v_3 M_{1, \bm 0, 1}(\tilde {\bm v}) \tilde g(\tilde x_1, \tilde x_2, \tilde {\bm v})
		\, \dd \tilde {\bm v} \, \dd(\tilde x_1, \tilde x_2).
	\end{equation}
	In the next section, we provide the details on the analytical solution in the collisionless limit.

	\subsection{Analytical Results for the Distribution Function}
	We begin with the derivation of the closed-form expression for the distribution function.
	It exhibits a singularity for small velocities.
	The shape of the distribution function is determined by the two competing
	mechanisms along the pipe, namely the constant pressure gradient, which imposes a nontrivial velocity profile,
	and the diffuse boundary condition that forces the velocity to be zero.
	As our analysis shows, the distribution function depends on the time it takes a single particle
	to travel from a position $(x_1, x_2)$ in the perpendicular plane
	to the boundary.
	For particles of speed close to zero, this can take arbitrarily long,
	thus causing a singularity in the distribution function.
	Nonetheless, the mass flow rate remains finite,
	as the density of such particles in a three-dimensional velocity space
	decays faster than the time to reach the boundary increases.

	\begin{proof}[Proof of \cref{them:pdf-collisionless}]
		Following \eqref{eq:be--channel-nondim}, $g$ in the collisionless limit satisfies
		\[
			M_{1, \bm 0, 1} \big( v_1 \partial_{x_1} g + v_2 \partial_{x_2} g \big) = X_p M_{1, \bm 0, 1} v_3,
		\]
		subject to diffuse reflection boundary condition on $\partial \Omega$ with
		unit temperature and zero velocity.
		With
		\[
			g = 1 + X_p v_3 h,
		\]
		the Boltzmann equation simplifies to
		\[
			v_1 \partial_{x_1} h + v_2 \partial_{x_2} h = 1,
		\]
		with $h = 0$ on the inflow boundary.

		Since $\Omega$ is convex, the characteristic curve
		\[
			X(s) = \bm x^\perp + s \bm v^\perp, \quad s \geq 0,
		\]
		with $\bm v^\perp = (v_1, v_2)$, $\bm x^\perp = (x_1, x_2)$,
		intersects the boundary $\partial \Omega$ at a unique parameter $\tau_\Omega(\bm x^\perp, \bm v^\perp)$.
		Since at the intersection point, the inner product of $\bm v^\perp$ and the normal $\bm n$
		is nonnegative, the parameter for the intersection with the inflow boundary is $\tau_\Omega(\bm x^\perp, - \bm v^\perp)$.
		The function $w(s) = h(X(s), \bm v)$ has $w'(s) = v_1 \partial_{x_1} h + v_2 \partial_{x_2} h = 1$ and $w(\tau_\Omega(\bm x^\perp, - \bm v^\perp)) = 0$, hence $h(\bm x, \bm v) = w(0) = \tau_\Omega(\bm x^\perp, - \bm v^\perp)$ and consequently
		\[
			g(\bm x, \bm v) = 1 + X_p v_3 \tau_\Omega(\bm x^\perp, - \bm v^\perp),
		\]
		thus proving the asserted form of the solution.
	\end{proof}

	In the following, we assume that $\Omega$ is centered around $\bm 0$.
	The travel time $\tau_\Omega$ is homogeneous of degree $-1$ in its second argument, so that it is singular at $\bm v^\perp = \bm 0$. Since $\Omega$ is convex, $\partial(R \Omega) = R \partial \Omega$ for $R > 0$, so $\tau_\Omega$ also scales as $\tau_{R \Omega}(R \bm x^\perp, \bm v^\perp) = R \tau_\Omega(\bm x^\perp, \bm v^\perp)$; that is,
	\[
		\tau_\Omega(\bm x^\perp, \lambda \bm v^\perp)
		= \frac{1}{\lambda} \tau_\Omega(\bm x^\perp, \bm v^\perp), \quad \lambda > 0,
		\qquad
		\tau_{R \Omega}(R \bm x^\perp, R \bm v^\perp) = \tau_{U \Omega}(U \bm x^\perp, U \bm v^\perp) = \tau_\Omega(\bm x^\perp, \bm v^\perp),
	\]
	the combined identities holding for any $R > 0$ and any rotation $U \in \mathbb R^{2 \times 2}$.
	With~\eqref{eq:G-full} we compute the normalized mass flow rate,
	\[
		G = \frac{1}{X_p} \frac{1}{|\Omega|} \int_\Omega \int_{\mathbb R^3} v_3 M_{1, \bm 0, 1}(\bm v) g(\bm x, \bm v)
		\, \dd \bm v \, \dd \bm x
		= \frac{1}{|\Omega|} \int_\Omega \int_{\mathbb R^3} M_{1, \bm 0, 1}(\bm v) v_3^2 \tau_\Omega(\bm x, - \bm v^\perp)
		\, \dd \bm v \, \dd \bm x.
	\]
	Using the homogeneity of $\tau_\Omega$ and the fact that it is independent of $v_3$, the velocity integral simplifies,
	\[
		\begin{multlined}[t]
			\int_{\mathbb R^3} \frac{1}{\sqrt{2 \pi}^3} \exp(-|\bm v|^2 / 2) v_3^2 \tau_\Omega(\bm x, -\bm v^\perp) \, \dd \bm v \\
			= \int_{-\infty}^\infty \frac{v_3^2}{\sqrt{2 \pi}} \exp(-v_3^2 / 2) \, \dd v_3
			\int_0^\infty \frac{r}{2 \pi} \exp(-r^2 / 2) \frac{1}{r} \, \dd r
			\int_0^{2 \pi} \tau_\Omega\big(\bm x, -(\cos \varphi, \sin \varphi)\big) \, \dd \varphi.
		\end{multlined}
	\]
	The integrals over the Maxwellian evaluate to $1 / (2 \sqrt{2 \pi})$, so that
	\begin{equation}\label{eq:G-reduced}
		G = \frac{1}{2 \sqrt{2 \pi}} \frac{1}{|\Omega|} \int_\Omega \int_0^{2 \pi}
		\tau_\Omega\big(\bm z, -(\cos \varphi, \sin \varphi) \big) \, \dd \varphi \, \dd \bm z.
	\end{equation}
	For $\Omega = D_R(\bm 0)$, a disc of radius $R > 0$, the time to the wall starting at $\bm x^\perp$ with velocity $\bm v^\perp$
	is
	\[
		\tau_\Omega(\bm x^\perp, \bm v^\perp) = \frac{1}{|\bm v^\perp|} \Bigg(
		- \bm x^\perp \cdot \frac{\bm v^\perp}{|\bm v^\perp|} + \sqrt{\Big(\bm x^\perp \cdot \frac{\bm v^\perp}{|\bm v^\perp|}\Big)^2 + R^2 - |\bm x^\perp|^2}
		\Bigg).
	\]
	Note that this function has an integrable singularity at $\bm v^\perp = \bm 0$.
	The reduced integral~\eqref{eq:G-reduced} for the normalized mass flux scales linearly with the radius $R$.
	Nondimensionalizing lengths by the channel diameter $D = 2 R$, i.e., setting $R = 1/2$,
	consistent with the coordinates $x_i / D$ used throughout the numerical section, the reduced integral evaluates to $8/3$; hence
	\[
		G = \frac{1}{2 \sqrt{2 \pi}} \cdot \frac{8}{3} = \frac{2 \sqrt{2}}{3 \sqrt{\pi}} \approx 0.531923.
	\]
	The value for the normalized mass flow in the collisionless case in \cref{thm:mass-flow-rate-collisionless}
	was already derived by Knudsen~\cite{knudsen1909} via geometric arguments.
	His cross-sectional mass-flow formula on page~107 of~\cite{knudsen1909}, expressed through
$G = m_\text{flx} / (X_p |\Omega| \rho_0 v_0)$ with normalized pressure gradient
$X_p = 2R(p_1 - p_2)/(p_0 L)$ and reference relation $p_0 = \rho_0 v_0^2$ and then evaluated for a
	circular cross section ($|\Omega|/|\partial\Omega| = R/2$), reduces to the same value
$G = 2 \sqrt{2} / (3 \sqrt{\pi})$, in agreement with \cref{thm:mass-flow-rate-collisionless}.

	\subsection{Post Processing of the Numerical Results}
	\label{sec:post-processing-mass-flow}
	For our simulations of the mass flow problem we fix the ratio $q$ between the
	inlet pressure $p_1$ and the outlet pressure $p_2$,
	\[
		q = 5.
	\]
	Then, we vary $p_1$ from near vacuum to near continuum pressures
	and compute the normalized mass flow rate $G$ in each case.
	For the collisionless limit $p_1 \to 0$, we use our analytical result for $G$.
	For post-processing and visualization, we use the nondimensional rarefaction parameter,
	\[
		\delta = \frac{\sqrt{\pi}}{2} \kn^{-1} = \frac{D}{\sqrt{2 R_s \theta_0}} \frac{p_0}{\mu}
		= \frac{D}{\sqrt{2 R_s \theta_0}} \frac{1 + q}{2 \mu} p_1,
	\]
	which is proportional to the inlet pressure.
	Owing to the wide range of rarefaction parameters involved,
	we present the results on a logarithmic scale,
	\[
		\tilde \delta = \log(1 + \delta), \quad \tilde G = \log G.
	\]
	\begin{figure}
		\centering
		\begin{tikzpicture}
			\begin{axis}[
					xlabel = $\tilde \delta$,
					ylabel = $\tilde G$,
					legend pos = north west,
				]
				\addplot+[only marks] table {data/G-raw-log-mom.dat};
				\addlegendentry{Numerical data}
				\addplot+[domain=0:5, no marks] {-0.288547 - 0.252736 * x + 0.211029 * x^2 - 0.0114248 * x^3};
				\addlegendentry{Cubic Least Squares Fit}
			\end{axis}
		\end{tikzpicture}
		\caption{Numerical results of normalized mass flow rates in rescaled coordinates
			with least squares fit $\tilde G(\tilde \delta)$.}
		\label{fig:raw-G-log}
	\end{figure}
	\Cref{fig:raw-G-log} displays the rescaled numerical results
	together with a least squares fit of a cubic polynomial.
	The polynomial approximates our results very well over the full range
	of simulated rarefaction parameters.
	The normalized mass flow rate in the full range is computed by
	\[
		G(\delta) = \exp\bigg( \tilde G\big( \log(1 + \delta) \big) \bigg),
		\quad \delta \geq 0,
	\]
	where $\tilde G$ denotes our cubic least squares fit of the data in \cref{fig:raw-G-log},
	\[
		\tilde G(\tilde \delta) = -0.63488 - 0.253103 \tilde \delta + 0.211179 \tilde \delta^2 - 0.0114429 \tilde \delta^3,
	\]
	with residual $1 - R^2 = \num{5.64e-5}$.
\bibliography{book}

\end{document}